\documentclass{article}
\usepackage{fullpage}
\usepackage{amsthm}
\usepackage{amsmath}
\usepackage{amssymb}
\usepackage{authblk}
\usepackage{t1enc}
\usepackage[utf8]{inputenc}
\usepackage{colonequals}
\usepackage{float}
\usepackage{tikz}
\usetikzlibrary{calc,arrows.meta, decorations.shapes, shapes.geometric}
\usetikzlibrary{shapes,positioning,decorations.markings}
\usepackage{booktabs}
\usepackage{array}
\usepackage{multicol}
\usepackage{multirow}
\usepackage{caption}
\usepackage{cite}
\usepackage[nobysame]{amsrefs}
\usepackage{enumerate}
\usepackage{enumitem}
\usepackage[linesnumbered,ruled,vlined]{algorithm2e}
\SetArgSty{upshape}

\SetCommentSty{mycommfont}

\usepackage{titling}
\thanksmarkseries{arabic}

\usepackage[unicode,colorlinks=true,citecolor=green!40!black,linkcolor=red!20!black,urlcolor=blue!40!black,filecolor=cyan!30!black]{hyperref}

\makeatletter
 \newcommand{\linkdest}[1]{\Hy@raisedlink{\hypertarget{#1}{}}}
\makeatother

\usepackage[titles]{tocloft}
\renewcommand{\eprint}[1]{#1}
\renewcommand{\PrintYear}[1]{#1.}
\DefineSimpleKey{bib}{volumeadditionalinformation}
\DefineSimpleKey{bib}{issue}
\DefineSimpleKey{bib}{articlenumber}
\DefineSimpleKey{bib}{isbnnumber}

\BibSpec{article}{
 +{}  {\PrintAuthors}          {author}
 +{,} { \textit}               {title}
 +{,} { }                      {journal}
 +{}  { \textbf}               {volume}
 +{}  { \parenthesize}         {volumeadditionalinformation}
 +{}  {\parenthesize}          {issue}
 +{:} { }                      {pages}
 +{,} { article no.~}          {articlenumber}
 +{,} { \PrintYear}            {date}
 +{}  { Available at \eprint}  {eprint}
 +{}  { \PrintDOI}             {doi}
}

\BibSpec{inproceedings}{
 +{}  {\PrintAuthors}          {author}
 +{,} { \textit}               {title}
 +{.} { In \textit}            {booktitle}
 +{,} { }                      {series}
 +{,} { volume }               {volume}
 +{,} { pp.~}                  {pages}
 +{,} { article no.~}          {articlenumber}
 +{.} { }                      {publisher}
 +{,} { }                      {address}
 +{,} { \PrintYear}            {date}
 +{}  { \PrintDOI}             {doi}
}

\BibSpec{techreport}{
 +{}  {\PrintAuthors}          {author}
 +{,} { \textit}               {title}
 +{.} { }                      {series}
 +{,} { Technical Report No. } {number}
 +{.} { }                      {institution}
 +{,} { \PrintYear}            {date}
 +{}  { \PrintDOI}             {doi}
}

\BibSpec{book}{
 +{}  {\PrintAuthors}          {author}
 +{,} { \textit}               {title}
 +{,} { }                      {publisher}
 +{,} { }                      {address}
 +{,} { \PrintYear}            {date}
 +{}  { ISBN }                 {isbnnumber}
}

\allowdisplaybreaks

\theoremstyle{definition}
\newtheorem{theorem}{Theorem}[section]
\newtheorem{definition}[theorem]{Definition}
\newtheorem{lemma}[theorem]{Lemma}

\newtheorem{corollary}[theorem]{Corollary}

\title{\vspace{-50pt} Complexity of finding color-avoiding connected components}
\author{Panna T\'{i}mea Fekete\thanks{HUN-REN Alfr\'{e}d R\'{e}nyi Institute of Mathematics} \textsuperscript{,}\thanks{Department of Computer Science and Information Theory, Faculty of Electrical Engineering and Informatics, Budapest University of Technology and Economics} \and Roland Molontay\thanks{Department of Stochastics, Institute of Mathematics, Budapest University of Technology and Economics} \and Kitti Varga\protect\footnotemark[2]}

\begin{document}

\maketitle

\begin{abstract}
 Two vertices of a (not necessarily properly) arc-colored digraph are called arc-$\ell$-color-avoiding strongly $k$-arc-con\-nected if, after the removal of the arcs of any at most $\ell$ colors, they remain strongly $k$-arc-connected. The arc-$\ell$-color-avoiding strongly $k$-arc-connected components are maximal sets of vertices such that any two of those vertices belonging to the set are arc-$\ell$-color-avoiding strongly $k$-arc-connected in the graph.
 
 We investigate the problems of finding the arc-$\ell$-color-avoiding strongly $k$-arc-connected components and the maximal arc-$\ell$-color-avoiding strongly $k$-arc-connected subgraphs of an arc-colored digraph. We also consider possible generalizations to $k$-vertex-connectivity and to vertex-colored digraphs. We show that despite the seemingly minor differences in definitions, the complexity of these problems varies significantly: some can be solved in polynomial time, while others are NP-hard. It also turns out that in some cases the problem is NP-hard when multiple colors can be assigned to each vertex, but the problem becomes polynomial-time solvable when each vertex is single-colored.
\end{abstract}

\small
\textbf{Keywords:} color-avoiding connectivity, strong connectivity, connected components, computational complexity

\textbf{Mathematics Subject Classifications:} 05C40, 05C85,
68Q17, 05C20

\tableofcontents

\section{Introduction}

The robustness of networks against random errors and targeted attacks has been extensively studied in the past decades~\cites{albert2000error, posfai2016network, callaway2000network}. Understanding the attack and error tolerance of complex networks -- i.e., the ability to maintain the overall connectivity of the network as some vertices or edges are removed -- is a fundamental problem in network science, with applications in infrastructure resilience, communication security, and epidemic containment~\cites{newman2018networks, yuan2016k, callaway2000network}.

Various theoretical frameworks have been developed to analyze network robustness. One of the most widely used approaches is percolation theory, which studies the behavior of connected components when vertices or edges fail with a given probability~\cites{li2021percolation, cohen2021percolation}. Traditionally, percolation models assume that failures occur randomly (classical percolation) or follow a preferential removal strategy (targeted attacks) based on structural properties such as node degree or centrality measures~\cites{albert2000error, yuan2016k}. Several extensions of percolation have been proposed, including $k$-core percolation~\cite{dorogovtsev2006k}, bootstrap percolation~\cite{baxter2010bootstrap}, $l$-hop percolation~\cite{shang2011hop}, and percolation on multiplex networks~\cites{hackett2016bond, mahabadi2021network, son2012percolation, osat2017optimal}.

A significant limitation of standard network percolation models is that they treat failures as independent events, whereas in real-world systems, certain parts of the network may share common vulnerabilities. A notable example is transportation networks, where different transportation modes (rail, road, air, etc.) may be affected by distinct external factors. For instance, extreme weather conditions can disrupt railway transportation while leaving underground transport unaffected. Another example is communication networks, where routers within the same country may be compromised together, requiring an alternative routing strategy that avoids any single country's infrastructure.

While in the example of transportation networks, attacks target the edges; in the case of routers, they affect the vertices. In the latter scenario, we can consider two natural variations depending on how we define being compromised: whether the affected routers are entirely removed from the network and the question is how the remaining routers can still communicate; or whether routers registered to the same country can eavesdrop collectively on messages and the challenge is to ensure secure communication between two fixed routers despite potential interception.

To address such scenarios, Krause et al.~\cites{krause2016hidden, krause2017color, kadovic2018bond} introduced a novel framework called color-avoiding percolation. In this model, each vertex or edge is assigned a color representing a shared vulnerability. Two vertices are color-avoiding connected if they can reach each other via at least one path that avoids any single color. Color-avoiding percolation has received substantial research attention recently, both from the physics and mathematics communities~\cites{shekhtman2018critical, giusfredi2019self, giusfredi2020color, fekete2024color, lichev2024color, lichev2025color}.

Earlier, edge-color-avoiding colorings were studied under the name courteous edge-colorings by DeVos et al.~\cite{devos2006cut}. In particular, graphs that admit a 1-courteous edge-coloring are precisely the edge-color-avoiding connected graphs. That work established interesting upper bounds on the number of colors required to courteously color a given graph.
Molontay and Varga~\cite{molontay2019complexity} investigated the computational complexity of finding the so-called color-avoiding connected components of a graph. This concept was defined by Krause et al.~\cite{krause2016hidden} as maximal sets of vertices such that any two vertices belonging to the set are color-avoiding connected in the graph. Pintér and Varga~\cite{pinter2024color} studied the problem of finding color-avoiding connected spanning subgraphs with the minimum number of edges in edge- or vertex-colored graphs. They proved these problems to be NP-hard, proposed a polynomial-time approximation algorithm, and further extended the concept of edge-color-avoiding connectivity to matroids.

A wide variety of related problems have been studied across different research communities under different terminology. In network design, related formulations include shared-risk link and resource groups~\cites{cdp07,coudert16}, while the theoretical computer science literature has considered label-cut and colored-cut problems~\cites{labelcut,morawietz22}. Xu and Faragó~\cite{xu2019landscape} survey these overlapping formulations and highlight how closely related problems have often been rediscovered independently in different communities.

A particularly relevant formulation is the hedge cut problem, introduced by Ghaffari et al.~\cite{ghaffari2017random}, in which the objective is to determine the minimum number of colors whose removal disconnects a (not necessarily properly) edge-colored graph. The global version of this problem is also known as hedge connectivity. Ghaffari et al. gave a polynomial-time approximation scheme and a quasi-polynomial-time exact algorithm based on random sampling and contractions. Jaffke et al.~\cite{jaffke2023tight} subsequently established that the quasi-polynomial running time is essentially tight under the Randomized Exponential Time Hypothesis, while Fomin et al.~\cite{fomin2025fixed} showed that the problem is fixed-parameter tractable when parameterized by the number of removable colors. Related lines of research include color-fault-tolerant spanners and connectivity labeling schemes, which construct auxiliary structures that preserve connectivity under color failures~\cites{petruschka-spanners,petruschka-labeling}.

Typically, $k$-edge- or $k$-vertex-connected components are defined as maximal $k$-edge- or $k$-vertex-con\-nect\-ed subgraphs. However, this definition does not necessarily coincide with the concept of maximal sets of vertices such that any two vertices belonging to the set are $k$-edge- or $k$-vertex-connected in the graph, for arbitrary $k \in \mathbb{Z}_+$. The computational and algorithmic aspects of finding maximal $k$-edge- or $k$-vertex-connected subgraphs have been studied extensively; see, for example~\cites{holberg1992decomposition, li2017effective}. The problem of finding maximal sets of vertices such that any two vertices belonging to the set are $k$-edge-connected in the graph has also been investigated~\cites{wang2015simple}.

In this paper, we address these problems within the color-avoiding framework. 
We explore the concept of color-avoidance both for arc- and vertex-colored digraphs, including two variants for the latter, motivated by the real-world scenarios described above.
Our approach also considers the case in which multiple color classes are attacked simultaneously, and we require higher strong arc- or vertex-connectivity. We build upon the results of Molontay and Varga~\cite{molontay2019complexity} which settled the case $k = \ell = 1$ for finding the color-avoiding connected components, but here we use revised terminology.

The rest of the paper is organized as follows. In Section~\ref{section:preliminaries}, we collect some important definitions and notation. In Section~\ref{section:Alk}, we study the problems of finding color-avoiding strongly connected components and maximal color-avoiding strongly connected subgraphs in arc-colored digraphs. Sections~\ref{section:Vlk} and~\ref{section:IVlk} examine these problems for the two variants of color-avoidance in vertex-colored digraphs.

\bigskip

\section{Preliminaries} \label{section:preliminaries}

The set of positive integers is denoted by $\mathbb{Z}_+$. For any $n \in \mathbb{Z}_+$, let $[n] \colonequals \{ 1, \ldots, n \}$, and let $K_n$ denote the complete graph on $n$ vertices.

The \emph{complement} of a simple graph $G = (V, E)$ is denoted by $\overline{G}$. The subgraph of $G$ induced by some $V' \subseteq V$ is denoted by $G[V']$. For any graph $G$ and for any positive integer $p$, let $pG$ denote the disjoint union of $p$ copies of $G$. For any graphs $G$ and $H$, we say that the graph $G$ is \emph{$H$-free} if it does not contain $H$ as an induced subgraph.

A directed graph and a directed path are simply called a \emph{digraph} and a \emph{dipath}, respectively. A directed edge is called an \emph{arc}. Let $G = (V, A)$ be a digraph. For a set $A' \subseteq A$ of arcs or for a set $V' \subseteq V$ of vertices, we denote by $G-A'$ or by $G-V'$ the graph that is obtained by deleting the arcs of $A'$ or the vertices of $V'$ from $G$, respectively. 

We say that two vertices $u$ and $v$ of a digraph $G$ are \emph{strongly $k$-arc-connected} if there exist at least $k$ pairwise arc-disjoint $u$-$v$ dipaths, and at least $k$ pairwise arc-disjoint $v$-$u$ dipaths in $G$. Similarly, we say that $u$ and $v$ are \emph{strongly $k$-vertex-connected} if there exist at least $k$ pairwise internally vertex-disjoint $u$-$v$ dipaths, and at least $k$ pairwise internally vertex-disjoint $v$-$u$ dipaths in $G$.

A digraph is called \emph{strongly $k$-arc-connected} (or \emph{strongly $k$-vertex-connected}) if it has either at least two vertices and there are at least $k$ pairwise arc-disjoint dipaths (or at least $k$ pairwise internally vertex-disjoint dipaths) from any vertex to any other vertex, or if it has one vertex with at least $k$ loops on it (where $k \in \mathbb{Z}_+$).\footnote{Let us point out that the usual way to define $k$-vertex connectivity is to require the graph to have at least $k+1$ vertices and to remain connected whenever fewer than $k$ vertices are removed. The reason for the condition on the number of vertices in this definition is to be able to define the \emph{connectivity number} (that is, the maximum value of $k$ for which the graph is $k$-vertex-connected) of complete graphs. Note that with both versions of the definition, the complete graph $K_n$ is $(n-1)$-vertex-connected but not $n$-vertex-connected. However, with the usual definition, a 2-vertex graph with $k \ge 2$ parallel edges is not $k$-vertex-connected; whereas in this article, we consider it to be $k$-vertex-connected.}

In this paper, we study a generalized concept of connectivity to (not necessarily properly) colored graphs\footnote{A \emph{proper edge-coloring} (or \emph{proper vertex-coloring}) is an assignment of colors to the edges (or vertices) in which no two incident edges (or no two adjacent vertices) receive the same color.}. Let $C$ be a finite color set and $f \colon A \to 2^C$ or $f \colon V \to 2^C$ be a function assigning (possibly multiple or no) colors to the arcs or the vertices of a digraph $G = (V, A)$. Given a set $C' \subseteq C$ of colors, by the \emph{removal of $C'$} from $G$, we mean the removal of all arcs or vertices whose set of colors contains at least one color of $C'$.

Depending on whether the arcs or the vertices are colored and what kind of connectivity properties we require after the removal of any possible combinations of a given number of colors, we distinguish six possible generalizations of strong $k$-arc- or $k$-vertex-connectivity of two vertices. The so-obtained definitions are summarized in Table~\ref{table:summary_def_two_vertices}.

Considering any of the six concepts in Table~\ref{table:summary_def_two_vertices}, a digraph on at least two vertices is called \emph{color-avoiding strongly connected} if any two distinct vertices are color-avoiding strongly connected. (The case of one-vertex digraphs is addressed separately in the corresponding sections.) We define a \emph{color-avoiding strongly connected component} of a colored digraph as a maximal set of vertices such that any two of these vertices are color-avoiding strongly connected. A \emph{maximal color-avoiding strongly connected subgraph} of a colored digraph is defined as a maximal subgraph that is color-avoiding strongly connected. Later, we present some examples to show that these two concepts differ.

In our proofs, we make use of the following results. From a theorem of Balas and Yu~\cite{balas1989graphs}*{Theorem~4}, it follows that a $\overline{pK_2}$-free graph has $O(n^{2p-2})$ maximal cliques. Combining this with a theorem of Tsukiyama et al.~\cite{tsukiyama1977new}*{Theorem~3}, these maximal cliques can be found in polynomial time. More precisely, we obtain the following.

\begin{theorem}\label{thm:pK2_free_listing_max_clique}
 All the maximal cliques of a $\overline{pK_2}$-free graph can be found in $O(n^{2p+1})$ time for any $p \in \mathbb{Z}_+$.
\end{theorem}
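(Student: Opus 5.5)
The statement combines two cited results, so the plan is to invoke them and check that the running times multiply to the claimed bound.

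\emph{Step 1: counting the maximal cliques.} I will use the theorem of Balas and Yu~\cite{balas1989graphs}*{Theorem~4}. It bounds the number of maximal cliques of a graph $G$ on $n$ vertices in terms of the largest induced matching of $\overline{G}$. An induced subgraph of $G$ isomorphic to $\overline{pK_2}$ is the same thing as an induced matching of size $p$ in $\overline{G}$. So if $G$ is $\overline{pK_2}$-free, every induced matching of $\overline{G}$ has at most $p-1$ edges. Balas and Yu's bound then gives that $G$ has at most $O(n^{2(p-1)}) = O(n^{2p-2})$ maximal cliques. Concretely, each maximal clique is determined by choosing at most $p-1$ non-edges together with their $2(p-1)$ endpoints, which yields at most $\binom{n}{2p-2}$-type many cliques. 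I will state this count explicitly, citing the theorem rather than reproving it. The edge case $p=1$ is trivial: $\overline{K_2}$-free means $G$ is complete, so there is exactly one maximal clique.

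\emph{Step 2: listing them.} Next I will apply the output-sensitive algorithm of Tsukiyama et al.~\cite{tsukiyama1977new}*{Theorem~3}. It enumerates all maximal independent sets of a graph with $n$ vertices and $m$ edges in $O(nm)$ time per output set, with polynomial space. To list maximal cliques, I run it on the complement graph: maximal cliques of $G$ are exactly the maximal independent sets of $\overline{G}$. Building $\overline{G}$ takes $O(n^2)$ time, and $\overline{G}$ has $m \le n^2$ edges, so the delay is $O(n \cdot n^2) = O(n^3)$ per clique.

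\emph{Step 3: total running time.} Multiplying the number of maximal cliques by the per-clique cost gives $O(n^{2p-2}) \cdot O(n^3) = O(n^{2p+1})$. The preprocessing cost of $O(n^2)$ is absorbed into this bound since $p \ge 1$. This matches the claimed bound.

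\emph{Main obstacle.} The only delicate point is matching the exponent in Balas and Yu's bound to the forbidden-subgraph condition. Their theorem is phrased via induced matchings in the complement, or equivalently via the parameter ``at most $p-1$ disjoint non-adjacent pairs that are pairwise fully adjacent''. I will need to check carefully that the bound is $n^{2p-2}$ and not $n^{2p}$, since the final exponent $2p+1$ depends on it. If their statement gives a bound of the form $\sum_{j\le p-1}\binom{n}{2j}$ or similar, then it is $O(n^{2p-2})$ and the argument goes through as described.
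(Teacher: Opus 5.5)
Your proposal is correct and follows the same route as the paper. The paper likewise combines Balas and Yu's bound of $O(n^{2p-2})$ maximal cliques for $\overline{pK_2}$-free graphs with Tsukiyama et al.'s enumeration run on the complement, at $O(nm)=O(n^3)$ per maximal independent set, to get the $O(n^{2p+1})$ total.
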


\bigskip

{\small
\begin{center}
 \setlength{\tabcolsep}{2.75pt}
 \renewcommand{\arraystretch}{1.1}
 \captionsetup{type=table}
 \begin{tabular}{!{\vrule width 1.25pt} c !{\vrule width 1.25pt} c|c|c !{\vrule width 1.25pt}} \specialrule{1.25pt}{0pt}{0pt}
   & \multicolumn{2}{c}{\multirow{2}{*}{If after the removal of}} & \\[-5pt]
   & \multicolumn{3}{c !{\vrule width 1.25pt}}{} \\
   & \multirow{4}{*}{\parbox{0.22\textwidth}{\centering the arcs of any \\ at most $\ell$ colors $\ldots$}} & \multicolumn{2}{c !{\vrule width 1.25pt}}{\multirow{2}{*}{the vertices of any at most $\ell$ colors}} \\
   & & \multicolumn{2}{c !{\vrule width 1.25pt}}{} \\
   &  & \multirow{2}{*}{\parbox{0.22\textwidth}{\centering either at least one of \\ $u$ or $v$ is removed, or $\ldots$}} & \multirow{2}{*}{\parbox{0.22\textwidth}{\centering excluding $u$ and $v$ $\ldots$}} \\
   & & & \\ \specialrule{1.25pt}{0pt}{0pt}
  \multirow{3}{*}{\parbox{0.25\textwidth}{\centering $\ldots$ $u$ and $v$ remain \\ strongly $k$-arc-connected, \\ then $u$ and $v$ are }} & \multirow{3}{*}{\parbox{0.2\textwidth}{\centering $[\mathrm{A} \ell \, | \, k \mathrm{A}]$-connected.}} & \multirow{3}{*}{\parbox{0.2\textwidth}{\centering $[\mathrm{V} \ell \, | \, k \mathrm{A}]$-connected.}}& \multirow{3}{*}{\parbox{0.2\textwidth}{\centering $[\mathrm{IV} \ell \, | \, k \mathrm{A}]$-connected.}} \\
   & & & \\
   & & & \\ \hline
  \multirow{3}{*}{\parbox{0.25\textwidth}{\centering $\ldots$ $u$ and $v$ remain \\ strongly $k$-vertex-connected, \\ then $u$ and $v$ are}} & \multirow{3}{*}{\parbox{0.2\textwidth}{\centering  $[\mathrm{A} \ell \, | \, k \mathrm{V}]$-connected.}} & \multirow{3}{*}{\parbox{0.2\textwidth}{\centering $[\mathrm{V} \ell \, | \, k \mathrm{V}]$-connected.}} & \multirow{3}{*}{\parbox{0.2\textwidth}{\centering $[\mathrm{IV} \ell \, | \, k \mathrm{V}]$-connected.}} \\
   & & & \\
   & & & \\ \specialrule{1.25pt}{0pt}{0pt}
 \end{tabular}
 \captionof{table}{Summary of the definitions of the color-avoiding connectivity of two vertices presented in Definitions~\ref{def:AlkA_and_AlkV}, \ref{def:VlkA_and_VlkV}, and~\ref{def:IVlkA_and_IVlkV}.}
 \label{table:summary_def_two_vertices}
\end{center}
}

\bigskip

\medskip

In this article, we determine the complexity of finding color-avoiding strongly connected components and maximal color-avoiding strongly connected subgraphs of a colored digraph. To do so, we introduce an auxiliary graph and study its structure. Our results are summarized in Table~\ref{table:summary_of_results}. Note that in some cases, the complexity of these problems depends on whether every vertex is colored with a single color or with some subset of colors.

\begin{center}
\begin{table}[H]
\centering
\footnotesize
 \setlength{\tabcolsep}{5pt}
 \renewcommand{\arraystretch}{1.3}
\begin{tabular}{!{\vrule width 1.5pt} c|c !{\vrule width 1.5pt} c|c !{\vrule width 1.5pt}} \specialrule{1.5pt}{0pt}{0pt}
 \multirow{2}{*}{\textbf{\parbox{100pt}{\centering Type of color- \\ avoiding connectivity}}} & \multirow{2}{*}{\textbf{Type of coloring}} & \multirow{2}{*}{\textbf{\parbox{100pt}{\centering A property of \\ the auxiliary graph}}} & \multirow{2}{*}{\textbf{\parbox{100pt}{\centering Complexity of \\ the problems}}} \\
 & & & \\ \specialrule{1.5pt}{0pt}{0pt}
 \multirow{4}{*}{$[\mathrm{A} \ell \, | \, k \mathrm{A}]$-connectivity} & \multirow{4}{*}{$f \colon A(G) \to 2^C$} & \multirow{4}{*}{\parbox{100pt}{\centering $P_3$-free, \\ see Lemma~\ref{lemma:property_of_AlkA}}}  & \multirow{4}{*}{\parbox{100pt}{\centering in P, \\ see Thm.~\ref{thm:complexity_of_AlkA} and Cor.~\ref{cor:complexity_of_AlkA_2}}} \\
 & & & \\
 & & & \\
 & & & \\ \hline
 \multirow{4}{*}{$[\mathrm{A} \ell \, | \, k \mathrm{V}]$-connectivity} & \multirow{4}{*}{$f \colon A(G) \to 2^C$} & \multirow{4}{*}{\parbox{100pt}{\centering $\overline{\big( \lceil k/2 \rceil + 1 \big) K_2}$-free, \\ see Lemma~\ref{lemma:property_of_AlkV}}} & \multirow{4}{*}{\parbox{100pt}{\centering in P, \\ see Thm.~\ref{thm:complexity_of_AlkV} and Cor.~\ref{cor:complexity_of_AlkV_2}}} \\
 & & & \\
 & & & \\
 & & & \\ \specialrule{1pt}{0pt}{0pt}
 \multirow{4}{*}{$[\mathrm{V} \ell \, | \, k \mathrm{A}]$-connectivity} & \multirow{2}{*}{$f \colon V(G) \to 2^C$}& \multirow{2}{*}{\parbox{100pt}{\centering it can be arbitrary, \\ see the proof of Thm.~\ref{thm:complexity_of_VlkA_and_VlkV}}} & \multirow{2}{*}{\parbox{100pt}{\centering NP-complete, \\ see Thm.~\ref{thm:complexity_of_VlkA_and_VlkV} and Thm.~\ref{thm:complexity_of_VlkA_and_VlkV_2}}} \\ 
 & & & \\ \cline{2-4}
 & \multirow{2}{*}{$f \colon V(G) \to C$} & \multirow{2}{*}{\parbox{100pt}{\centering $\overline{(\ell + 2) K_2}$-free, \\ see Lemma~\ref{lemma:property_of_VlkA_single}}} & \multirow{2}{*}{\parbox{100pt}{\centering in P, \\ see Thm.~\ref{thm:complexity_of_VlkA_single} and Cor.~\ref{cor:complexity_of_VlkA_single_2}}} \\
 & & & \\ \hline
 \multirow{4}{*}{$[\mathrm{V} \ell \, | \, k \mathrm{V}]$-connectivity} & \multirow{2}{*}{$f \colon V(G) \to 2^C$} & \multirow{2}{*}{\parbox{100pt}{\centering it can be arbitrary, \\ see the proof of Thm.~\ref{thm:complexity_of_VlkA_and_VlkV}}} & \multirow{2}{*}{\parbox{100pt}{\centering NP-complete, \\ see Thm.~\ref{thm:complexity_of_VlkA_and_VlkV} and Thm.~\ref{thm:complexity_of_VlkA_and_VlkV_2}}} \\
 & & & \\ \cline{2-4}
 & \multirow{2}{*}{$f \colon V(G) \to C$} & \multirow{2}{*}{\parbox{105pt}{\centering $\overline{\big\lceil (k\ell + k + \ell + 2)/2 \big\rceil K_2}$-free, \\ see Lemma~\ref{lemma:property_of_VlkV_single}}} & \multirow{2}{*}{\parbox{100pt}{\centering in P, \\ see Thm.~\ref{thm:complexity_of_VlkV_single} and Cor.~\ref{cor:complexity_of_VlkV_single_2}}} \\
 & & & \\ \specialrule{1pt}{0pt}{0pt} 
 \multirow{4}{*}{$[\mathrm{IV} \ell \, | \, k \mathrm{A}]$-connectivity} & \multirow{4}{*}{$f \colon V(G) \to 2^C$} & \multirow{4}{*}{\parbox{100pt}{\centering it can be arbitrary \\ even if every vertex is of the same color, \\ see Thm.~\ref{thm:complexity_of_IVlkA_and_IVlkV}}} & \multirow{4}{*}{\parbox{100pt}{\centering NP-complete \\ even if every vertex is of the same color, \\ see Thm.~\ref{thm:complexity_of_IVlkA_and_IVlkV} and Thm.~\ref{thm:complexity_of_IVlkA_and_IVlkV_2}}} \\
 & & & \\
 & & & \\
 & & & \\ \hline
 \multirow{4}{*}{$[\mathrm{IV} \ell \, | \, k \mathrm{V}]$-connectivity} & \multirow{4}{*}{$f \colon V(G) \to 2^C$} & \multirow{4}{*}{\parbox{100pt}{\centering it can be arbitrary \\ even if every vertex is of the same color, \\ see the proof of Thm.~\ref{thm:complexity_of_IVlkA_and_IVlkV}}} & \multirow{4}{*}{\parbox{100pt}{\centering NP-complete \\ even if every vertex is of the same color, \\ see Thm.~\ref{thm:complexity_of_IVlkA_and_IVlkV} and Thm.~\ref{thm:complexity_of_IVlkA_and_IVlkV_2}}} \\
 & & & \\
 & & & \\
 & & & \\ \specialrule{1.5pt}{0pt}{0pt}
\end{tabular}
\caption{Summary of our results on the complexity of finding the color-avoiding strongly connected components and maximal color-avoiding strongly connected subgraphs for different concepts of color-avoiding connectivity (with fixed values of $k, \ell \in \mathbb{Z}_+$).}
\label{table:summary_of_results}
\end{table}
\end{center}

\section{Arc-color-avoiding strong connectivity} \label{section:Alk}

In this section, we focus on arc-color-avoiding strong connectivity. First, we present the formal definitions; then in Section~\ref{section:Alk_old_components}, we present an algorithm for finding the arc-color-avoiding strongly connected components of an arc-colored digraph. Finally, in Section~\ref{section:Alk_new_components}, we present an algorithm for finding the maximal arc-color-avoiding strongly connected subgraphs.

\begin{definition}[${[\mathrm{A} \ell \, | \, k \mathrm{A}]}$- and ${[\mathrm{A} \ell \, | \, k \mathrm{V}]}$-connectivity] \label{def:AlkA_and_AlkV}
 Let $k, \ell \in \mathbb{Z}_+$, let $G$ be a digraph, let $C$ be a finite color set, and let $f \colon A(G) \to 2^C$ be a function assigning some colors to every arc. Two distinct vertices $u, v \in V(G)$ are called \emph{arc-$\ell$-color-avoiding strongly $k$-arc-connected}, or for short \emph{$[\mathrm{A} \ell \, | \, k \mathrm{A}]$-connected} (and respectively, \emph{arc-$\ell$-color-avoiding strongly $k$-vertex-connected}, or for short \emph{$[\mathrm{A} \ell \, | \, k \mathrm{V}]$-connected}) if after the removal of any at most $\ell$ colors, they remain strongly $k$-arc-connected (or strongly $k$-vertex-connected).
\end{definition}

Considering each edge of an undirected graph as two oppositely directed arcs, the concept of arc-color-avoiding strong connectivity can also be applied to undirected graphs. In Figure \ref{fig:example_AlkA}, we can see an example for Definition~\ref{def:AlkA_and_AlkV}.

\begin{figure}[H]
\centering
\begin{tikzpicture}[scale=1.2]
 \tikzstyle{vertex}=[draw,circle,fill,minimum size=8,inner sep=0]
 \tikzset{edge_red/.style={postaction={decorate, decoration={markings, mark=between positions 3pt and 1-8pt step 7pt with {\draw[red!50!black,thin,fill=red!50] (30:0.08) -- (150:0.08) -- (210:0.08) -- (330:0.08) -- (30:0.08);}}}}}

 \tikzset{edge_blue/.style={postaction={decorate, decoration={markings, mark=between positions 5pt and 1-7pt step 6pt with {\draw[blue!50!black,thin,fill=blue!50] (0:0.08) -- (120:0.08) -- (240:0.08) -- (0:0.08);}}}}}
 
 \tikzset{edge_green/.style={postaction={decorate, decoration={markings, mark=between positions 5pt and 1-8pt step 8.5pt with {\draw[green!50!black,thin,fill=green!50] (0:0.16) -- (90:0.08) -- (180:0.08) -- (270:0.08) -- (0:0.16);}}}}}
 
 \tikzset{edge_yellow/.style={postaction={decorate, decoration={markings, mark=between positions 5pt and 1-8pt step 7pt with {\draw[yellow!50!black,thin,fill=yellow!50] (0:0.09) -- (72:0.09) -- (144:0.09) -- (216:0.09) -- (288:0.09) -- (0:0.09);}}}}}

 \tikzset{edge_red_and_blue/.style={postaction={decorate, decoration={markings, mark=between positions 3pt and 1-8pt step 11pt with {\draw[red!50!black,thin,fill=red!50] (30:0.08) -- (150:0.08) -- (210:0.08) -- (330:0.08) -- (30:0.08);}}}, postaction={decorate, decoration={markings, mark=between positions 8.5pt and 1-8pt step 11pt with {\draw[blue!50!black,thin,fill=blue!50] (0:0.08) -- (120:0.08) -- (240:0.08) -- (0:0.08);}}}}}

 \tikzset{edge_red_and_green/.style={postaction={decorate, decoration={markings, mark=between positions 3pt and 1-8pt step 14.5pt with {\draw[red!50!black,thin,fill=red!50] (30:0.08) -- (150:0.08) -- (210:0.08) -- (330:0.08) -- (30:0.08);}}}, postaction={decorate, decoration={markings, mark=between positions 9.5pt and 1-8pt step 14.5pt with {\draw[green!50!black,thin,fill=green!50] (0:0.16) -- (90:0.08) -- (180:0.08) -- (270:0.08) -- (0:0.16);}}}}}

 \tikzset{edge_blue_and_yellow/.style={postaction={decorate, decoration={markings, mark=between positions 10pt and 1-8pt step 12pt with {\draw[blue!50!black,thin,fill=blue!50] (0:0.08) -- (120:0.08) -- (240:0.08) -- (0:0.08);}}}, postaction={decorate, decoration={markings, mark=between positions 4pt and 1-8pt step 12pt with {\draw[yellow!50!black,thin,fill=yellow!50] (0:0.09) -- (72:0.09) -- (144:0.09) -- (216:0.09) -- (288:0.09) -- (0:0.09);}}}}}

 \node[vertex] (v1) at (0,0) [label=left:$v_1$] {};
 \node[vertex] (v2) at (1,1.25) [label=above:$v_2$] {};
 \node[vertex] (v3) at (1,-1.5) [label=below:$v_3$] {};
 \node[vertex] (v4) at (2,0) [label=right:$v_4$] {};
 
 \path[edge_red_and_blue] (v1) -- (v2);
 \draw[->,>={LaTeX[black,length=10pt, width=10pt]},draw opacity=0] (v1) -- (v2);
 \path[edge_blue] (v1) to [bend left=15] (v3);
 \draw[->,>={LaTeX[black,length=10pt, width=10pt]},draw opacity=0] (v1) to [bend left=15] (v3);
 \path[edge_green] (v1) to [bend left=15] (v4);
 \draw[->,>={LaTeX[black,length=10pt, width=10pt]},draw opacity=0] (v1) to [bend left=15] (v4);
 \path[edge_red] (v2) -- (v4);
 \draw[->,>={LaTeX[black,length=10pt, width=10pt]},draw opacity=0] (v2) -- (v4);
 \path[edge_green] (v3) to [bend left=15] (v1);
 \draw[->,>={LaTeX[black,length=10pt, width=10pt]},draw opacity=0] (v3) to [bend left=15] (v1);
 \path[edge_blue_and_yellow] (v4) to [bend left=15] (v1);
 \draw[->,>={LaTeX[black,length=10pt, width=10pt]},draw opacity=0] (v4) to [bend left=15] (v1);
 \path[edge_red_and_green] (v4) -- (v3);
 \draw[->,>={LaTeX[black,length=10pt, width=10pt]},draw opacity=0] (v4) -- (v3);
\end{tikzpicture}
\caption{Examples for $[\mathrm{A} \ell \, | \, k \mathrm{A}]$-connected vertices. The colors red, blue, green, and yellow are denoted by (red) rectangles, (blue) triangles, (green) deltoids, and (yellow) pentagons, respectively. The arcs marked with alternating forms are assigned multiple colors. The vertices $v_1$ and $v_4$ are $[\mathrm{A} 1 \, | \, 1 \mathrm{A}]$-connected, but the vertices $v_1$ and $v_3$ are not: after the removal of the green arcs, there remains no $v_3$-$v_1$ dipath.}
\label{fig:example_AlkA}
\end{figure}

\noindent It follows directly from Menger's classical theorems~\cite{menger1927allgemeinen} that
 \begin{enumerate}[topsep=1.5pt, itemsep=-1.5pt, label=(\alph*)]
  \item \label{item:AlkA} \linkdest{item:AlkA_of_lemma}{} the vertices $u$ and $v$ of an arc-colored digraph $G$ are not $[\mathrm{A} \ell \mid k \mathrm{A}]$-connected if and only if there exist $C' \subseteq C$ and $A' \subseteq A(G)$ with $|C'| \le \ell$ and $|A'| \le k-1$ whose removal from $G$ does not leave any $u$-$v$ or any $v$-$u$ dipath;
  
  \item \label{item:AlkV} \linkdest{item:AlkV_of_lemma}{} the vertices $u$ and $v$ of an arc-colored digraph $G$ are not $[\mathrm{A} \ell \, | \, k \mathrm{V}]$-connected if and only if there exist $C' \subseteq C$ and $A' \subseteq A(G)$ and $V' \subseteq V(G) \setminus \{ u, v \}$ with $|C'| \le \ell$ and $|A'| + |V'| \le k-1$ whose removal from $G$ does not leave any $u$-$v$ or any $v$-$u$ dipath.
 \end{enumerate}

\smallskip

Also observe that the notion of $[\mathrm{A} \ell \, | \, 1 \mathrm{A}]$- and $[\mathrm{A} \ell \, | \, 1 \mathrm{V}]$-connectivity is the same. Finally, note that if every arc of a digraph is colored with zero colors, then two vertices are $[\mathrm{A} \ell \, | \, k \mathrm{A}]$-connected (and respectively, $[\mathrm{A} \ell \, | \, k \mathrm{V}]$-connected) if and only if they are strongly $k$-arc-connected (or strongly $k$-vertex-connected).

\subsection{Arc-color-avoiding strongly connected components} \label{section:Alk_old_components}

In this section, we define arc-color-avoiding strongly connected components. In Sections~\ref{section:AlkA_old_components} and~\ref{section:AlkV_old_components}, we show that the $[\mathrm{A} \ell \, | \, k \mathrm{A}]$- and the $[\mathrm{A} \ell \, | \, k \mathrm{V}]$-connected components of an arc-colored digraph can be found in polynomial time, respectively.

Let us point out that $[\mathrm{A} \ell \, | \, k \mathrm{A}]$-connectivity is an equivalence relation on the vertex set of the digraph for any $k, \ell \in \mathbb{Z_+}$: reflexivity and symmetry are trivial (if considering each vertex being $[\mathrm{A} \ell \, | \, k \mathrm{A}]$-connected to itself), and now we show that transitivity also holds. Assume that the vertices $u$ and $v$ are $[\mathrm{A} \ell \, | \, k \mathrm{A}]$-connected and so are the vertices $v$ and $w$. Then after the removal of any at most $\ell$ colors and any at most $k-1$ arcs, there remains a $u$-$v$, a $v$-$u$, a $v$-$w$, and a $w$-$v$ dipath; thus $u$ and $w$ are also $[\mathrm{A} \ell \, | \, k \mathrm{A}]$-connected. The equivalence classes of $[\mathrm{A} \ell \, | \, k \mathrm{A}]$-connectivity are called \emph{arc-$\ell$-color-avoiding strongly $k$-arc-connected components}, or for short \emph{$[\mathrm{A} \ell \, | \, k \mathrm{A}]$-connected components}.

However, the relation of $[\mathrm{A} \ell \, | \, k \mathrm{V}]$-connectivity is not transitive for any $k \ge 2$: when every arc is colored with zero colors, then $[\mathrm{A} \ell \, | \, k \mathrm{V}]$-connectivity is the same as strong $k$-vertex-connectivity, and it is well known that the latter is not transitive for any $k \ge 2$. Thus, $[\mathrm{A} \ell \, | \, k \mathrm{V}]$-connectivity is not an equivalence relation.

For any $k, \ell \in \mathbb{Z}_+$, the \emph{arc-$\ell$-color-avoiding strongly $k$-vertex-connected components}, or for short the \emph{$[\mathrm{A} \ell \, | \, k \mathrm{V}]$-connected components}, of an arc-colored digraph $G$ are maximal sets of vertices such that any two vertices belonging to the set are $[\mathrm{A} \ell \, | \, k \mathrm{V}]$-connected in $G$. If a vertex $v \in V(G)$ is not $[\mathrm{A} \ell \, | \, k \mathrm{A}]$- or $[\mathrm{A} \ell \, | \, k \mathrm{V}]$-connected to any other vertex of the digraph $G$ for some $k, \ell \in \mathbb{Z}_+$, then $v$ is considered to form an $[\mathrm{A} \ell \, | \, k \mathrm{A}]$- or an $[\mathrm{A} \ell \, | \, k \mathrm{V}]$-connected component of $G$ in itself, respectively.

We say that an arc-colored digraph on at least two vertices is \emph{arc-$\ell$-color-avoiding strongly $k$-arc-connected}, or for short $[\mathrm{A} \ell \, | \, k \mathrm{A}]$-connected (and respectively, \emph{arc-$\ell$-color-avoiding strongly $k$-vertex-connected}, or for short $[\mathrm{A} \ell \, | \, k \mathrm{V}]$-connected) if it has exactly one $[\mathrm{A} \ell \, | \, k \mathrm{A}]$-connected component (or $[\mathrm{A} \ell \, | \, k \mathrm{V}]$-connected component). An arc-colored digraph on exactly one vertex is $[\mathrm{A} \ell \, | \, k \mathrm{A}]$-connected (and respectively, $[\mathrm{A} \ell \, | \, k \mathrm{V}]$-connected) if, after the removal of any at most $\ell$ colors, it has at least $k$ loops.\footnote{That is, an isolated vertex in a digraph $G$ is considered to form an $[\mathrm{A} \ell \, | \, k \mathrm{A}]$-connected component (and a $[\mathrm{A} \ell \, | \, k \mathrm{V}]$-connected component) of $G$, even though the spanned subgraph $G[v]$ is not $[\mathrm{A} \ell \, | \, k \mathrm{A}]$-connected (and not $[\mathrm{A} \ell \, | \, k \mathrm{V}]$-connected).}

To find the arc-color-avoiding strongly connected components of a digraph, we use auxiliary graphs.

\begin{definition}[Auxiliary graphs $G_{[\mathrm{A} \ell \mid k \mathrm{A}]}$ and $G_{[\mathrm{A} \ell \mid k \mathrm{V}]}$]
 Let $k, \ell \in \mathbb{Z}_+$, let $G$ be a digraph, let $C$ be a finite color set, and let $f \colon A(G) \to 2^C$. The \emph{auxiliary graphs} of $G$ are the undirected graphs $G_{[\mathrm{A} \ell \mid k \mathrm{A}]}$ and $G_{[\mathrm{A} \ell \mid k \mathrm{V}]}$ on the same vertex set as $G$ and two vertices of theirs are adjacent if and only if the corresponding vertices in $G$ are $[\mathrm{A} \ell \, | \, k \mathrm{A}]$- or $[\mathrm{A} \ell \, | \, k \mathrm{V}]$-connected, respectively.
\end{definition}

First, note that $W \subseteq V(G)$ is the vertex set of an $[\mathrm{A} \ell \, | \, k \mathrm{A}]$- or an $[\mathrm{A} \ell \, | \, k \mathrm{V}]$-connected component of $G$ if and only if $W$ spans a maximal clique in $G_{[\mathrm{A} \ell \mid k \mathrm{A}]}$ or $G_{[\mathrm{A} \ell \mid k \mathrm{V}]}$, respectively.

Also, observe that for fixed $k, \ell \in \mathbb{Z}_+$, we can construct the auxiliary graphs $G_{[\mathrm{A} \ell \mid k \mathrm{A}]}$ and $G_{[\mathrm{A} \ell \mid k \mathrm{V}]}$ in polynomial time by checking for every pair of vertices whether there exist at least $k$ pairwise arc- or internally vertex-disjoint dipaths between them in both directions after the removal of $C'$ from $G$ for each $C' \subseteq C$ with $|C'| \le \ell$. This can be done by running a maximum $s$-$t$ flow algorithm for any $s,t \in V(G)$ and for any subset $C' \subseteq C$ with $s \ne t$ and $|C'| \le \ell$.

For some examples on $[\mathrm{A} \ell \, | \, k \mathrm{A}]$- or $[\mathrm{A} \ell \, | \, k \mathrm{V}]$-connected components, see Figure~\ref{fig:example_AlkA_2}.

\begin{figure}[H]
\centering
\begin{tikzpicture}[scale=1]
 \tikzstyle{vertex}=[draw,circle,fill,minimum size=8,inner sep=0]
 \tikzstyle{vertex_small}=[draw,circle,fill,minimum size=6,inner sep=0]
 \tikzset{edge_red/.style={postaction={decorate, decoration={markings, mark=between positions 3pt and 1-2pt step 7pt with {\draw[red!50!black,thin,fill=red!50] (30:0.08) -- (150:0.08) -- (210:0.08) -- (330:0.08) -- (30:0.08);}}}}}

 \tikzset{edge_blue/.style={postaction={decorate, decoration={markings, mark=between positions 2pt and 1-2pt step 11pt with {\draw[blue!50!black,thin,fill=blue!50] ($(90:0.08) - (0,0.02)$) -- ($(210:0.08) - (0,0.02)$) -- ($(330:0.08) - (0,0.02)$) -- ($(90:0.08) - (0,0.02)$);}}}, postaction={decorate, decoration={markings, mark=between positions 7.5pt and 1-2pt step 11pt with {\draw[blue!50!black,thin,fill=blue!50] ($(270:0.08) + (0,0.02)$) -- ($(30:0.08) + (0,0.02)$) -- ($(150:0.08) + (0,0.02)$) -- ($(270:0.08) + (0,0.02)$);}}}}}
 
 \tikzset{edge_green/.style={postaction={decorate, decoration={markings, mark=between positions 3pt and 1-3pt step 9pt with {\draw[green!50!black,thin,fill=green!50] (0:0.16) -- (90:0.08) -- (180:0.08) -- (270:0.08) -- (0:0.16);}}}}}

 \node at (-0.25,1.25) {$G$};
 
 \node[vertex] (v1) at (0,0) [label=left:$v_1$] {};
 \node[vertex] (v2) at (1,1) [label=above:$v_2$] {};
 \node[vertex] (v3) at (1,-1) [label=below:$v_3$] {};
 \node[vertex] (v4) at (2,0) [label=below:$v_4$] {};
 \node[vertex] (v5) at (3,1) [label=above:$v_5$] {};
 \node[vertex] (v6) at (3,-1) [label=below:$v_6$] {};
 \node[vertex] (v7) at (4,0) [label=right:$v_7$] {};

 \path[edge_red] (v1) -- (v2);
 \path[edge_blue] (v1) -- (v3);
 \path[edge_green] (v1) -- (v4);
 \path[edge_green] (v2) -- (v3);
 \path[edge_blue] (v2) -- (v4);
 \path[edge_red] (v3) -- (v4);
 \path[edge_red] (v4) -- (v5);
 \path[edge_blue] (v4) -- (v6);
 \path[edge_green] (v4) -- (v7);
 \path[edge_red] (v5) -- (v7);
 \path[edge_blue] (v6) -- (v7);
 
 \begin{scope}[shift={(7,0)}]
 \node at (-0.25,1.25) {$G_{[\mathrm{A} 1 \mid 2 \mathrm{V}]}$};
 
 \node[vertex_small] (v1) at (0,0) [label=left:$v_1$] {};
 \node[vertex_small] (v2) at (1,1) [label=above:$v_2$] {};
 \node[vertex_small] (v3) at (1,-1) [label=below:$v_3$] {};
 \node[vertex_small] (v4) at (2,0) [label=below:$v_4$] {};
 \node[vertex_small] (v5) at (3,1) [label=above:$v_5$] {};
 \node[vertex_small] (v6) at (3,-1) [label=below:$v_6$] {};
 \node[vertex_small] (v7) at (4,0) [label=right:$v_7$] {};

 \draw[very thick] (v1) -- (v2);
 \draw[very thick] (v1) -- (v3);
 \draw[very thick] (v1) -- (v4);
 \draw[very thick] (v2) -- (v3);
 \draw[very thick] (v2) -- (v4);
 \draw[very thick] (v3) -- (v4);
 \draw[very thick] (v4) -- (v7);
 \end{scope}
\end{tikzpicture}
\caption{Examples for $[\mathrm{A} \ell \, | \, k \mathrm{A}]$- and $[\mathrm{A} \ell \, | \, k \mathrm{V}]$-connected components. An edge-colored graph $G$ is shown on the left, and the auxiliary graph $G_{[\mathrm{A} 1 \mid 2 \mathrm{V}]}$ is shown on the right. The vertex sets of the $[\mathrm{A} 1 \, | \, 2 \mathrm{V}]$-connected components of $G$ are $\{ v_1, v_2, v_3, v_4 \}$, $\{ v_4, v_7 \}$, $\{ v_5 \}$, and $\{ v_6 \}$. The auxiliary graphs $G_{[\mathrm{A} 1 \mid 1 \mathrm{A}]} = G_{[\mathrm{A} 1 \mid 1 \mathrm{V}]}$ and $G_{[\mathrm{A} 1 \mid 2 \mathrm{A}]}$ of the undirected graph $G$ consist of a clique on the vertices $v_1, v_2, v_3, v_4, v_7$ and two isolated vertices $v_5$ and $v_6$. The auxiliary graph $G_{[\mathrm{A} 2 \mid 1 \mathrm{A}]}$ consists of seven isolated vertices only.}
\label{fig:example_AlkA_2}
\end{figure}

\subsubsection[\texorpdfstring{The $[\mathrm{A} \ell \, | \, k \mathrm{A}]$}{}-connected components]{\texorpdfstring{The $\boldsymbol{[\mathrm{A} \ell \, | \, k \mathrm{A}]}$}{}-connected components} \label{section:AlkA_old_components}

Since $[\mathrm{A} \ell \, | \, k \mathrm{A}]$-connectivity is an equivalence relation, we conclude the following.

\begin{lemma} \label{lemma:property_of_AlkA}
 Let $k, \ell \in \mathbb{Z}_+$, let $G$ be a digraph, let $C$ be a finite color set, and let $f \colon A(G) \to 2^C$. Then the auxiliary graph $G_{[\mathrm{A} \ell \mid k \mathrm{A}]}$ is $P_3$-free, i.e., it does not contain an induced path on three vertices.
\end{lemma}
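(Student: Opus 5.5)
The plan is to derive the statement directly from the transitivity of $[\mathrm{A} \ell \, | \, k \mathrm{A}]$-connectivity, which was established just before the definition of the components. Suppose for contradiction that $G_{[\mathrm{A} \ell \mid k \mathrm{A}]}$ contains an induced path on three vertices, say $u, v, w$ with $uv$ and $vw$ edges and $uw$ a non-edge. By the definition of the auxiliary graph, $u$ and $v$ are $[\mathrm{A} \ell \, | \, k \mathrm{A}]$-connected in $G$, and so are $v$ and $w$, while $u$ and $w$ are distinct and not $[\mathrm{A} \ell \, | \, k \mathrm{A}]$-connected.

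Next I would invoke transitivity, rechecking it briefly via the Menger-type characterization \ref{item:AlkA}. Fix any $C' \subseteq C$ with $|C'| \le \ell$ and any $A' \subseteq A(G)$ with $|A'| \le k-1$. Since $u,v$ and $v,w$ are $[\mathrm{A} \ell \, | \, k \mathrm{A}]$-connected, \ref{item:AlkA} guarantees that after removing $C'$ and $A'$ there remain a $u$-$v$ dipath, a $v$-$u$ dipath, a $v$-$w$ dipath, and a $w$-$v$ dipath. Concatenating them gives a $u$-$w$ walk and a $w$-$u$ walk, which contain a $u$-$w$ dipath and a $w$-$u$ dipath. Since $C'$ and $A'$ were arbitrary, \ref{item:AlkA} shows that $u$ and $w$ are $[\mathrm{A} \ell \, | \, k \mathrm{A}]$-connected, so $uw$ is an edge. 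This contradicts the assumption.

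Therefore the graph has no induced $P_3$; equivalently, it is a disjoint union of cliques, namely the equivalence classes. There is no real obstacle here. The only point needing care is that \ref{item:AlkA} quantifies over simultaneous removal of colors and at most $k-1$ arcs, so that the same removal can be applied to both pairs $u,v$ and $v,w$ when concatenating paths.
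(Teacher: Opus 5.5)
Your proof is correct and takes the same route as the paper, which obtains the lemma directly from transitivity of $[\mathrm{A} \ell \, | \, k \mathrm{A}]$-connectivity. The paper proves transitivity by the same argument: concatenate the $u$-$v$, $v$-$u$, $v$-$w$, and $w$-$v$ dipaths that survive one common removal of at most $\ell$ colors and at most $k-1$ arcs.
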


Since the maximal cliques of a $P_3$-free graph are its components, and the components of any undirected graph can be found in polynomial time, for example, by using breadth-first search, we conclude that for any fixed $k, \ell \in \mathbb{Z}_+$, the $[\mathrm{A} \ell \, | \, k \mathrm{A}]$-connected components can be found in polynomial time.

\begin{theorem} \label{thm:complexity_of_AlkA}
 For any $k, \ell \in \mathbb{Z}_+$, the following problem can be solved in polynomial time.

 \smallskip
 
 \noindent {\bf \scshape \textsc{$\boldsymbol{[\mathrm{A} \ell \, | \, k \mathrm{A}]}$-ConnectedComponents}} \\
 \textit{Instance:} a digraph $G$, a finite color set $C$, a function $f \colon A(G) \to 2^C$, and $m \in \mathbb{Z}_+$. \\
 \textit{Question:} is it true that $G$ has an $[\mathrm{A} \ell \, | \, k \mathrm{A}]$-connected component of size at least $m$?

 \smallskip
 
 \noindent Moreover, all the $[\mathrm{A} \ell \, | \, k \mathrm{A}]$-connected components of $G$ can be found in polynomial time.
\end{theorem}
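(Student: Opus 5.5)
The plan is to build the auxiliary graph $G_{[\mathrm{A} \ell \mid k \mathrm{A}]}$ in polynomial time and then read off the components as the connected components of this undirected graph, using Lemma~\ref{lemma:property_of_AlkA}.

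\textbf{Constructing the auxiliary graph.} Let $n = |V(G)|$. For each ordered pair $(s,t)$ of distinct vertices and each $C' \subseteq C$ with $|C'| \le \ell$, I delete from $G$ every arc whose color set meets $C'$. In the resulting digraph I compute a maximum $s$-$t$ flow with unit arc capacities and check whether its value is at least $k$; by Menger's theorem this is equivalent to the existence of $k$ pairwise arc-disjoint $s$-$t$ dipaths. Then $s$ and $t$ are adjacent in $G_{[\mathrm{A} \ell \mid k \mathrm{A}]}$ exactly when this test succeeds in both directions for every admissible $C'$.

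The number of color sets to test is $\sum_{i \le \ell} \binom{|C|}{i} = O(|C|^{\ell})$. This is polynomial because $\ell$ is fixed and $|C|$ is at most the input size. Each test is a single polynomial-time max-flow computation, so the whole construction takes polynomial time. This counting step is the only place where any care is needed: fixing $\ell$ is what keeps the enumeration of color subsets polynomial.

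\textbf{Reading off the components.} By the remark after the definition of the auxiliary graphs, the $[\mathrm{A} \ell \, | \, k \mathrm{A}]$-connected components are exactly the vertex sets of the maximal cliques of $G_{[\mathrm{A} \ell \mid k \mathrm{A}]}$. By Lemma~\ref{lemma:property_of_AlkA} this graph is $P_3$-free, since $[\mathrm{A} \ell \, | \, k \mathrm{A}]$-connectivity is transitive. Hence every connected component of it is a clique: if two vertices of the same component were nonadjacent, a shortest path between them would contain an induced $P_3$. It follows that the maximal cliques are precisely the connected components, including the isolated vertices, which matches the convention that a vertex not connected to any other vertex forms a component by itself.

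The components can be found by breadth-first search in $O(n^2)$ time. This lists all $[\mathrm{A} \ell \, | \, k \mathrm{A}]$-connected components, and the decision problem is answered by comparing the largest component size with $m$.
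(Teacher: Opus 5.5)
Your proposal is correct and follows essentially the same route as the paper. You build $G_{[\mathrm{A} \ell \mid k \mathrm{A}]}$ by max-flow tests over all color sets of size at most $\ell$, invoke Lemma~\ref{lemma:property_of_AlkA} ($P_3$-freeness from transitivity) to identify the maximal cliques with the connected components, and find these by breadth-first search, adding only the details the paper leaves implicit: the shortest-path argument that components of a $P_3$-free graph are cliques, and the count showing the color-subset enumeration is polynomial for fixed $\ell$.
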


\subsubsection[\texorpdfstring{The $[\mathrm{A} \ell \, | \, k \mathrm{V}]$}{}-connected components]{\texorpdfstring{The $\boldsymbol{[\mathrm{A} \ell \, | \, k \mathrm{V}]}$}{}-connected components} \label{section:AlkV_old_components}

Even though $[\mathrm{A} \ell \, | \, k \mathrm{V}]$-connectivity is not an equivalence relation, we obtain the following.

\begin{lemma} \label{lemma:property_of_AlkV}
 Let $k, \ell \in \mathbb{Z}_+$, let $G$ be a digraph, let $C$ be a finite color set, and let $f \colon A(G) \to 2^C$. Then the auxiliary graph $G_{[\mathrm{A}\ell \mid k \mathrm{V}]}$ is $\overline{p K_2}$-free, where $p = \lceil k/2 \rceil + 1$.
\end{lemma}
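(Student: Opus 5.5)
The plan is to argue by contradiction, using only the Menger-type characterization~\ref{item:AlkV} stated after Definition~\ref{def:AlkA_and_AlkV}. An induced copy of $\overline{pK_2}$ in $G_{[\mathrm{A}\ell \mid k\mathrm{V}]}$ consists of $2p$ distinct vertices $u_1,v_1,\ldots,u_p,v_p$ such that $u_iv_i$ is a non-edge for every $i \in [p]$ and every other pair among these $2p$ vertices is an edge. Suppose such vertices exist, and fix the pair $u_1,v_1$. Since $u_1$ and $v_1$ are not $[\mathrm{A}\ell \mid k\mathrm{V}]$-connected, \ref{item:AlkV} gives $C' \subseteq C$, $A' \subseteq A(G)$ and $V' \subseteq V(G)\setminus\{u_1,v_1\}$ with $|C'|\le \ell$ and $|A'|+|V'| \le k-1$. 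After removing $C'$, $A'$ and $V'$ from $G$, there is no $u_1$-$v_1$ dipath, or there is no $v_1$-$u_1$ dipath.

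The key step is to show that every vertex $w \in W \colonequals \{u_2,v_2,\ldots,u_p,v_p\}$ must lie in $V'$. Suppose some $w \in W$ is not in $V'$. Then $w$ is adjacent to both $u_1$ and $v_1$ in the auxiliary graph, because only the pairs $u_iv_i$ are non-edges. So after removing $C'$, the vertices $u_1$ and $w$ are still strongly $k$-vertex-connected, and so are $v_1$ and $w$. Now remove the further $|A'|+|V'| \le k-1$ arcs and vertices, none of which is $u_1$, $v_1$ or $w$. Each of these elements destroys at most one of $k$ internally vertex-disjoint (hence arc-disjoint) dipaths. Hence a $u_1$-$w$, a $w$-$u_1$, a $v_1$-$w$ and a $w$-$v_1$ dipath all survive. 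Concatenating them gives both a $u_1$-$v_1$ and a $v_1$-$u_1$ walk, and therefore dipaths, which contradicts the choice of $C'$, $A'$ and $V'$. This is the same transitivity-through-a-third-vertex argument used for $[\mathrm{A}\ell \mid k\mathrm{A}]$-connectivity; it works here exactly because a vertex cut is not allowed to contain the endpoints.

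It then remains to count. We have $W \subseteq V'$, so
\[
k-1 \ge |V'| \ge |W| = 2p-2 = 2\lceil k/2\rceil \ge k,
\]
which is a contradiction, and so $G_{[\mathrm{A}\ell \mid k\mathrm{V}]}$ is $\overline{pK_2}$-free. I do not expect a real obstacle here. The only points to check are that the pair used to separate really involves a vertex outside the removed set, which is guaranteed because $V'$ excludes $u_1$ and $v_1$ and we assumed $w \notin V'$, and that the parameter $p=\lceil k/2\rceil+1$ is exactly what makes $2p-2 \ge k$.
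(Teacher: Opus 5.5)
Your proof is correct and is essentially the paper's argument. The paper applies the pigeonhole principle first to find one of the other $2p-2$ vertices outside $V'$ and then routes through it, whereas you show every such vertex must lie in $V'$ and then count; this is the same reasoning in the opposite order, and your explicit justification that the four dipaths survive the removal of $A'$ and $V'$ fills in a step the paper leaves implicit.
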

\begin{proof}
 Suppose to the contrary that $G_{[\mathrm{A} \ell \mid k \mathrm{V}]}$ contains an induced $\overline{p K_2}$ subgraph. Let $v_1, \ldots, v_{2p} \! \in \! V \big( G_{[\mathrm{A} \ell \mid k \mathrm{V}]} \big)$ denote the vertices of this induced $\overline{p K_2}$ such that $v_{2i-1} v_{2i} \notin E \big( \, \overline{p K_2} \, \big)$ for any $i \in [p]$.

 By a previous observation regarding Definition~\ref{def:AlkA_and_AlkV}, see~\ref{item:AlkV}, since $v_{2p-1} v_{2p} \notin E \big( G_{[\mathrm{A} \ell \mid k \mathrm{V}]} \big)$, there exist $C' \subseteq C$ and $A' \subseteq A \big( G \big)$ and $V' \subseteq V \big( G \big) \setminus \{ v_{2p-1}, v_{2p} \}$ with $|C'| \le \ell$ and $|A'| + |V'| \le k-1$ such that after the removal of $C'$, $A'$, and $V'$ from $G$ there does not remain any $v_{2p-1}$-$v_{2p}$ or any $v_{2p}$-$v_{2p-1}$ dipath. By the pigeonhole principle, at least one of the vertices $v_1, \ldots, v_{2p-2}$ is not in $V'$; without loss of generality, we can assume that $v_1$ is such a vertex. However, since $v_{2p-1} v_1, v_1 v_{2p} \in E \big( G_{[\mathrm{A} \ell \mid k \mathrm{V}]} \big)$, after the removal of $C'$, $A'$, and $V'$ from $G$ there remains a $v_{2p-1}$-$v_1$, a $v_1$-$v_{2p-1}$, a $v_1$-$v_{2p}$, and a $v_{2p}$-$v_1$ dipath. Hence, there also remains a directed walk between $v_{2p-1}$ and $v_{2p}$ in both directions via $v_1$, which is a contradiction.
\end{proof}

By Theorem~\ref{thm:pK2_free_listing_max_clique}, we conclude that for any fixed $k, \ell \in \mathbb{Z}_+$, the $[\mathrm{A} \ell \, | \, k \mathrm{V}]$-connected components can be found in polynomial time.

\begin{theorem} \label{thm:complexity_of_AlkV}
 For any $k, \ell \in \mathbb{Z}_+$, the following problem can be solved in polynomial time.

 \smallskip
 
 \noindent {\bf \textsc{$\boldsymbol{[\mathrm{A} \ell \, | \, k \mathrm{V}]}$-ConnectedComponents}}

 \smallskip
 
 \noindent \textit{Instance:} a digraph $G$, a finite color set $C$, a function $f \colon A(G) \to 2^C$, and $m \in \mathbb{Z}_+$.
 
 \noindent \textit{Question:} is it true that $G$ has an $[\mathrm{A} \ell \, | \, k \mathrm{V}]$-connected component of size at least~$m$?

 \smallskip
 
 \noindent Moreover, all the $[\mathrm{A} \ell \, | \, k \mathrm{V}]$-connected components of $G$ can be found in polynomial time.
\end{theorem}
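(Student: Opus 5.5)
The plan is to reduce the problem to listing maximal cliques in the auxiliary graph $G_{[\mathrm{A} \ell \mid k \mathrm{V}]}$. By the earlier observation, $W \subseteq V(G)$ is the vertex set of an $[\mathrm{A} \ell \, | \, k \mathrm{V}]$-connected component of $G$ exactly when $W$ spans a maximal clique in this graph. So once the graph is built and all its maximal cliques are enumerated, the component of maximum size can be read off and compared with $m$, which settles both the decision question and the listing claim.

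\textbf{Step 1: build the auxiliary graph.} For every ordered pair $s \ne t$ of vertices and every $C' \subseteq C$ with $|C'| \le \ell$, delete the arcs whose color set meets $C'$. In the resulting digraph, compute the maximum number of internally vertex-disjoint $s$-$t$ dipaths by a maximum flow computation after the standard vertex-splitting of every vertex other than $s$ and $t$. The pair $\{s,t\}$ becomes an edge iff this number is at least $k$ in both directions for every such $C'$.

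The number of color sets $C'$ to check is $\sum_{i \le \ell} \binom{|C|}{i} = O(|C|^\ell)$, which is polynomial because $\ell$ is fixed. There are $O(n^2)$ pairs, and each flow computation is polynomial. Hence this step takes polynomial time.

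\textbf{Step 2: enumerate the maximal cliques.} By Lemma~\ref{lemma:property_of_AlkV}, $G_{[\mathrm{A} \ell \mid k \mathrm{V}]}$ is $\overline{pK_2}$-free with $p = \lceil k/2 \rceil + 1$, a constant since $k$ is fixed. Theorem~\ref{thm:pK2_free_listing_max_clique} then lists all maximal cliques in $O(n^{2p+1})$ time, and there are only $O(n^{2p-2})$ of them. Outputting their vertex sets gives all the components. Answering the question amounts to checking whether some clique has at least $m$ vertices.

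One edge case is a vertex not $[\mathrm{A} \ell \, | \, k \mathrm{V}]$-connected to anyone else. It is an isolated vertex of the auxiliary graph, hence a maximal clique of size $1$, which agrees with the convention that such a vertex forms a component by itself.

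No step is a real obstacle, since the structural work is done in Lemma~\ref{lemma:property_of_AlkV}. The only point needing care is that the exponents $\ell$ (number of color subsets) and $2p+1$ (clique listing) depend only on the fixed parameters $k$ and $\ell$, so the total running time is polynomial in the input size.
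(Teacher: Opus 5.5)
Your proposal is correct and takes essentially the same approach as the paper. Like the paper, you build $G_{[\mathrm{A} \ell \mid k \mathrm{V}]}$ with max-flow computations over all color sets of size at most $\ell$, use Lemma~\ref{lemma:property_of_AlkV} to get that it is $\overline{pK_2}$-free, and then list its maximal cliques, which are exactly the components, via Theorem~\ref{thm:pK2_free_listing_max_clique}.
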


\subsection{Maximal arc-color-avoiding strongly connected subgraphs} \label{section:Alk_new_components}

In this section, we define maximal arc-color-avoiding strongly connected subgraphs. Then, in Sections~\ref{section:AlkA_new_components} and~\ref{section:AlkV_new_components}, we show that the maximal $[\mathrm{A} \ell \, | \, k \mathrm{A}]$- and the $[\mathrm{A} \ell \, | \, k \mathrm{V}]$-connected subgraphs of an arc-colored digraph can be found in polynomial time, respectively.

Note that even though some set $W \subseteq V(G)$ of vertices is the vertex set of an $[\mathrm{A} \ell \, | \, k \mathrm{A}]$- or $[\mathrm{A} \ell \, | \, k \mathrm{V}]$-connected component of an arc-colored digraph $G$, the subgraph $G[W]$ is not necessarily $[\mathrm{A} \ell \, | \, k \mathrm{A}]$- or $[\mathrm{A} \ell \, | \, k \mathrm{V}]$-connected. In the example shown in Figure~\ref{fig:example_AlkA_2}, we observed that $W \colonequals \{ v_1, v_2, v_3, v_4, v_7 \}$ is the vertex set of an $[\mathrm{A} 1 \, | \, 1 \mathrm{V}]$-connected component of $G$. On the other hand, the subgraph $G[W]$ is not $[\mathrm{A} 1 \, | \, 1 \mathrm{V}]$-connected since, after the removal of the green arcs from $G[W]$, the vertex $v_7$ becomes isolated.

In the following, we study such maximal vertex sets that span an arc-color-avoiding connected subgraph. For a set $W \subseteq V(G)$ of vertices, the subgraph $G[W]$ is called a \emph{maximal arc-$\ell$-color-avoiding strongly $k$-arc-connected subgraph}, or for short a \emph{maximal $[\mathrm{A} \ell \, | \, k \mathrm{A}]$-connected subgraph} (and \emph{maximal arc-$\ell$-color-avoiding strongly $k$-vertex-connected subgraph}, or for short \emph{maximal $[\mathrm{A} \ell \, | \, k \mathrm{V}]$-connected subgraph}, respectively), of an arc-colored digraph $G$, if it is inclusion-wise maximal for the property of being $[\mathrm{A} \ell \, | \, k \mathrm{A}]$-connected (or $[\mathrm{A} \ell \, | \, k \mathrm{V}]$-connected).

For instance, the graph shown in Figure~\ref{fig:example_AlkA_2} has only one maximal $[\mathrm{A} 1 \, | \, 1 \mathrm{A}]$-connected subgraph: it is spanned by the vertices $v_1, v_2, v_3, v_4$.

Let us present a polynomial-time algorithm for finding the maximal arc-color-avoiding connected subgraphs of a given arc-colored digraph for any fixed $k, \ell \in \mathbb{Z}_+$. To do so, we use algorithms that find the arc-color-avoiding connected components as subroutines. Let \textbf{$\boldsymbol{[\mathrm{A} \ell \, | \, k \mathrm{A}]}$-Connected\-Components}$(G,f)$ and \textbf{$\boldsymbol{[\mathrm{A} \ell \, | \, k \mathrm{V}]}$-Connected\-Components}$(G,f)$ denote polynomial-time algorithms which respectively return the vertex sets of the $[\mathrm{A} \ell \, | \, k \mathrm{A}]$- and $[\mathrm{A} \ell \, | \, k \mathrm{V}]$-connected components of a digraph $G$ with arc-coloring $f \colon A(G) \to 2^C$, where $C$ is a finite color set. Note that by Theorems~\ref{thm:complexity_of_AlkA} and~\ref{thm:complexity_of_AlkV} such polynomial-time algorithms exist. For ease of notation, let $f_W$ denote the restriction of $f$ to $A \big( G[W] \big)$ for any $W \subseteq V(G)$.

\subsubsection[\texorpdfstring{Maximal $[\mathrm{A} \ell \, | \, k \mathrm{A}]$}{}-connected subgraphs]{\texorpdfstring{Maximal $\boldsymbol{[\mathrm{A} \ell \, | \, k \mathrm{A}]}$}{}-connected subgraphs}\label{section:AlkA_new_components}

Now we consider the problem of finding the maximal $[\mathrm{A} \ell \, | \, k \mathrm{A}]$-connected subgraphs of an arc-colored digraph. Algorithm~\ref{algo:AlkA_components_2} iteratively takes the $[\mathrm{A} \ell \, | \, k \mathrm{A}]$-connected components of the $[\mathrm{A} \ell \, | \, k \mathrm{A}]$-connected components. At the end, we examine each 1-vertex $[\mathrm{A} \ell \, | \, k \mathrm{A}]$-connected component whether at least $k$ loops remain in it after deleting any $\ell$ colors; if not, then we discard it. The remaining components are precisely the maximal $[\mathrm{A} \ell \, | \, k \mathrm{A}]$-connected subgraphs.

\bigskip

\begin{algorithm}[H] 
 \caption{Algorithm for finding the maximal $[\mathrm{A} \ell \, | \, k \mathrm{A}]$-connected subgraphs}
 \label{algo:AlkA_components_2}
 \DontPrintSemicolon
 
 \KwIn{a digraph $G$, a finite color set $C$, and a function $f \colon A(G) \to 2^C$.}
 \KwOut{the vertex sets of the maximal $[\mathrm{A} \ell \, | \, k \mathrm{A}]$-connected subgraphs of $G$.}
 
 \smallskip

 $\mathcal{V}_{-1} \gets \{ \emptyset \}$\;
 $\mathcal{V}_0 \gets \big\{ V(G) \big\}$\;
 $i \gets 0$\;
 \While{$\mathcal{V}_i \ne \mathcal{V}_{i-1}$}{
     $\mathcal{V}_{i+1} \gets \emptyset$\;
     \For{$W \in \mathcal{V}_i$}{
         $\mathcal{V}_{i+1} \gets \mathcal{V}_{i+1} \cup [\mathrm{A} \ell \, | \, k \mathrm{A}] \textrm{-ConnectedComponents} \big( G[W],f_W \big)$\; 
     }
     $i \gets i+1$\;
 }
 \For{$W \in \mathcal{V}_i$}{
     \If{$|W| = 1$ \textbf{and} $G[W]$ is not $[\mathrm{A} \ell \, | \, k \mathrm{A}]$-connected}{
         $\mathcal{V}_i \gets \mathcal{V}_i \setminus \{ W \}$\;
     }
 }
 
\Return{$\mathcal{V}_i$}\;
\end{algorithm}

\bigskip

Now we show the correctness of Algorithm~\ref{algo:AlkA_components_2} and we analyze its running time.

\begin{theorem} \label{thm:AlkA_2_algo}
Let $k, \ell \in \mathbb{Z}_+$ be fixed. Given a digraph $G$, a finite color set $C$ and a function $f \colon A(G) \to 2^C$, Algorithm~\ref{algo:AlkA_components_2} outputs the vertex sets of the maximal $[\mathrm{A} \ell \, | \, k \mathrm{A}]$-connected subgraphs of $G$ and runs in polynomial time.
\end{theorem}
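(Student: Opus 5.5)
The proof has two parts: the running time, and the claim that the output is exactly the family of vertex sets of the maximal $[\mathrm{A} \ell \, | \, k \mathrm{A}]$-connected subgraphs.

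\emph{Running time.} The families $\mathcal{V}_i$ are partitions of $V(G)$, because $[\mathrm{A} \ell \, | \, k \mathrm{A}]$-connectivity is an equivalence relation. Each $\mathcal{V}_{i+1}$ refines $\mathcal{V}_i$. If the loop has not yet stopped, the refinement is strict, so the number of classes grows by at least one in every round. Hence there are at most $|V(G)|$ iterations. Each iteration calls the polynomial algorithm of Theorem~\ref{thm:complexity_of_AlkA} at most $|V(G)|$ times. The final filtering step checks, for each $C' \subseteq C$ with $|C'| \le \ell$, whether at least $k$ loops survive; there are $O(|C|^\ell)$ such sets, which is polynomial for fixed $\ell$.

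\emph{Key invariant.} Every vertex set $W$ with $G[W]$ an $[\mathrm{A} \ell \, | \, k \mathrm{A}]$-connected subgraph is contained in some class of every $\mathcal{V}_i$. I would prove this by induction on $i$. Suppose $W \subseteq U \in \mathcal{V}_i$. For $u,v \in W$, after removing any at most $\ell$ colors, $u$ and $v$ are strongly $k$-arc-connected inside $G[W]$. Since $G[W]$ is a subgraph of $G[U]$, they are also strongly $k$-arc-connected in $G[U]$. So all of $W$ lies in one $[\mathrm{A} \ell \, | \, k \mathrm{A}]$-connected component of $G[U]$, that is, in one class of $\mathcal{V}_{i+1}$. (If $|W|=1$ the claim is trivial.)

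\emph{Termination gives connected classes.} When the loop stops, every $U \in \mathcal{V}_i$ satisfies: the $[\mathrm{A} \ell \, | \, k \mathrm{A}]$-connected components of $G[U]$ are just $\{U\}$. If $|U| \ge 2$, this means $G[U]$ is $[\mathrm{A} \ell \, | \, k \mathrm{A}]$-connected by definition. If $|U| = 1$, $G[U]$ is $[\mathrm{A} \ell \, | \, k \mathrm{A}]$-connected exactly when at least $k$ loops survive the removal of any at most $\ell$ colors, which is precisely what the final filter tests. So every returned set spans an $[\mathrm{A} \ell \, | \, k \mathrm{A}]$-connected subgraph.

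\emph{Maximality and completeness.} Let $U$ be a returned set and suppose $U \subsetneq W$ with $G[W]$ connected. By the invariant, $W$ lies inside a single final class, and that class meets $U$, so it must equal $U$. This contradicts $U \subsetneq W$. Conversely, let $G[W]$ be a maximal connected subgraph. By the invariant, $W \subseteq U$ for some final class $U$. That class is not filtered out: if $|U|=1$ then $U=W$, and $G[W]$ is connected. Since $G[U]$ is connected and $W$ is maximal, $W=U$.

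The main subtlety I expect is the one-vertex case. The definition of component treats an isolated vertex as a component even though it need not be connected, so the final filter has to be handled carefully to distinguish these cases.
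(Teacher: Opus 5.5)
Your proposal is correct and follows essentially the same route as the paper. The running time comes from the $\mathcal{V}_i$ being successively refined partitions, and correctness from the fact that a set spanning an $[\mathrm{A} \ell \, | \, k \mathrm{A}]$-connected subgraph inside $G[U]$ must lie in a single $[\mathrm{A} \ell \, | \, k \mathrm{A}]$-connected component of $G[U]$; the paper uses this in contrapositive form for both maximality and completeness, while you state it as an explicit inductive invariant, bound the number of while-loop iterations by counting classes, and justify why the surviving classes of size at least two span $[\mathrm{A} \ell \, | \, k \mathrm{A}]$-connected subgraphs (a step the paper simply calls clear).
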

\begin{proof}
 We discuss the time complexity and the correctness of the algorithm separately.

 \smallskip
 
 \noindent \textit{Time complexity.} Observe that since the $[\mathrm{A} \ell \, | \, k \mathrm{A}]$-connected components form a partition of the vertex set of $G$, so does $ \mathcal{V}_i$ for each $i \in \mathbb{Z}_+ \cup \{ 0 \}$. Therefore, every for loop takes at most $n$ iterations, and each iteration invokes the subroutine $[\mathrm{A} \ell \, | \, k \mathrm{A}]$-Connected\-Components on a subgraph of $G$.
 
 Now we show that the algorithm terminates after at most $n$ iterations of the while loop. To see this, let $i \in \mathbb{Z}_+$ and $W \in \mathcal{V}_i$. Since $\mathcal{V}_i$ is a partition of the vertex set, there exists a unique $U \in \mathcal{V}_{i-1}$ such that $W \subseteq U$. We distinguish two cases: $W = U$ or $W \subsetneq U$. In the first case, $G[U]$ is $[\mathrm{A} \ell \, | \, k \mathrm{A}]$-connected or $|U| = 1$. (If $|U| = 1$ but $G[U]$ is not $[\mathrm{A} \ell \, | \, k \mathrm{A}]$-connected, then $U$ is removed from the output set at the last phase of the algorithm.) Thus, $U$ is, but none of its proper subsets are contained in $\mathcal{V}_j$ for any $j \ge i$. In the latter case, clearly $|W| < |U|$ holds. Furthermore, the only element of $\mathcal{V}_0$ has size $n$ and since $W \notin \mathcal{V}_{i-1}$, we obtain $|W| \le n-i$ by the previous observation. Also note that $\emptyset \notin \mathcal{V}_j$ for all $j \in \mathbb{Z}_+$. Hence, the while loop indeed terminates after at most $n$ iterations.

 Therefore, the subroutine $[\mathrm{A} \ell \, | \, k \mathrm{A}]$-Connected\-Components on a subgraph of $G$ is invoked at most $n^2$ times, and thus by Theorem~\ref{thm:complexity_of_AlkA}, the algorithm runs in polynomial time.

 \smallskip
 
 \noindent \textit{Correctness.} Clearly, the output $\mathcal{V}^*$ of the algorithm is a family of the vertex sets of $[\mathrm{A} \ell \, | \, k \mathrm{A}]$-connected subgraphs of $G$. We only need to show that each of these subgraphs is maximal and that all the maximal $[\mathrm{A} \ell \, | \, k \mathrm{A}]$-connected subgraphs are contained in $\mathcal{V}^*$.
 
 Let us start with proving the inclusion-wise maximality of the members of $\mathcal{V}^*$. Let $X^* \in \mathcal{V}^*$, and let $X \subseteq V(G)$ be the vertex set of a maximal $[\mathrm{A} \ell \, | \, k \mathrm{A}]$-connected subgraph of $G$ such that $X^* \subseteq X$. Note that $X$ is unique. Now we show that $X^* = X$. Suppose to the contrary that $X^* \ne X$. Then, since $\mathcal{V}_i$ is a partition of $V(G)$ for any $i \in \mathbb{Z}_+ \cup \{ 0 \}$ and $\mathcal{V}_0 = \big\{ V(G) \big\}$, there exist an index $j \in \mathbb{Z}_+$ and sets $U \in \mathcal{V}_{j-1}$ and $W \in \mathcal{V}_j$ such that $X^* \subsetneq X \subseteq U$ and $X^* \subseteq W$ but $X \not\subseteq W$. Thus $X$ does not span a $[\mathrm{A} \ell \, | \, k \mathrm{A}]$-connected subgraph in $G[U]$, which implies that it also does not span a $[\mathrm{A} \ell \, | \, k \mathrm{A}]$-connected subgraph of $G$, a contradiction.
 
 Finally, we show that the vertex set of any maximal $[\mathrm{A} \ell \, | \, k \mathrm{A}]$-connected subgraph of $G$ is contained in $\mathcal{V}^*$. Let $X \subseteq V(G)$ be the vertex set of a maximal $[\mathrm{A} \ell \, | \, k \mathrm{A}]$-connected subgraph of $G$. By the above and since $X \subseteq V(G) \in \mathcal{V}_0$ and $\mathcal{V}_i$ forms a partition of $V(G)$ for each $i \in \mathbb{Z}_+$, there exists $X^* \in \mathcal{V}^*$ such that $X \subseteq X^*$. Also by the above, $X^*$ spans a maximal $[\mathrm{A} \ell \, | \, k \mathrm{A}]$-connected subgraph in $G$. Thus, we have $X^* = X$, so $X \in \mathcal{V}^*$.
\end{proof}

By Theorem~\ref{thm:AlkA_2_algo}, we conclude the following.

\begin{corollary} \label{cor:complexity_of_AlkA_2}
 For any $k, \ell \in \mathbb{Z}_+$, the following problem can be solved in polynomial time.

 \smallskip
 
 \noindent {\bf \scshape \textsc{Maximal-$\boldsymbol{[\mathrm{A} \ell \, | \, k \mathrm{A}]}$-ConnectedSubgraphs}}
 \textit{Instance:} a digraph $G$, a finite color set $C$, a function $f \colon A(G) \to 2^C$, and $m \in \mathbb{Z}_+$. \\
 \textit{Question:} is it true that $G$ has a maximal $[\mathrm{A} \ell \, | \, k \mathrm{A}]$-connected subgraph on at least $m$ vertices?
 
 \smallskip
 
 \noindent Moreover, all the maximal $[\mathrm{A} \ell \, | \, k \mathrm{A}]$-connected subgraphs of $G$ can be found in polynomial time.
\end{corollary}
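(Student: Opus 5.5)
The corollary follows from Theorem~\ref{thm:AlkA_2_algo}, so the plan is simply to turn the output of Algorithm~\ref{algo:AlkA_components_2} into an answer to the decision problem.

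Given an instance $(G, C, f, m)$, I would run Algorithm~\ref{algo:AlkA_components_2} on $(G, C, f)$. By Theorem~\ref{thm:AlkA_2_algo}, for fixed $k, \ell$ this takes polynomial time and returns exactly the vertex sets of the maximal $[\mathrm{A} \ell \, | \, k \mathrm{A}]$-connected subgraphs of $G$. Since the returned family is a subfamily of a partition of $V(G)$, it has at most $n$ members and is written out in polynomial size. This already proves the ``moreover'' part of the statement.

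For the decision question, I would scan the returned family and answer ``yes'' if and only if some member $W$ satisfies $|W| \ge m$. This adds only $O(n)$ work.

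The one point to check is that ``subgraph on at least $m$ vertices'' refers to the vertex count of $G[W]$. This holds because the maximal subgraphs are defined as induced subgraphs $G[W]$ with $W$ inclusion-wise maximal, so the sizes to compare are exactly $|W|$. There is no real obstacle: polynomiality rests entirely on Theorem~\ref{thm:AlkA_2_algo}, which in turn relies on the polynomial construction of the auxiliary graph by flow computations over the $O(|C|^\ell)$ color subsets of size at most $\ell$ (polynomial because $\ell$ is fixed), and on Theorem~\ref{thm:complexity_of_AlkA}.
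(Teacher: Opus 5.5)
Your proposal is correct and matches the paper, which derives this corollary directly from Theorem~\ref{thm:AlkA_2_algo}. Running Algorithm~\ref{algo:AlkA_components_2} and then checking whether some returned set $W$ satisfies $|W| \ge m$ is exactly the intended argument.
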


\subsubsection[\texorpdfstring{Maximal $[\mathrm{A} \ell \, | \, k \mathrm{V}]$}{}-connected subgraphs]{\texorpdfstring{Maximal $\boldsymbol{[\mathrm{A} \ell \, | \, k \mathrm{V}]}$}{}-connected subgraphs}
\label{section:AlkV_new_components}

Now we consider the problem of finding the maximal $[\mathrm{A} \ell \, | \, k \mathrm{V}]$-connected subgraphs of an arc-colored digraph. Since $[\mathrm{A} \ell \, | \, k \mathrm{V}]$-connectivity is not transitive, we make use of the following lemma instead.

\begin{lemma} \label{lemma:AlkV_2_intersection}
 Let $k, \ell \in \mathbb{Z}_+$, let $G$ be a digraph, let $C$ be a finite color set, and let $f \colon A(G) \to 2^C$. Then, any two distinct $[\mathrm{A} \ell \, | \, k \mathrm{V}]$-connected components of $G$ has at most $k-1$ common vertices.
\end{lemma}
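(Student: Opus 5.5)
We argue by contradiction, using the same cut-and-reroute idea as in the proof of Lemma~\ref{lemma:property_of_AlkV}. Let $W_1 \ne W_2$ be two $[\mathrm{A} \ell \, | \, k \mathrm{V}]$-connected components of $G$. Suppose $|W_1 \cap W_2| \ge k$. By maximality, neither set contains the other. So there are $u \in W_1 \setminus W_2$ and $v \in W_2 \setminus W_1$. We aim to show that $W_1 \cup W_2$ is a clique in $G_{[\mathrm{A} \ell \mid k \mathrm{V}]}$. This contradicts the maximality of $W_1$, since the components are exactly the maximal cliques of the auxiliary graph.

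\emph{Reducing to cross pairs.} Pairs inside $W_1$ or inside $W_2$ are already adjacent. It therefore suffices to show that any $u \in W_1 \setminus W_2$ and $v \in W_2 \setminus W_1$ are $[\mathrm{A} \ell \, | \, k \mathrm{V}]$-connected. Suppose they are not. By observation~\ref{item:AlkV}, there exist $C' \subseteq C$, $A' \subseteq A(G)$ and $V' \subseteq V(G) \setminus \{u,v\}$ with $|C'| \le \ell$ and $|A'| + |V'| \le k-1$ such that, after removing $C'$, $A'$ and $V'$, no $u$-$v$ dipath or no $v$-$u$ dipath remains.

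\emph{Finding a surviving intermediate vertex.} We have $|V'| \le k-1 < k \le |W_1 \cap W_2|$. Hence some $w \in W_1 \cap W_2$ is not in $V'$. Moreover $w \ne u, v$, because $u \notin W_2$ and $v \notin W_1$. Since $w, u \in W_1$, the vertices $u$ and $w$ are $[\mathrm{A} \ell \, | \, k \mathrm{V}]$-connected. Since $u$ and $w$ both lie outside $V'$, the converse direction of~\ref{item:AlkV} applies to $C', A', V'$. So after the removal, a $u$-$w$ dipath and a $w$-$u$ dipath survive. Similarly, since $w, v \in W_2$, a $w$-$v$ dipath and a $v$-$w$ dipath survive.

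\emph{Concluding.} Concatenating these dipaths gives directed walks from $u$ to $v$ and from $v$ to $u$ in the reduced graph. Each walk contains a dipath in the same direction, which contradicts the choice of $C', A', V'$.

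The only delicate point is applying~\ref{item:AlkV} to the pair $(u,w)$. That criterion requires the removed vertex set to avoid both endpoints, and this is guaranteed precisely because $w$ was chosen outside $V'$. This step is where the bound $k-1$ on the intersection is used.
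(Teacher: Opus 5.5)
Your proposal is correct and follows essentially the same argument as the paper: pick a common vertex outside the small vertex cut $V'$, reroute through it in both directions, and conclude that $W_1 \cup W_2$ is pairwise $[\mathrm{A} \ell \, | \, k \mathrm{V}]$-connected, contradicting maximality. The only difference is that you restrict to cross pairs $u \in W_1 \setminus W_2$, $v \in W_2 \setminus W_1$. This makes the intermediate vertex automatically distinct from both endpoints, whereas the paper treats arbitrary pairs in $V_1 \cup V_2$ and tacitly allows length-$0$ dipaths.
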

\begin{proof}
 Let $V_1$ and $V_2$ be the vertex sets of two $[\mathrm{A} \ell \, | \, k \mathrm{V}]$-connected components such that there exist $k$ distinct vertices $v_1, \ldots, v_k \in V_1 \cap V_2$. We show that any two vertices of $V_1 \cup V_2$ are $[\mathrm{A} \ell \, | \, k \mathrm{V}]$-connected, thus by the inclusion-wise maximality of $V_1$ and $V_2$, we have $V_1 = V_2$.

 Suppose to the contrary that there exist two vertices $u, w \in V_1 \cup V_2$, subsets $C' \subseteq C$ and $A' \subseteq A(G)$ and $V' \subseteq V(G) \setminus \{ u, w \}$ with $|C'| \le \ell$ and $|A'| + |V'| \le k-1$ such that after the removal of $C'$, $A'$, and $V'$ from $G$ there does not remain any $u$-$w$ or any $w$-$u$ dipath. Clearly, $\{ v_1, \ldots, v_k \} \setminus V' \ne \emptyset$; let $i \in [k]$ such that $v_i \notin V'$. Without loss of generality, we can assume that $u \in V_1$. Since $V_1$ is the vertex set of an $[\mathrm{A} \ell \, | \, k \mathrm{V}]$-connected component, after the removal of $C'$, $A'$, and $V'$ from $G$, there remains a $u$-$v_i$ and a $v_i$-$u$ dipath. Analogously, the same holds for $v_i$ and $w$. Therefore, after the removal of $C'$, $A'$, and $V'$ from $G$, there remains a directed $u$-$w$ and a directed $w$-$u$ walk via $v_i$, which is a contradiction.
\end{proof}

Let us present our algorithm, which considers all sets of at most $k$ vertices as follows. For every set of size exactly $k$, we take the $[\mathrm{A} \ell \, | \, k \mathrm{V}]$-connected components of $G$. If none of these components contains our set entirely, we proceed to the next set. Otherwise, by Lemma~\ref{lemma:AlkV_2_intersection}, there is exactly one such component; we then take the $[\mathrm{A} \ell \, | \, k \mathrm{V}]$-connected components of this component and continue this procedure until we obtain the maximal $[\mathrm{A} \ell \, | \, k \mathrm{V}]$-connected subgraphs of size at least $k$. We then consider all sets of at most $k$ vertices in decreasing order of size. For each such set, if it is not contained in any of the previously determined maximal $[\mathrm{A} \ell \, | \, k \mathrm{V}]$-connected subgraphs, we check whether it spans an $[\mathrm{A} \ell \, | \, k \mathrm{V}]$-connected subgraph. The so-obtained subgraphs are precisely the maximal $[\mathrm{A} \ell \, | \, k \mathrm{V}]$-connected ones.

\bigskip

\begin{algorithm}[H] 
 \caption{Algorithm for finding the maximal $[\mathrm{A} \ell \, | \, k \mathrm{V}]$-connected subgraphs}
 \label{algo:AlkV_components_2}
 \DontPrintSemicolon
 
 \KwIn{a digraph $G$, a finite color set $C$, and a function $f \colon A(G) \to 2^C$.}
 \KwOut{the vertex sets of the maximal $[\mathrm{A} \ell \, | \, k \mathrm{V}]$-connected subgraphs of $G$.}
 
 \smallskip

 $\mathcal{V}^* \gets \emptyset$\;
 \For{$W \subseteq V(G)$ with $|W| = k$}{
     $\mathcal{V} \gets [\mathrm{A} \ell \, | \, k \mathrm{V}] \textrm{-ConnectedComponents} \big( G, f \big)$\;
     \While{there exists $U \in \mathcal{V}$ for which $W \subseteq U$}{
         \If{$G[U]$ is $[\mathrm{A} \ell \, | \, k \mathrm{V}]$-connected}{
             $\mathcal{V}^* \gets \mathcal{V}^* \cup \{ U \}$\;
             \textbf{break}\;
         }
         \ElseIf{$U = W$}{
             \textbf{break}\;
         }
         \Else{
             $\mathcal{V} \gets [\mathrm{A} \ell \, | \, k \mathrm{V}] \textrm{-ConnectedComponents} \big( G[U], f_U \big)$
         }
     }
 }
 \For{$i = k-1$ \textbf{to} 1}{
     \For{$W \subseteq V(G)$ with $|W| = i$}{
         \If{$G[W]$ is $[\mathrm{A} \ell \, | \, k \mathrm{V}]$-connected \textbf{and} there does not exist $U \in \mathcal{V}^*$ with $W \subset U$}{
             $\mathcal{V}^* \gets \mathcal{V}^* \cup \{ W \}$\;
         }
     }
 }
\Return{$\mathcal{V}^*$}\;
\end{algorithm}

\bigskip

Let us show the correctness of Algorithm~\ref{algo:AlkV_components_2} and analyze its running time.

\begin{theorem} \label{thm:AlkV_2_algo}
 Let $k, \ell \in \mathbb{Z}_+$ be fixed. Given a digraph $G$, a finite color set $C$ and a function $f \colon A(G) \to 2^C$, Algorithm~\ref{algo:AlkV_components_2} outputs the vertex sets of the maximal $[\mathrm{A} \ell \, | \, k \mathrm{V}]$-connected subgraphs of $G$ and runs in polynomial time.
\end{theorem}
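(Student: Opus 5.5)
We treat time complexity and correctness separately, mirroring the proof of Theorem~\ref{thm:AlkA_2_algo}.

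\emph{Time complexity.} The outer loop runs over $\binom{n}{k} = O(n^k)$ sets $W$, polynomially many for fixed $k$. For a fixed $W$, consider the while loop. By Lemma~\ref{lemma:AlkV_2_intersection}, at most one component $U \in \mathcal{V}$ contains the $k$-set $W$, so the loop follows a single chain of sets. Each non-terminating iteration replaces $U$ by a component of $G[U]$ containing $W$. This component is a proper subset of $U$: if it equalled $U$, then any two vertices of $U$ would be $[\mathrm{A}\ell \,|\, k\mathrm{V}]$-connected in $G[U]$, so $G[U]$ would be $[\mathrm{A}\ell \,|\, k\mathrm{V}]$-connected and we would have broken out. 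Hence $|U|$ strictly decreases, giving at most $n$ iterations. Each iteration makes one call to the subroutine of Theorem~\ref{thm:complexity_of_AlkV}, plus one connectivity test. The test is polynomial: run max-flow computations for every pair and every $C' \subseteq C$ with $|C'| \le \ell$, and handle the one-vertex case by counting loops. The final loops examine $O(n^{k-1})$ sets, each with a polynomial test and a scan of $\mathcal{V}^*$, whose size is $O(n^k)$.

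\emph{Correctness, large subgraphs.} The key observation is monotonicity. If $X \subseteq U$ and $G[X]$ is $[\mathrm{A}\ell \,|\, k\mathrm{V}]$-connected, then any two vertices of $X$ are $[\mathrm{A}\ell \,|\, k\mathrm{V}]$-connected in $G[U]$, since dipaths in $G[X]$ survive in $G[U]$. So $X$ lies in some component of $G[U]$.

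Now let $X$ be the vertex set of a maximal $[\mathrm{A}\ell \,|\, k\mathrm{V}]$-connected subgraph with $|X| \ge k$, and pick any $k$-subset $W \subseteq X$. By induction along the chain, $X$ is always contained in the unique component $U \supseteq W$ (uniqueness again from Lemma~\ref{lemma:AlkV_2_intersection}). So the loop never exits for lack of $U$, and the case $U = W$ cannot make us lose $X$. At termination $G[U]$ is $[\mathrm{A}\ell \,|\, k\mathrm{V}]$-connected with $X \subseteq U$, and maximality forces $U = X$.

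Conversely, every $U$ added in the first phase spans an $[\mathrm{A}\ell \,|\, k\mathrm{V}]$-connected subgraph. Applying the same argument to the maximal $X \supseteq U$ with a $k$-subset $W \subseteq U \subseteq X$ shows that the chain started from $W$ ends at $X$. This chain is exactly the one that produced $U$, since the chain is determined by $W$. Hence $U = X$ is maximal.

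\emph{Correctness, small subgraphs, and the main difficulty.} In the second phase, sets are processed in decreasing size. A set $W$ with $|W| \le k-1$ spanning a connected subgraph is maximal exactly when it is not properly contained in a larger such set. That larger set is either of size at least $k$, and hence already in $\mathcal{V}^*$ by the first phase, or of size between $|W|+1$ and $k-1$. In the latter case the maximal set containing $W$ was added earlier, because of the decreasing order, and maximal sets are exactly those not properly contained in one. So the test against $\mathcal{V}^*$ suffices: by induction on decreasing size, exactly the maximal sets are added. One-vertex sets use the loop definition and are consistent with this.

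The main obstacle is the chain argument in the large case. It requires ensuring that the component containing $W$ is unique at every stage, which needs Lemma~\ref{lemma:AlkV_2_intersection} applied to $G[U]$ rather than $G$; the lemma holds for an arbitrary digraph, so this is fine. It also requires the monotonicity step showing that a connected $X$ is never split.
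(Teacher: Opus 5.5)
Your proposal is correct and follows the paper's route: Lemma~\ref{lemma:AlkV_2_intersection} gives uniqueness of the component containing a $k$-set, the strictly shrinking chain gives at most $n$ subroutine calls per $W$, and there are $\sum_{i=1}^{k}\binom{n}{i}$ outer iterations. The paper compresses correctness to a one-line appeal to that lemma, and the details you add are exactly the ones it leaves implicit: the monotonicity invariant for the first phase and the decreasing-size induction for the second phase.
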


\begin{proof}
 We discuss the correctness and the time complexity of the algorithm separately.

 \smallskip
 
 \noindent \textit{Correctness.} The correctness of the algorithm directly follows from Lemma~\ref{lemma:AlkV_2_intersection}.

 \smallskip
 
 \noindent \textit{Time complexity.} It is not difficult to see that the for loops altogether take $\sum_{i=1}^k \binom{n}{i}$ iterations, and each iteration invokes the subroutine $[\mathrm{A} \ell \, | \, k \mathrm{V}]$-Connected\-Components on some subgraph of $G$ at most $n$ times.
 
 Therefore, the subroutine $[\mathrm{A} \ell \, | \, k \mathrm{V}]$-Connected\-Components on some subgraph of $G$ is invoked $O(n^{k+1})$ times, and by Theorem~\ref{thm:complexity_of_AlkV}, this subroutine has polynomial running time.
 
 Hence, Algorithm~\ref{algo:AlkV_components_2} runs in polynomial time.
\end{proof}

By Theorem~\ref{thm:AlkV_2_algo}, we conclude that for any fixed $k, \ell \in \mathbb{Z}_+$, the maximal $[\mathrm{A} \ell \, | \, k \mathrm{V}]$-connected subgraphs can be found in polynomial time.

\begin{corollary} \label{cor:complexity_of_AlkV_2}
 For any $k, \ell \in \mathbb{Z}_+$, the following problem can be solved in polynomial time.

 \smallskip
 
 \noindent {\bf \scshape \textsc{Maximal-$\boldsymbol{[\mathrm{A} \ell \, | \, k \mathrm{V}]}$-ConnectedSubgraphs}} \\
 \textit{Instance:} a digraph $G$, a finite color set $C$, a function $f \colon A(G) \to 2^C$, and $m \in \mathbb{Z}_+$. \\
 \textit{Question:} is it true that $G$ has a maximal $[\mathrm{A} \ell \, | \, k \mathrm{V}]$-connected subgraph on at least $m$ vertices?
 
 \smallskip
 
 \noindent Moreover, all the maximal $[\mathrm{A} \ell \, | \, k \mathrm{V}]$-connected subgraphs of $G$ can be found in polynomial time.
\end{corollary}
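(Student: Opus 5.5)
The corollary follows from Theorem~\ref{thm:AlkV_2_algo}. Since $k$ and $\ell$ are fixed, Algorithm~\ref{algo:AlkV_components_2} returns the family $\mathcal{V}^*$ of vertex sets of all maximal $[\mathrm{A} \ell \, | \, k \mathrm{V}]$-connected subgraphs of $G$ in polynomial time. This already proves the ``moreover'' part of the statement. For the decision problem, given an instance $(G, C, f, m)$, we run the algorithm and answer ``yes'' if and only if some $U \in \mathcal{V}^*$ satisfies $|U| \ge m$.

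It remains to check that this post-processing is polynomial. Its cost is linear in $|\mathcal{V}^*|$, so we must confirm that the output family has polynomial size. This follows from the structure of the algorithm, and it is the only point needing care. Each set added in the first phase is added during the iteration for one $W$ with $|W| = k$, and each iteration adds at most one set. The second phase adds at most one set per $W$ with $|W| \le k-1$. Hence
\[
|\mathcal{V}^*| \le \sum_{i=1}^{k} \binom{n}{i} = O(n^k).
\]
Comparing each set's size with $m$ takes $O(n)$ time. The whole check is therefore $O(n^{k+1})$, which is dominated by the running time of the algorithm from Theorem~\ref{thm:AlkV_2_algo}.

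The input size also includes $|C|$ and the encoding of $f$. The subroutine inside the algorithm enumerates color sets $C'$ with $|C'| \le \ell$, and there are $O(|C|^\ell)$ of them. This is polynomial in the input size for fixed $\ell$, as already used in the construction of the auxiliary graph $G_{[\mathrm{A} \ell \mid k \mathrm{V}]}$. So the decision problem, and the listing of all maximal $[\mathrm{A} \ell \, | \, k \mathrm{V}]$-connected subgraphs, can both be done in polynomial time.
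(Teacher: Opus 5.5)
Your proposal is correct and follows the paper, which obtains the corollary directly from Theorem~\ref{thm:AlkV_2_algo}: run Algorithm~\ref{algo:AlkV_components_2} and compare the sizes of the output sets with $m$. Your bound $|\mathcal{V}^*| \le \sum_{i=1}^{k}\binom{n}{i}$ is valid but not needed, since any polynomial-time algorithm necessarily produces an output of polynomial size.
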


\section{Vertex-color-avoiding strong connectivity} \label{section:Vlk}

In this section, we focus on vertex-color-avoiding strong connectivity. First, we present the formal definitions. Then, in Section~\ref{section:Vlk_old_components}, we show that in general the problem of finding the vertex-color-avoiding strongly connected components of a vertex-colored digraph is NP-hard, but when every vertex is single-colored, then the problem can be solved in polynomial time. Finally, in Section~\ref{section:Vlk_new_components}, we show that the same holds for the problem of finding the maximal vertex-color-avoiding strongly connected subgraphs.

\begin{definition}[${[\mathrm{V} \ell \, | \, k \mathrm{A}]}$- and ${[\mathrm{V} \ell \, | \, k \mathrm{V}]}$-connectivity] \label{def:VlkA_and_VlkV}
 Let $k, \ell \in \mathbb{Z}_+$, let $G$ be a digraph, let $C$ be a finite color set, and let $f \colon V(G) \to 2^C$ be a function assigning some colors to every vertex. Two distinct vertices $u, v \in V(G)$ are called \emph{vertex-$\ell$-color-avoiding strongly $k$-arc-connected}, or for short \emph{$[\mathrm{V} \ell \, | \, k \mathrm{A}]$-connected} (and respectively, \emph{vertex-$\ell$-color-avoiding strongly $k$-vertex-connected}, or for short \emph{$[\mathrm{V} \ell \, | \, k \mathrm{V}]$-connected}) if after the removal of any at most $\ell$ colors, either at least one of the vertices $u$ and $v$ is removed, or they remain strongly $k$-arc-connected (or strongly $k$-vertex-connected).
\end{definition}

In Figure~\ref{fig:example_VlkA}, we can see an example for Definition~\ref{def:VlkA_and_VlkV}. Considering each edge of an undirected graph as two oppositely directed arcs, the concept of vertex-color-avoiding strong connectivity can also be applied to undirected graphs.

\begin{figure}[H]
\centering
\begin{tikzpicture}[scale=1]
 \tikzstyle{vertex_red}=[draw, thick, red!50!black, shape=rectangle, fill=red!50, minimum size=8pt, inner sep=0]
 \tikzstyle{vertex_blue}=[draw, thick, blue!50!black, shape=regular polygon, regular polygon sides=3, fill=blue!50, minimum size=12pt, inner sep=0]
 \tikzstyle{vertex_green}=[draw, green!50!black, kite, fill=green!50, minimum size=11pt, inner sep=0.1pt]
 \tikzstyle{vertex_yellow}=[draw, yellow!50!black, shape=regular polygon, regular polygon sides=5, fill=yellow!50, minimum size=11pt, inner sep=0]

 \node at (-1.125,1) {$G_1$};
 \node[matrix, draw, shape=circle, thick, inner sep=2pt] (v1) at (90:1) [label=above:$v_1$]
 {\node[vertex_red, minimum size=6pt] {}; &  \\
   & \node[vertex_blue, minimum size=9pt] {}; \\ };
 \node[vertex_red] (v2) at (210:1) [label=below left:$v_2$] {};
 \node[vertex_blue] (v3) at (330:1) [label=below right:$v_3$] {};
 
 \draw[very thick,->,>={LaTeX[black,length=8pt, width=8pt]}] (v1) -- (v2);
 \draw[very thick,->,>={LaTeX[black,length=8pt, width=8pt]}] (v2) -- (v3);
 \draw[very thick,->,>={LaTeX[black,length=8pt, width=8pt]}] (v3) -- (v1);
 
 \begin{scope}[shift={(5,0)}]
 \node at (-1.125,1) {$G_2$};
 
 \node[vertex_green] (w1) at (90:1) [label=above:$w_1$] {};
 \node[vertex_red] (w2) at (210:1) [label=below left:$w_2$] {};
 \node[vertex_blue] (w3) at (330:1) [label=below right:$w_3$] {};
 
 \draw[very thick,->,>={LaTeX[black,length=8pt, width=8pt]}] (w1) -- (w2);
 \draw[very thick,->,>={LaTeX[black,length=8pt, width=8pt]}] (w2) -- (w3);
 \draw[very thick,->,>={LaTeX[black,length=8pt, width=8pt]}] (w3) -- (w1);
 \end{scope}
\end{tikzpicture}
\caption{Examples for $[\mathrm{V} \ell \, | \, k \mathrm{A}]$-connected vertices. The colors red, blue, and green are denoted by (red) squares, (blue) triangles, and (green) deltoids, respectively. The vertex $v_1$ is assigned two colors, namely red and blue. In the digraph $G_1$, any two vertices are $[\mathrm{V} 2 \, | \, 1 \mathrm{A}]$-connected; thus, $G_1$ is $[\mathrm{V} 2 \, | \, 1 \mathrm{A}]$-connected. In the digraph $G_2$, no pair of vertices are $[\mathrm{V} 1 \, | \, 1 \mathrm{A}]$-connected.}
\label{fig:example_VlkA}
\end{figure}

It follows directly from Menger's classical theorems~\cite{menger1927allgemeinen} that
 \begin{enumerate}[topsep=1.5pt, itemsep=-1.5pt, label=(\alph*)] \setcounter{enumi}{2}
  \item \label{item:VlkA} \linkdest{item:VlkA_of_lemma}{} the vertices $u$ and $v$ of a vertex-colored digraph $G$ are not $[\mathrm{V} \ell \mid k \mathrm{A}]$-connected if and only if there exist $C' \subseteq C \setminus \big( f(u) \cup f(v) \big)$ and $A' \subseteq A(G)$ with $|C'| \le \ell$ and $|A'| \le k-1$ whose removal from $G$ does not leave any $u$-$v$ or any $v$-$u$ dipath;
  
  \item \label{item:VlkV} \linkdest{item:VlkV_of_lemma}{} the vertices $u$ and $v$ of a vertex-colored digraph $G$ are not $[\mathrm{V} \ell \, | \, k \mathrm{V}]$-connected if and only if there exist $C' \subseteq C \setminus \big( f(u) \cup f(v) \big)$ and $A' \subseteq A(G)$ and $V' \subseteq V(G) \setminus \{ u, v \}$ with $|C'| \le \ell$ and $|A'| + |V'| \le k-1$ whose removal from $G$ does not leave any $u$-$v$ or any $v$-$u$ dipath.
 \end{enumerate}

\medskip

Also observe that the notion of $[\mathrm{V} \ell \, | \, 1 \mathrm{A}]$- and $[\mathrm{V} \ell \, | \, 1 \mathrm{V}]$-connectivity is the same. Finally, note that if every vertex of a digraph is colored with zero colors, then two vertices are $[\mathrm{V} \ell \, | \, k \mathrm{A}]$-connected (and respectively, $[\mathrm{V} \ell \, | \, k \mathrm{V}]$-connected) if and only if they are strongly $k$-arc-connected (or strongly $k$-vertex-connected).

\subsection{Vertex-color-avoiding strongly connected components} \label{section:Vlk_old_components}

In this section, we define vertex-color-avoiding strongly connected components and prove that finding the $[\mathrm{V} \ell \, | \, k \mathrm{A}]$- and the $[\mathrm{V} \ell \, | \, k \mathrm{V}]$-connected components of a vertex-colored digraph is NP-hard. On the other hand, in Sections~\ref{section:VlkA_old_components} and~\ref{section:VlkV_old_components}, we show that the same problem in the setting where every vertex is single-colored can be solved in polynomial time.

It is not difficult to see that the relations of $[\mathrm{V} \ell \, | \, k \mathrm{A}]$- and $[\mathrm{V} \ell \, | \, k \mathrm{V}]$-connectivity are not transitive, and thereby they are not equivalence relations for any $k, \ell \in \mathbb{Z}_+$. 

For any $k, \ell \in \mathbb{Z}_+$, the \emph{vertex-$\ell$-color-avoiding strongly $k$-arc-connected components}, or for short the \emph{$[\mathrm{V} \ell \, | \, k \mathrm{A}]$-connected components} (and respectively, \emph{vertex-$\ell$-color-avoiding strongly $k$-vertex-connected components}, or for short the \emph{$[\mathrm{V} \ell \, | \, k \mathrm{V}]$-connected components}) of a vertex-colored digraph $G$ are maximal sets of vertices such that any two of these vertices are $[\mathrm{V} \ell \, | \, k \mathrm{A}]$-connected (or $[\mathrm{V} \ell \, | \, k \mathrm{V}]$-connected) in $G$, respectively. If a vertex $v \in V(G)$ is not $[\mathrm{V} \ell \, | \, k \mathrm{A}]$- or $[\mathrm{V} \ell \, | \, k \mathrm{V}]$-connected to any other vertex of the digraph $G$ for some $k, \ell \in \mathbb{Z}_+$, then $v$ is considered to form a $[\mathrm{V} \ell \, | \, k \mathrm{A}]$- or $[\mathrm{V} \ell \, | \, k \mathrm{V}]$-connected component of $G$ in itself, respectively.

We say that a vertex-colored digraph on at least two vertices is \emph{vertex-$\ell$-color-avoiding strongly $k$-arc-connected}, or for short $[\mathrm{V} \ell \, | \, k \mathrm{A}]$-connected (and respectively, \emph{vertex-$\ell$-color-avoiding strongly $k$-vertex-connected}, or for short $[\mathrm{V} \ell \, | \, k \mathrm{V}]$-connected) if it has exactly one $[\mathrm{V} \ell \, | \, k \mathrm{A}]$-connected component (or $[\mathrm{V} \ell \, | \, k \mathrm{V}]$-connected component). A vertex-colored digraph on exactly one vertex is $[\mathrm{V} \ell \, | \, k \mathrm{A}]$-connected (and respectively, $[\mathrm{V} \ell \, | \, k \mathrm{V}]$-connected) if it has at least $k$ loops. For an example, see Figure~\ref{fig:example_VlkA}.

To find the vertex-color-avoiding strongly connected components of a vertex-colored digraph, we use auxiliary graphs.

\begin{definition}[Auxiliary graphs $G_{[\mathrm{V} \ell \mid k \mathrm{A}]}$ and $G_{[\mathrm{V} \ell \mid k \mathrm{V}]}$]
 Let $k, \ell \in \mathbb{Z}_+$, let $G$ be a digraph, let $C$ be a finite color set, and let $f \colon V(G) \to 2^C$. The \emph{auxiliary graphs} of $G$ are the undirected graphs $G_{[\mathrm{V} \ell \mid k \mathrm{A}]}$ and $G_{[\mathrm{V} \ell \mid k \mathrm{V}]}$ on the same vertex set as $G$ and two vertices of theirs are adjacent if and only if the corresponding vertices in $G$ are $[\mathrm{V} \ell \, | \, k \mathrm{A}]$- or $[\mathrm{V} \ell \, | \, k \mathrm{V}]$-connected, respectively.
\end{definition}

Similarly as in the case of $[\mathrm{A} \ell \, | \, k \mathrm{A}]$- and $[\mathrm{A} \ell \, | \, k \mathrm{V}]$-connectivity, we have that $W \subseteq V(G) $ is the vertex set of a $[\mathrm{V} \ell \, | \, k \mathrm{A}]$- or a $[\mathrm{V} \ell \, | \, k \mathrm{V}]$-connected component of $G$ if and only if $W$ spans a maximal clique in $G_{[\mathrm{V} \ell \mid k \mathrm{A}]}$ or $G_{[\mathrm{V} \ell \mid k \mathrm{V}]}$, respectively.

Also, observe that for fixed $k, \ell \in \mathbb{Z}_+$, we can construct the auxiliary graphs $G_{[\mathrm{V} \ell \mid k \mathrm{A}]}$ and $G_{[\mathrm{V} \ell \mid k \mathrm{V}]}$ in polynomial time by checking for every pair of vertices whether for each $C' \subseteq C$ with $|C'| \le \ell$ (including $C'=\emptyset$) after the removal of $C'$ from $G$ either at least one of the two vertices is removed or there remain at least $k$ pairwise arc- or internally vertex-disjoint dipaths between them in both directions. This can be done by running a maximum $s$-$t$ flow algorithm for any $s,t \in V(G)$ and for any subset $C' \subseteq C \setminus \big( f(s) \cup f(t) \big)$ with $s \ne t$ and $|C'| \le \ell$.

Now we prove that unlike finding arc-color-avoiding strongly connected components, the problem of finding vertex-color-avoiding strongly connected components is NP-hard.

\begin{theorem} \label{thm:complexity_of_VlkA_and_VlkV}
For any $k, \ell \in \mathbb{Z}_+$, the following problems are NP-complete even for undirected graphs.

 \smallskip
 
 \noindent {\bf \textsc{$\boldsymbol{[\mathrm{V} \ell \, | \, k \mathrm{A}]}$-ConnectedComponents}}

 \smallskip
 
 \noindent \textit{Instance:} a digraph $G$, a finite color set $C$, a function $f \colon V(G) \to 2^C$, and $m \in \mathbb{Z}_+$.
 
 \noindent \textit{Question:} is it true that $G$ has a $[\mathrm{V} \ell \, | \, k \mathrm{A}]$-connected component of size at least~$m$?

 \medskip
 
 \noindent {\bf \textsc{$\boldsymbol{[\mathrm{V} \ell \, | \, k \mathrm{V}]}$-ConnectedComponents}} 

 \smallskip

 \noindent \textit{Instance:} a digraph $G$, a finite color set $C$, a function $f \colon V(G) \to 2^C$, and $m \in \mathbb{Z}_+$.
 
 \noindent \textit{Question:} is it true that $G$ has a $[\mathrm{V} \ell \, | \, k \mathrm{V}]$-connected component of size at least~$m$?
\end{theorem}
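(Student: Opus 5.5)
Membership in NP is immediate. A certificate is a set $W \subseteq V(G)$ with $|W| \ge m$. We verify in polynomial time that every pair in $W$ is $[\mathrm{V} \ell \, | \, k \mathrm{A}]$-connected (respectively $[\mathrm{V} \ell \, | \, k \mathrm{V}]$-connected), using the max-flow test described before the theorem, which enumerates all $C'$ with $|C'| \le \ell$. Any such clique of the auxiliary graph extends to a maximal one, i.e.\ to a component of size at least $m$. For hardness I will reduce from \textsc{Clique}, exploiting the remark in Table~\ref{table:summary_of_results} that the auxiliary graph can be arbitrary. Given a simple graph $H$ and an integer $m$, I will build in polynomial time a vertex-colored undirected graph $G$ with $V(H) \subseteq V(G)$ such that, for $u,v \in V(H)$, $u$ and $v$ are connected in $G$ in the required sense exactly when $uv \in E(H)$. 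The extra vertices must be handled so that they cannot enlarge cliques; one option is to make them isolated in the auxiliary graph, in which case $m \ge 2$ suffices.

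\emph{Construction.} I assign a private color $c_{uv}$ to each non-edge $uv$ of $H$. Each $v \in V(H)$ receives the set $\{c_{uv} : uv \notin E(H)\}$ of its private non-edge colors. Since $u$ and $v$ then carry no common color, while $v$ and any third vertex $w$ share $c_{vw}$ only if $vw$ is a non-edge, the multicolor structure can encode arbitrary adjacency. To obtain high connectivity between all pairs, I add many ``hub'' vertices $h_1, \dots, h_N$ with $N \ge k + \ell + 1$ (or more), each joined to every vertex of $V(H)$ and each colored with a fresh color of its own. Removing at most $\ell$ colors deletes at most $\ell$ hubs, so every pair $u,v \in V(H)$ keeps at least $k$ internally disjoint paths $u$--$h_i$--$v$. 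This gives both $k$-arc- and $k$-vertex-connectivity, and thus makes every edge of $H$ an edge of the auxiliary graph.

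\emph{Destroying non-edges.} The difficulty is that under this definition, removing the color $c_{uv}$ removes $u$ and $v$ themselves, so the pair counts as connected. I will therefore color so that the witness colors for a non-edge do \emph{not} lie on $u$ or $v$. For each non-edge $uv$ I use color $c_{uv}$ on a gadget rather than on the endpoints: all hubs adjacent to $u$ are routed so that $u$ reaches the hubs only through intermediate vertices whose colors include every $c_{uw}$ with $uw \notin E(H)$. Concretely, I subdivide each edge $u h_i$ by a vertex $s_{u,i}$ colored with $\{c_{uw} : uw \notin E(H)\}$. Then removing the single color $c_{uv}$ isolates both $u$ and $v$, so they are not connected, and this works already for $\ell \ge 1$. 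For an edge $uv$, any set of at most $\ell$ colors that avoids $f(u) \cup f(v) = \emptyset$ cuts off all of $u$'s subdivision vertices only if it contains some $c_{uw}$. That removal cannot be prevented, so I must add to each $u$ a second bundle of $k+\ell$ routes that avoid all non-edge colors except those private to specific partners.

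The main obstacle is exactly this balancing act. I would try letting the subdivision vertex on route $(u,i)$ and the one on route $(v,i)$ carry only $c_{uv}$-type colors, arranged so that $u$ and $v$ connect through a dedicated common neighbor $t_{uv}$ (present only when $uv \in E(H)$) via $k+\ell$ parallel fresh-colored paths. Every pair touched by a non-edge color would lose all its paths, while adjacent pairs would keep the private $t_{uv}$ paths. I would then check that auxiliary vertices ($s$, $t$, $h$) form no large cliques in the auxiliary graph, for example because each is separated from most others by a single fresh color; after padding $m$ this makes the components of size at least $m$ correspond to cliques of $H$.
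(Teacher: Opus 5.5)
Your NP-membership argument is fine, but the hardness part has a genuine gap. No construction is ever fixed, and the direction you settle on cannot work: vertices of $H$ uncolored, all witness colors placed on gadget vertices. Suppose $u,w,v\in V(H)$ satisfy $f(u)=f(w)=f(v)=\emptyset$, and $uw$, $wv$ are edges of the auxiliary graph $G_{[\mathrm{V}\ell\mid k\mathrm{A}]}$. Then $uv$ is an edge too. Any witness $C'$, $A'$ for the non-edge $uv$ as in~\ref{item:VlkA} leaves $w$ in place, since $w$ has no colors. It is also admissible for the pairs $uw$ and $wv$, so $u$--$w$ and $w$--$v$ dipaths survive in both directions and concatenate into $u$--$v$ and $v$--$u$ walks. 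This is the same argument as in Lemma~\ref{lemma:property_of_VlkA_single}. Hence the auxiliary graph induced on uncolored vertices is $P_3$-free. For the $[\mathrm{V}\ell\,|\,k\mathrm{A}]$ version with any $k$, and for both versions when $k=1$, it can never reproduce a \textsc{Clique} instance $H$ that contains an induced $P_3$.

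You can see this concretely in your last sketch. Remove $c_{uv}$ for a non-edge $uv$ whose endpoints have a common neighbour $w$. The route $u\to t_{uw}\to w\to t_{wv}\to v$ survives, as do $w$'s own hub routes, which do not carry $c_{uv}$. So the claim that every pair touched by a non-edge color loses all its paths is false.

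The obstruction shows what a witness color for the non-edge $uv$ must do. It must delete every vertex adjacent to both $u$ and $v$ in the auxiliary graph, yet it must avoid $u$ and $v$, because deleting an endpoint counts as connected. The missing idea is to take this literally and invert your first coloring:
\begin{itemize}
 \item give $c_{uv}$ to \emph{every vertex other than} $u$ and $v$;
 \item add no extra vertices;
 \item replace each edge of $H$ by $k$ parallel edges.
\end{itemize}
Then for any pair $u,v$, the only nonempty color set avoiding $f(u)\cup f(v)$ is $\{c_{uv}\}$, since every other color lies on $u$ or $v$. Removing $c_{uv}$ deletes everything except $u$ and $v$. These two are then strongly $k$-arc- and $k$-vertex-connected exactly when the $k$ parallel edges join them, i.e.\ when $uv\in E(H)$. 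For edges, $C'=\emptyset$ also leaves the $k$ parallel edges intact. So the auxiliary graph is exactly $H$ for both versions and all $k,\ell\ge 1$, and components of size at least $m$ are precisely maximal cliques of size at least $m$. This is the paper's proof.
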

\begin{proof}
 Let $k, \ell \in \mathbb{Z}_+$ be fixed. Obviously, these problems are in NP.
 
 Now we show that these problems are NP-hard by reducing the NP-complete problem \textsc{Clique}~\cite{karp1972reducibility} to them. Let $G$ be an arbitrary undirected graph. Let $G'$ be the undirected graph that can be obtained from $G$ by replacing each of its edges with $k$ parallel edges. Assign a color to any two vertices of $G$, and add this color to the set of colors of every other vertex in $G'$; so altogether we use $\binom{n}{2}$ colors and every vertex has $\binom{n}{2} - (n-1)$ colors in its set of colors. For an example, see Figure~\ref{fig:example_VlkA_and_VlkV_complexity_construction}.
 
 \begin{figure}[H]
 \centering
 \begin{tikzpicture}[scale=1.4]
  \tikzstyle{vertex}=[draw,circle,fill,minimum size=6,inner sep=0]
 
  \tikzstyle{vertex_red}=[draw, thick, red!50!black, shape=rectangle, fill=red!50, minimum size=8pt, inner sep=0]
  \tikzstyle{vertex_blue}=[draw, thick, blue!50!black, shape=regular polygon, regular polygon sides=3, fill=blue!50, minimum size=12pt, inner sep=0]
  \tikzstyle{vertex_green}=[draw, green!50!black, kite, fill=green!50, minimum size=11pt, inner sep=0.1pt]
  \tikzstyle{vertex_yellow}=[draw, yellow!50!black, shape=regular polygon, regular polygon sides=5, fill=yellow!50, minimum size=11pt, inner sep=0]
  \tikzstyle{vertex_orange}=[draw, orange!50!black, shape=star, star points=5, fill=orange!50, minimum size=11pt, inner sep=0]
  \tikzstyle{vertex_purple}=[draw, violet!50!black, shape=trapezium, trapezium angle=67.5, fill=violet!50, minimum size=7pt, inner sep=0.1pt]
 
  \node at (-1.125,1.25) {$G$};
  
  \node[vertex] (v1) at (45:1) [label=above right:$v_1$] {};
  \node[vertex] (v2) at (135:1) [label=above left:$v_2$] {};
  \node[vertex] (v3) at (225:1) [label=below left:$v_3$] {};
  \node[vertex] (v4) at (315:1) [label=below right:$v_4$] {};
  
  \draw[very thick] (v1) -- (v2) -- (v3) -- (v4) -- (v1);
  
  \begin{scope}[shift={(5,0)}]
  \node at (-1.375,1.375) {$G'$};
 
  \node[matrix, draw, shape=circle, thick, minimum size=35pt, inner sep=0pt] (w1) at (45:1) [label=right:$v_1$]
  { & \node[vertex_yellow, minimum size=7.5pt] {}; & \\[2pt]
   \node[vertex_orange, minimum size=7.5pt] {}; &  & \hspace{-1pt} \node[vertex_purple, minimum size=5pt] {}; \\ };

  \node[matrix, draw, shape=circle, thick, minimum size=35pt, inner sep=0pt] (w2) at (135:1) [label=left:$v_2$]
  { & \node[vertex_blue, minimum size=9pt] {}; & \\[2pt]
   \node[vertex_green, minimum size=7.5pt] {}; &  & \hspace{-1pt} \node[vertex_purple, minimum size=5pt] {}; \\ };

  \node[matrix, draw, shape=circle, thick, minimum size=35pt, inner sep=0pt] (w3) at (225:1) [label=left:$v_3$]
  { & \node[vertex_red, minimum size=6pt] {}; & \\[2pt]
   \node[vertex_green, minimum size=7.5pt] {}; &  & \node[vertex_orange, minimum size=7.5pt] {}; \\ };

  \node[matrix, draw, shape=circle, thick, minimum size=35pt, inner sep=0pt] (w4) at (315:1) [label=right:$v_4$]
  { & \node[vertex_red, minimum size=6pt] {}; & \\[2pt]
   \node[vertex_blue, minimum size=9pt] {}; &  & \node[vertex_yellow, minimum size=7.5pt] {}; \\ };

  \draw[very thick] (w1) to [bend left=10] (w2);
  \draw[very thick] (w1) to [bend right=10] (w2);

  \draw[very thick] (w2) to [bend left=10] (w3);
  \draw[very thick] (w2) to [bend right=10] (w3);

  \draw[very thick] (w3) to [bend left=10] (w4);
  \draw[very thick] (w3) to [bend right=10] (w4);

  \draw[very thick] (w4) to [bend left=10] (w1);
  \draw[very thick] (w4) to [bend right=10] (w1);
  \end{scope}
 \end{tikzpicture}
 \caption{The vertex-colored undirected graph $G'$ constructed in the proof of Theorem~\ref{thm:complexity_of_VlkA_and_VlkV} to $G$ in the case $k = 2$ (and with $\ell \in \mathbb{Z}_+$ being arbitrary). We assign the color red (denoted by squares) to the vertices $v_1$ and $v_2$; thus, this color appears in the sets of colors of $v'_3$ and $v'_4$ in $G'$. Blue (denoted by triangles) is assigned to the vertices $v_1$ and $v_3$, green (denoted by deltoids) to the vertices $v_1$ and $v_4$, yellow (denoted by pentagons) to the vertices $v_2$ and $v_3$, and orange (denoted by stars) to the vertices $v_2$ and $v_4$, and purple (denoted by trapezoids) to the vertices $v_3$ and $v_4$. Then, for example, the set of colors of $v'_1$ consists of yellow, orange, and purple.}
 \label{fig:example_VlkA_and_VlkV_complexity_construction}
 \end{figure}

 It is not difficult to see that $G'_{[\mathrm{V} \ell \mid k \mathrm{A}]} = G'_{[\mathrm{V} \ell \mid k \mathrm{V}]} = G$, thus the following are equivalent:
 \begin{itemize}[topsep=3pt, itemsep=-1.5pt]
  \item[--] $G$ has a clique of size at least~$m$,
  \item[--] $G'$ has a $[\mathrm{V} \ell \, | \, k \mathrm{A}]$-connected component of size at least~$m$, and
  \item[--] $G'$ has a $[\mathrm{V} \ell \, | \, k \mathrm{V}]$-connected component of size at least~$m$.
 \end{itemize}
 Therefore, the problems \textsc{$[\mathrm{V} \ell \, | \, k \mathrm{A}]$-} and \textsc{$[\mathrm{V} \ell \, | \, k \mathrm{V}]$-ConnectedComponents} are indeed NP-complete.
\end{proof}

Note that in the proof of Theorem~\ref{thm:complexity_of_VlkA_and_VlkV}, the size of the sets of colors is a polynomial in $n$. An interesting question is the case when the sets of colors of the vertices have constant sizes. In Sections~\ref{section:VlkA_old_components} and~\ref{section:VlkV_old_components}, we consider the case when every vertex has exactly one color in its set of colors. The case when the sets of colors of the vertices have arbitrary constant sizes remains open.

\subsubsection[\texorpdfstring{The $[\mathrm{V} \ell \, | \, k \mathrm{A}]$}{}-connected components when every vertex is single-colored]{The \texorpdfstring{$\boldsymbol{[\mathrm{V} \ell \, | \, k \mathrm{A}]}$}{}-connected components when every vertex is single-colored} \label{section:VlkA_old_components}

Even though generally the auxiliary graph $G_{[\mathrm{V} \ell \mid k \mathrm{A}]}$ can be arbitrary (see the proof of Theorem~\ref{thm:complexity_of_VlkA_and_VlkV}), the following lemma describes some special structure of $G_{[\mathrm{V} \ell \mid k \mathrm{A}]}$ if every vertex is single-colored. Since the proof uses similar ideas to that of Lemma~\ref{lemma:property_of_AlkV}, we defer it to the \hyperref[proof:property_of_VlkA_single]{Appendix}.

\begin{lemma} \label{lemma:property_of_VlkA_single}
 Let $k, \ell \in \mathbb{Z}_+$, let $G$ be a digraph, let $C$ be a finite color set, and let $f: \ V(G) \to C$. Then the auxiliary graph $G_{[\mathrm{V} \ell \mid k \mathrm{A}]}$ is $\overline{(\ell + 2) K_2}$-free.
\end{lemma}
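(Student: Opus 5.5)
I will argue by contradiction, following the proof of Lemma~\ref{lemma:property_of_AlkV}. Let $p = \ell + 2$ and suppose $G_{[\mathrm{V} \ell \mid k \mathrm{A}]}$ contains an induced $\overline{pK_2}$ on vertices $v_1, \ldots, v_{2p}$, where the only non-adjacent pairs are $\{v_{2i-1}, v_{2i}\}$ for $i \in [p]$. Since $v_{2p-1}v_{2p}$ is not an edge, observation~\ref{item:VlkA} yields sets $C' \subseteq C \setminus \big( f(v_{2p-1}) \cup f(v_{2p}) \big)$ with $|C'| \le \ell$ and $A' \subseteq A(G)$ with $|A'| \le k-1$. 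After removing $C'$ and $A'$ from $G$, both $v_{2p-1}$ and $v_{2p}$ survive, but one of the two directed connections between them is destroyed.

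The key step is to find a \emph{middle vertex} $v_j$ with $j \le 2p-2$ whose single color $f(v_j)$ does not lie in $C'$. Because every vertex has exactly one color, each color in $C'$ can exclude only the vertices of that color. Thus colors are the only obstacle here, in contrast to Lemma~\ref{lemma:property_of_AlkV}, where removed vertices played that role.

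The first difficulty is that several of $v_1, \ldots, v_{2p-2}$ may share a color. To handle this, I will use the non-edges inside the other pairs. For each $i \in [p-1]$, consider the pair $\{v_{2i-1}, v_{2i}\}$. If both vertices of this pair had colors in $C'$, I would need to rule that out. The cleaner route is to count pairs: there are $p-1 = \ell+1$ disjoint pairs and only $|C'| \le \ell$ colors. If two vertices from different pairs share a color, I need extra structure, so I will try to choose the counting carefully.

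Here is the observation I would use. Suppose $f(v_{2i-1}) = f(v_{2i}) = c$. Then after removing any colors other than $c$, these two vertices both survive. If instead $c \in C'$, both vertices vanish, which is harmless. I expect the pure pigeonhole argument to work as follows: I claim some pair $i \in [p-1]$ has a vertex whose color is outside $C'$. Otherwise all $2p-2 = 2\ell+2$ vertices have colors in $C'$, which has at most $\ell$ colors. Pigeonhole alone is not contradictory here, so I must exploit the adjacencies among those vertices. I will use the fact that $v_{2i-1}$ and $v_{2i}$ are non-adjacent even though both are adjacent to the rest, which is the main obstacle.

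My plan for that obstacle is to show that the vertices of a single color class cannot contain a non-adjacent pair $v_{2i-1}, v_{2i}$ together with many common neighbours. If $f(v_{2i-1}) = f(v_{2i})$, their separating set $C''$ avoids that color, and I then look for a common neighbour whose color is not in $C''$. This sets up an induction or an iterated choice.

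If that gets messy, I will fall back on a simpler selection. Choose any $v_j$, $j \le 2p-2$, with $f(v_j) \notin C'$. This is guaranteed if the at most $\ell$ colors of $C'$ cover at most $\ell$ of the pairs. I will aim to prove that covering an entire pair by one color is impossible, using the non-adjacency of the pair and a common neighbour of a different color.

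Once such a $v_j$ exists, the conclusion follows the earlier argument. The vertex $v_j$ is $[\mathrm{V}\ell \mid k\mathrm{A}]$-adjacent to both $v_{2p-1}$ and $v_{2p}$, and after removing $C'$ all three vertices survive. Applying observation~\ref{item:VlkA} to the pairs $(v_{2p-1}, v_j)$ and $(v_j, v_{2p})$, we see that removing $C'$ and $A'$ (with $|A'| \le k-1$) leaves dipaths in both directions through $v_j$. This gives directed walks between $v_{2p-1}$ and $v_{2p}$ in both directions, a contradiction.
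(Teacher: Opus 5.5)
Your setup and your closing step (concatenating dipaths through a surviving common neighbour) are correct. The gap is the middle step: you never prove that a surviving middle vertex exists, and your fallback would not establish it. First, showing that no pair $\{v_{2i-1},v_{2i}\}$ is monochromatic does not imply that at most $\ell$ colors cover at most $\ell$ pairs. For $\ell=2$, take $C'=\{a,b\}$ and color the three pairs among $v_1,\ldots,v_6$ as $(a,b),(a,b),(a,b)$: no pair is monochromatic, yet every one of the six vertices is removed. Second, your suggested proof of the sub-claim looks for a common neighbour of a \emph{different} color. Such a color may well lie in the separating set $C''$, so nothing guarantees that this neighbour survives.

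The missing idea is to look for a common neighbour of the \emph{same} color, and to switch to a different non-adjacent pair rather than insisting on a middle vertex for $(v_{2p-1},v_{2p})$. Assume, as you may, that all of $v_1,\ldots,v_{2\ell+2}$ have colors in $C'$. If every color class among them lay inside a single pair, there would be at most $2|C'|\le 2\ell<2\ell+2$ of them. Hence two of them from different pairs share a color, say $f(v_1)=f(v_3)$.

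Now apply observation~\ref{item:VlkA} to the non-edge $v_1v_2$. It gives $C''\subseteq C\setminus\{f(v_1),f(v_2)\}$ with $|C''|\le\ell$ and $A''$ with $|A''|\le k-1$. Since $f(v_3)=f(v_1)\notin C''$, the vertex $v_3$ survives. It is adjacent in the auxiliary graph to both $v_1$ and $v_2$, so removing $C''$ and $A''$ still leaves directed walks from $v_1$ to $v_2$ and from $v_2$ to $v_1$ via $v_3$, a contradiction.

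This is exactly where single-coloredness is used. A vertex sharing $v_1$'s unique color cannot be deleted by any color set that spares $v_1$. This is the property your argument needs, and a neighbour of a different color does not have it.
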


By Theorem~\ref{thm:pK2_free_listing_max_clique}, we conclude that for any fixed $k, \ell \in \mathbb{Z}_+$, the $[\mathrm{V} \ell \, | \, k \mathrm{A}]$-connected components can be found in polynomial time if each vertex is single-colored.

\begin{theorem} \label{thm:complexity_of_VlkA_single}
For any $k, \ell \in \mathbb{Z}_+$, the following problem can be solved in polynomial time.

 \smallskip
 
 \noindent {\bf \textsc{$\boldsymbol{[\mathrm{V} \ell \, | \, k \mathrm{A}]}$-ConnectedComponents--EachVertexSingleColored}}

 \smallskip
 
 \noindent \textit{Instance:} a digraph $G$, a finite color set $C$, a function $f \colon V(G) \to C$, and $m \in \mathbb{Z}_+$.
 
 \noindent \textit{Question:} is it true that $G$ has a $[\mathrm{V} \ell \, | \, k \mathrm{A}]$-connected component of size at least~$m$?

 \smallskip
 
 \noindent Moreover, in this single-colored case, all the $[\mathrm{V} \ell \, | \, k \mathrm{A}]$-connected components of $G$ can be found in polynomial time.
\end{theorem}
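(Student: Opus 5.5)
The plan follows the pattern of Theorems~\ref{thm:complexity_of_AlkA} and~\ref{thm:complexity_of_AlkV}: build the auxiliary graph, use its structural property, and list its maximal cliques with Theorem~\ref{thm:pK2_free_listing_max_clique}.

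\emph{Step 1: construct $G_{[\mathrm{V} \ell \mid k \mathrm{A}]}$.} For every ordered pair $s \ne t$ and every $C' \subseteq C \setminus \{f(s), f(t)\}$ with $|C'| \le \ell$, we delete $C'$ and run a maximum $s$-$t$ flow with unit arc capacities. The pair is adjacent exactly when every such flow value is at least $k$ in both directions. By observation~\ref{item:VlkA}, this is exactly the adjacency rule of the auxiliary graph. Since $|C| \le n$ may be assumed (unused colors are irrelevant), there are $O(n^{\ell})$ color sets to check. So for fixed $\ell$ the construction makes polynomially many flow computations.

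\emph{Step 2: list the components.} By Lemma~\ref{lemma:property_of_VlkA_single}, the auxiliary graph is $\overline{(\ell+2)K_2}$-free. Theorem~\ref{thm:pK2_free_listing_max_clique} with $p=\ell+2$ therefore lists all its maximal cliques in $O(n^{2\ell+5})$ time. As noted before Theorem~\ref{thm:complexity_of_VlkA_and_VlkV}, the vertex sets of the $[\mathrm{V} \ell \, | \, k \mathrm{A}]$-connected components are exactly these maximal cliques. Isolated vertices of the auxiliary graph are single-vertex maximal cliques, which matches the convention that such a vertex forms a component on its own. Only $O(n^{2\ell+2})$ cliques are produced, so we output all of them and, for the decision version, compare the largest size with $m$.

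There is no real obstacle here, because the structural work sits in Lemma~\ref{lemma:property_of_VlkA_single}. The one point needing care is that the polynomial bound depends on $\ell$ (and on $k$ only through the flow checks) being fixed. Both the $O(n^\ell)$ enumeration of color sets and the exponent $2p+1$ in Theorem~\ref{thm:pK2_free_listing_max_clique} are polynomial only for constant $\ell$.
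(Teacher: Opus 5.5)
Your proposal is correct and follows the paper's own argument: build $G_{[\mathrm{V} \ell \mid k \mathrm{A}]}$ by max-flow checks over all $C' \subseteq C \setminus \{f(s), f(t)\}$ with $|C'| \le \ell$, use Lemma~\ref{lemma:property_of_VlkA_single} to get $\overline{(\ell+2)K_2}$-freeness, and list its maximal cliques (which are exactly the components) via Theorem~\ref{thm:pK2_free_listing_max_clique}. Your side remark is also right: only $\ell$ must be fixed, because $k$ is just a threshold on the computed flow values.
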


\subsubsection[\texorpdfstring{The $[\mathrm{V} \ell \, | \, k \mathrm{V}]$}{}-connected components when every vertex is single-colored]{The \texorpdfstring{$\boldsymbol{[\mathrm{V} \ell \, | \, k \mathrm{V}]}$}{}-connected components when every vertex is single-colored} \label{section:VlkV_old_components}

Similarly to $G_{[\mathrm{V} \ell \mid k \mathrm{A}]}$, the auxiliary graph $G_{[\mathrm{V} \ell \mid k \mathrm{V}]}$ has some special structure if every vertex is single-colored. Again, we defer the proof to the \hyperref[proof:property_of_VlkV_single]{Appendix}.

\begin{lemma} \label{lemma:property_of_VlkV_single}
 Let $k, \ell \in \mathbb{Z}_+$, let $G$ be a digraph, let $C$ be a finite color set, and let $f \colon V(G) \to C$. Then the auxiliary graph $G_{[\mathrm{V} \ell \mid k \mathrm{V}]}$ is $\overline{p K_2}$-free, where $p \colonequals \big\lceil (k \ell + k + \ell + 2)/2 \big\rceil$.
\end{lemma}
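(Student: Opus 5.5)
We argue by contradiction, following the proof of Lemma~\ref{lemma:property_of_AlkV}. Suppose $G_{[\mathrm{V} \ell \mid k \mathrm{V}]}$ contains an induced $\overline{pK_2}$ on vertices $v_1, \ldots, v_{2p}$, where $v_{2i-1}v_{2i}$ is the only missing edge within the $i$-th pair. Since $v_{2p-1}v_{2p}$ is a non-edge, observation~\ref{item:VlkV} gives a separator: there exist $C' \subseteq C \setminus \big( f(v_{2p-1}) \cup f(v_{2p}) \big)$ with $|C'| \le \ell$, an arc set $A'$, and $V' \subseteq V(G) \setminus \{v_{2p-1}, v_{2p}\}$ with $|A'| + |V'| \le k-1$. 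After deleting $C'$, $A'$ and $V'$, there is no $v_{2p-1}$-$v_{2p}$ dipath or no $v_{2p}$-$v_{2p-1}$ dipath.

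The goal is to find an index $j \le 2p-2$ such that $v_j$ survives both the deletion of $C'$ and of $V'$, and such that $v_j$ is still strongly connected to both $v_{2p-1}$ and $v_{2p}$ after the removal. Concatenating the resulting walks through $v_j$ then contradicts the choice of the separator.

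\textbf{Counting bad vertices.} Among $v_1, \ldots, v_{2p-2}$:
\begin{itemize}
\item at most $k-1$ lie in $V'$;
\item since every vertex is single-colored and $|C'| \le \ell$, deleting $C'$ removes exactly the vertices whose unique color lies in $C'$.
\end{itemize}
The difficulty is that many $v_j$ could share a color in $C'$. Here the non-edges are used. Consider two vertices $v_{2i-1}, v_{2i}$ of the same pair. They are not $[\mathrm{V}\ell \mid k\mathrm{V}]$-connected, so there is a separator $C''$ of size at most $\ell$ avoiding both of their colors. I expect to show that each color of $C'$ can eliminate only a bounded number of the $v_j$, or else transfer the argument to a different pair.

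\textbf{Main obstacle.} The bound must match $2p-2 \ge k\ell + k + \ell - 1$, which suggests the following accounting:
\begin{itemize}
\item $k-1$ vertices for $V'$;
\item for each of the $\ell$ colors in $C'$, roughly $k+1$ of the $v_j$ can carry that color before a contradiction arises.
\end{itemize}

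The key claim would be: if $v_a$ and $v_b$ both have color $c \in C'$ and lie in different pairs, then they are adjacent in the auxiliary graph. Moreover, $v_{2p-1}$ and $v_{2p}$ are connected to them after removing $C' \setminus \{c\}$ together with $A'$ and $V'$. Since $c \notin f(v_{2p-1}) \cup f(v_{2p})$, applying the definition to the pair $(v_{2p-1}, v_a)$ with the color set $C' \setminus \{c\}$ keeps $v_a$ alive. Routing through color-$c$ vertices then gives paths that must themselves be cut by $A' \cup V'$.

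I would try to make this precise as follows. Assume $C'$ kills more than $(k+1)\ell$ of the candidates. Then pick a color $c \in C'$ owning at least $k+1$ of the $v_j$. Apply the separator with $C' \setminus \{c\}$ and use the fact that $k$-vertex-connectivity from $v_{2p-1}$ to $v_a$ survives the deletion of $A' \cup V'$. The delicate part is checking that the extra vertices still sit on paths avoiding color $c$, or else that the contradiction comes from the pair $\{v_{2i-1}, v_{2i}\}$ instead. The precise constant $\lceil (k\ell+k+\ell+2)/2 \rceil$ should emerge from this pigeonhole step, and it is the step I expect to need the most care.
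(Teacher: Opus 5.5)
Your setup and closing step are correct. Any $v_j$ with $j \le 2p-2$ that survives both $C'$ and $V'$ is adjacent to $v_{2p-1}$ and $v_{2p}$ in the auxiliary graph, so walks through $v_j$ contradict the separator. Your target of at most $k+1$ candidates per color is also the right accounting.

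The gap is that this per-color bound is never proved, and the mechanism you propose for it cannot work. Suppose you delete only $C' \setminus \{c\}$ together with $A'$ and $V'$, and route $v_{2p-1}$ and $v_{2p}$ to a color-$c$ vertex $v_a$. Every resulting walk passes through $v_a$, which has color $c \in C'$ and is therefore deleted by the separator $(C',A',V')$. These walks are cut by $C'$ itself, not by $A' \cup V'$ as you hope, so they say nothing about $G$ with $C'$, $A'$, $V'$ removed. No contradiction with the choice of that separator can come from them.

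The missing idea is that the bound on a color class has nothing to do with $C'$. It comes from the separator of the pair containing a vertex of that color, the option you mention only in passing.

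Suppose a color $c$ is carried by at least $k+2$ of $v_1,\dots,v_{2p}$. Let $v_a$ be one of them and $v_{a'}$ its partner in the induced $\overline{pK_2}$.
\begin{enumerate}
\item The separator $(C'',A'',V'')$ of the non-edge $v_a v_{a'}$ has $C'' \subseteq C \setminus \{f(v_a), f(v_{a'})\}$. So $c \notin C''$, and every color-$c$ vertex survives the removal of $C''$.
\item Discard $v_a$, possibly $v_{a'}$, and at most $k-1$ vertices of $V''$. This still leaves a color-$c$ vertex $v_b$ lying in a different pair.
\item This $v_b$ is adjacent to both $v_a$ and $v_{a'}$, and walks through $v_b$ contradict the separator.
\end{enumerate}
Hence each color occurs on at most $k+1$ of the $v_j$. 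So at most $(k+1)\ell + (k-1) = k\ell+k+\ell-1$ of $v_1,\dots,v_{2p-2}$ are killed by $C'$ or $V'$.

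To get a surviving $v_j$ you need $2p-2 \ge k\ell+k+\ell$, which does follow from $2p \ge k\ell+k+\ell+2$. The bound $2p-2 \ge k\ell+k+\ell-1$ that you wrote only matches the kill count and does not guarantee a survivor.

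The paper arranges the same argument slightly differently. It first shows that all of $\{v_1,\dots,v_{2p-2}\}\setminus V'$ must have colors in $C'$, then obtains a color class of size at least $k+2$ by pigeonhole, and then applies the above argument to the pair $\{v_1,v_2\}$. The decisive step is the same switch to the second separator.
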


By Theorem~\ref{thm:pK2_free_listing_max_clique}, we conclude that for any fixed $k, \ell \in \mathbb{Z}_+$, the $[\mathrm{V} \ell \, | \, k \mathrm{V}]$-connected components can be found in polynomial time if each vertex is single-colored.

\begin{theorem}\label{thm:complexity_of_VlkV_single}
For any $k, \ell \in \mathbb{Z}_+$, the following problem can be solved in polynomial time.

 \smallskip
 
 \noindent {\bf \textsc{$\boldsymbol{[\mathrm{V} \ell \, | \, k \mathrm{V}]}$-ConnectedComponents--EachVertexSingleColored}}

 \smallskip
 
 \noindent \textit{Instance:} a digraph $G$, a finite color set $C$, a function $f \colon V(G) \to C$, and $m \in \mathbb{Z}_+$.
 
 \noindent \textit{Question:} is it true that $G$ has a $[\mathrm{V} \ell \, | \, k \mathrm{V}]$-connected component of size at least~$m$?

 \smallskip

 \noindent Moreover, in this single-colored case, all the $[\mathrm{V} \ell \, | \, k \mathrm{V}]$-connected components of $G$ can be found in polynomial time.
\end{theorem}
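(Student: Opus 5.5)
The plan follows the same pattern as Theorems~\ref{thm:complexity_of_AlkV} and~\ref{thm:complexity_of_VlkA_single}: reduce the problem to listing maximal cliques in the auxiliary graph $G_{[\mathrm{V} \ell \mid k \mathrm{V}]}$.

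\emph{Step 1: build the auxiliary graph.} Fix $k$ and $\ell$. For every pair $s \ne t$ and every $C' \subseteq C \setminus \big( f(s) \cup f(t) \big)$ with $|C'| \le \ell$, I remove $C'$ from $G$ and run a maximum flow computation with vertex capacities (the standard vertex-splitting construction) in both directions. This checks whether at least $k$ internally vertex-disjoint dipaths remain. By observation~\ref{item:VlkV}, $s$ and $t$ are adjacent in $G_{[\mathrm{V} \ell \mid k \mathrm{V}]}$ exactly when every such check succeeds. There are at most $n^2 \cdot O(|C|^{\ell})$ flow computations, which is polynomial because $\ell$ is fixed.

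\emph{Step 2: bound the number of components.} By the earlier remark, the $[\mathrm{V} \ell \, | \, k \mathrm{V}]$-connected components of $G$ are exactly the vertex sets of the maximal cliques of $G_{[\mathrm{V} \ell \mid k \mathrm{V}]}$. Since $f$ maps into $C$ (each vertex is single-colored), Lemma~\ref{lemma:property_of_VlkV_single} applies. It says $G_{[\mathrm{V} \ell \mid k \mathrm{V}]}$ is $\overline{pK_2}$-free with $p = \lceil (k\ell+k+\ell+2)/2 \rceil$, and this $p$ is a constant depending only on $k$ and $\ell$.

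\emph{Step 3: list them.} Theorem~\ref{thm:pK2_free_listing_max_clique} lists all maximal cliques of a $\overline{pK_2}$-free graph in $O(n^{2p+1})$ time, which is polynomial for fixed $p$. This yields all components. To answer the decision question, I compare the size of each listed clique with $m$; the number of cliques is $O(n^{2p-2})$, so this check is cheap. Isolated vertices of the auxiliary graph are single-vertex maximal cliques, so they appear in the list as single-vertex components, consistent with the convention stated in the paper.

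All the real difficulty sits in Lemma~\ref{lemma:property_of_VlkV_single}, which is assumed here. The only points needing care are the following. First, the dependence on $|C|$ in Step 1 stays polynomial because we enumerate only color subsets of size at most $\ell$. Second, the flow check must use vertex-disjointness, so the vertex-splitting construction is required. Third, the exponent $2p+1$ must not depend on the input, which holds since $k$ and $\ell$ are fixed.
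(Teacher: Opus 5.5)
Your proposal is correct and follows the paper's argument: the paper likewise builds $G_{[\mathrm{V} \ell \mid k \mathrm{V}]}$ in polynomial time with max-flow checks over all color sets of size at most $\ell$ that avoid the endpoints' colors, and identifies the components with the maximal cliques of this graph. It then combines the $\overline{pK_2}$-freeness from Lemma~\ref{lemma:property_of_VlkV_single} with Theorem~\ref{thm:pK2_free_listing_max_clique} to list them, and your remarks on vertex splitting, the polynomial dependence on $|C|$, and isolated vertices fill in details the paper leaves implicit.
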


\subsection{Maximal vertex-color-avoiding strongly connected subgraphs} \label{section:Vlk_new_components}

In this section, we define maximal vertex-color-avoiding strongly connected subgraphs and prove that finding the maximal $[\mathrm{V} \ell \, | \, k \mathrm{A}]$- and the maximal $[\mathrm{V} \ell \, | \, k \mathrm{V}]$-connected subgraphs of a vertex-colored digraph is NP-hard. On the other hand, in Sections~\ref{section:VlkA_new_components} and~\ref{section:VlkV_new_components}, we show that the same problem in the setting where every vertex is single-colored can be solved in polynomial time.

Similarly to the case of $[\mathrm{A} \ell \, | \, k \mathrm{A}]$- or $[\mathrm{A} \ell \, | \, k \mathrm{V}]$-connectivity, it is not difficult to see that even though some set $W \subseteq V(G)$ of vertices is the vertex set of a $[\mathrm{V} \ell \, | \, k \mathrm{A}]$- or $[\mathrm{V} \ell \, | \, k \mathrm{V}]$-connected component of a vertex-colored digraph $G$, the subgraph $G[W]$ is not necessarily $[\mathrm{V} \ell \, | \, k \mathrm{A}]$- or $[\mathrm{V} \ell \, | \, k \mathrm{V}]$-connected.

In the following, we study such maximal vertex sets that span a vertex-color-avoiding connected subgraph. For a set $W \subseteq V(G)$ of vertices, the subgraph $G[W]$ is called a \emph{maximal vertex-$\ell$-color-avoiding strongly $k$-arc-connected subgraph}, or for short a \emph{maximal $[\mathrm{V} \ell \, | \, k \mathrm{A}]$-connected subgraph} (and \emph{maximal vertex-$\ell$-color-avoiding strongly $k$-vertex-connected subgraph}, or for short \emph{maximal $[\mathrm{V} \ell \, | \, k \mathrm{V}]$-connected subgraph}, respectively), of a vertex-colored digraph $G$, if it is inclusion-wise maximal for the property of being $[\mathrm{V} \ell \, | \, k \mathrm{A}]$-connected (or $[\mathrm{V} \ell \, | \, k \mathrm{V}]$-connected).

Similarly to finding vertex-color-avoiding strongly connected components, the problem of finding maximal vertex-color-avoiding strongly connected subgraphs is NP-hard. The proof is deferred to the \hyperref[proof:complexity_of_VlkA_and_VlkV_2]{Appendix}.

\begin{theorem} \label{thm:complexity_of_VlkA_and_VlkV_2}
 The following problems are NP-complete for any $k, \ell \in \mathbb{Z}_+$, even for undirected graphs.

 \smallskip
 
 \noindent {\bf \scshape \textsc{Maximal-$\boldsymbol{[\mathrm{V} \ell \, | \, k \mathrm{A}]}$-ConnectedSubgraphs}} \\
 \textit{Instance:} a digraph $G$, a finite color set $C$, a function $f \colon V(G) \to 2^C$, and $m \in \mathbb{Z}_+$. \\
 \textit{Question:} is it true that $G$ has a maximal $[\mathrm{V} \ell \, | \, k \mathrm{A}]$-connected subgraph on at least $m$ vertices?

 \medskip
 
 \noindent {\bf \scshape \textsc{Maximal-$\boldsymbol{[\mathrm{V} \ell \, | \, k \mathrm{V}]}$-ConnectedSubgraphs}} \\
 \textit{Instance:} a digraph $G$, a finite color set $C$, a function $f \colon V(G) \to 2^C$, and $m \in \mathbb{Z}_+$. \\
 \textit{Question:} is it true that $G$ has a maximal $[\mathrm{V} \ell \, | \, k \mathrm{V}]$-connected subgraph on at least $m$ vertices?
\end{theorem}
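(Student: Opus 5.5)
We reduce from \textsc{Clique}, adapting the construction in the proof of Theorem~\ref{thm:complexity_of_VlkA_and_VlkV}. Membership in NP is immediate: a certificate is a vertex set $W$ with $|W| \ge m$; we check in polynomial time (via max-flow over all $C' \subseteq C$ with $|C'| \le \ell$) that $G[W]$ is color-avoiding strongly connected. We also check maximality in polynomial time: for every $v \notin W$, verify that $G[W \cup \{v\}]$ is not. This single-vertex check may be insufficient in general, since the property need not be monotone. To sidestep this, the NP membership should instead use the certificate ``there is a color-avoiding strongly connected $W$ with $|W| \ge m$''. Any such $W$ is contained in some maximal one, whose size is at least $m$, so the two questions are equivalent.

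For hardness, take an undirected graph $H$ on $n$ vertices and build $G'$ as before: replace every edge by $k$ parallel edges and, for every pair $\{v_i,v_j\}$, introduce a color $c_{ij}$ assigned to all vertices other than $v_i,v_j$. We have already established that $G'_{[\mathrm{V}\ell\mid k\mathrm{A}]} = G'_{[\mathrm{V}\ell\mid k\mathrm{V}]} = H$. The new point is the induced-subgraph version. Fix $W \subseteq V$ with $|W|\ge 2$ and $u,v \in W$. Removing the color $c_{uv}$, together with any other colors, deletes every vertex of $W \setminus\{u,v\}$, leaving only the $k$ parallel edges $uv$ if $uv \in E(H)$. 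Removing any single color $c_{xy}$ with $\{x,y\}\ne\{u,v\}$ deletes $u$ or $v$. Hence in $G'[W]$, $u$ and $v$ are $[\mathrm{V}\ell\mid k\mathrm{A}]$- and $[\mathrm{V}\ell\mid k\mathrm{V}]$-connected if and only if $uv\in E(H)$. This holds for every $\ell\ge 1$ and uses the fact that $k$ parallel edges give both $k$ arc-disjoint and $k$ internally vertex-disjoint paths. Consequently, $G'[W]$ is color-avoiding strongly connected if and only if $W$ is a clique of $H$, for $|W|\ge 2$.

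It follows that the maximal color-avoiding strongly connected subgraphs on at least two vertices are exactly the maximal cliques of $H$ of size at least $2$. We then take $m \ge 2$: the cases $m \le 2$ of \textsc{Clique} are trivial, and single-vertex subgraphs without loops are never connected in this sense. Thus $H$ has a clique of size at least $m$ if and only if $G'$ has a maximal $[\mathrm{V}\ell\mid k\mathrm{A}]$- (respectively $[\mathrm{V}\ell\mid k\mathrm{V}]$-) connected subgraph on at least $m$ vertices. The construction uses $\binom n2$ colors and is polynomial. The main care point is the case analysis above: we must confirm that removing $c_{uv}$ is the only ``dangerous'' removal within $G'[W]$, and that when $u$ or $v$ is deleted the definition counts the pair as connected.
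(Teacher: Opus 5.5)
Your proof is correct and follows the paper's argument: the same reduction from \textsc{Clique} via the graph $G'$ of Theorem~\ref{thm:complexity_of_VlkA_and_VlkV}, with the key equivalence that for $|W| \ge 2$ the subgraph $G'[W]$ is color-avoiding strongly connected if and only if $W$ is a clique. Your explicit case analysis (only $C' = \{c_{uv}\}$ can separate $u$ and $v$), your handling of small $m$ via loopless single vertices, and your NP-membership argument via the equivalence of ``some such subgraph on at least $m$ vertices'' with ``some maximal one'' fill in details the paper leaves as ``not difficult to see'' and ``obviously in NP''.
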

 
In Sections~\ref{section:VlkA_new_components} and~\ref{section:VlkV_new_components}, we consider the setting when every vertex is single-colored. In this special case, we present a polynomial-time algorithm for finding the maximal vertex-color-avoiding connected subgraphs of a given digraph for any fixed $k, \ell \in \mathbb{Z}_+$. To do so, we use the algorithms that can find the vertex-color-avoiding connected components when every vertex is single-colored as subroutines. Let \textbf{$\boldsymbol{[\mathrm{V} \ell \, | \, k \mathrm{A}]}$-Connected\-Components}$(G,f)$ and \textbf{$\boldsymbol{[\mathrm{V} \ell \, | \, k \mathrm{V}]}$-Connected\-Components}$(G,f)$ denote polynomial-time algorithms which respectively return the vertex sets of the $[\mathrm{V} \ell \, | \, k \mathrm{A}]$- and $[\mathrm{V} \ell \, | \, k \mathrm{V}]$-connected components of a digraph $G$ with vertex-coloring $f \colon V(G) \to C$, where $C$ is a finite color set. Note that by Theorems~\ref{thm:complexity_of_VlkA_single} and~\ref{thm:complexity_of_VlkV_single}, such polynomial-time algorithms exist. For ease of notation, let $f_W$ denote the restriction of $f$ to $W$ for any $W \subseteq V(G)$.

\subsubsection[\texorpdfstring{Maximal $[\mathrm{V} \ell \, | \, k \mathrm{A}]$}{}-connected subgraphs when every vertex is single-colored]{\texorpdfstring{Maximal $\boldsymbol{[\mathrm{V} \ell \, | \, k \mathrm{A}]}$}{}-connected subgraphs when every vertex is single-colored} \label{section:VlkA_new_components}

Now we consider the problem of finding the maximal $[\mathrm{V} \ell \, | \, k \mathrm{A}]$-connected subgraphs of a vertex-colored digraph in the setting when every vertex is single-colored. 

Similarly to $[\mathrm{A} \ell \, | \, k \mathrm{V}]$-connectivity, let us first study the pairwise intersections of the $[\mathrm{V} \ell \, | \, k \mathrm{A}]$-connected components. Since the proof of the following lemma is based on similar ideas to those of Lemma~\ref{lemma:AlkV_2_intersection}, we defer it to the \hyperref[proof:VlkA_2_intersection]{Appendix}.

\begin{lemma} \label{lemma:VlkA_2_intersection}
 Let $k, \ell \in \mathbb{Z}_+$, let $G$ be a digraph, let $C$ be a finite color set, and let $f \colon V(G) \to C$. Let $V_1, V_2 \subseteq V(G)$ be the vertex sets of two distinct $[\mathrm{V} \ell \, | \, k \mathrm{A}]$-connected components of $G$. Then, the following hold.
 \begin{enumerate}[topsep=3pt, itemsep=-1.5pt, label=(\roman*)]
  \item \label{item:VlkA_2_intersection_i} At most $\ell$ colors appear in $V_1 \cap V_2$.
  \item \label{item:VlkA_2_intersection_ii} There exists at least one color appearing in $V_1 \cup V_2$ but not in $V_1 \cap V_2$.
 \end{enumerate}
\end{lemma}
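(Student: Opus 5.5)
Both parts share one template, the one used for Lemma~\ref{lemma:AlkV_2_intersection}. Suppose the relevant hypothesis fails. We show that any two vertices $u,w\in V_1\cup V_2$ are $[\mathrm{V} \ell \, | \, k \mathrm{A}]$-connected. Then $V_1\cup V_2$ is a clique in $G_{[\mathrm{V} \ell \mid k \mathrm{A}]}$, and maximality of $V_1$ and $V_2$ forces $V_1=V_1\cup V_2=V_2$, contradicting distinctness.

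Pairs lying in the same $V_i$ are fine, so take $u\in V_1\setminus V_2$ and $w\in V_2\setminus V_1$. Suppose they are not connected. By \ref{item:VlkA} there exist $C'\subseteq C\setminus\{f(u),f(w)\}$ with $|C'|\le\ell$ and $A'\subseteq A(G)$ with $|A'|\le k-1$ whose removal leaves no $u$-$w$ dipath or no $w$-$u$ dipath. The goal is to find a vertex $v\in V_1\cap V_2$ with $f(v)\notin C'$. Since $u,v\in V_1$ are $[\mathrm{V} \ell \, | \, k \mathrm{A}]$-connected and neither $u$ nor $v$ is removed, they remain strongly $k$-arc-connected after removing $C'$. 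Hence deleting the $\le k-1$ arcs of $A'$ still leaves a $u$-$v$ and a $v$-$u$ dipath. The same holds for $v$ and $w$ in $V_2$. Concatenating gives $u$-$w$ and $w$-$u$ walks, a contradiction. The remaining issue is the degenerate case $V_1\cap V_2=\emptyset$, handled below.

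For \ref{item:VlkA_2_intersection_i}, suppose at least $\ell+1$ colors appear in $V_1\cap V_2$. Since $|C'|\le\ell$, some vertex $v$ of the intersection has $f(v)\notin C'$, and the template gives the contradiction.

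For \ref{item:VlkA_2_intersection_ii}, suppose every color of $V_1\cup V_2$ appears in $V_1\cap V_2$. Then $V_1\cap V_2\neq\emptyset$: both sets are nonempty, so some color appears in the union and hence in the intersection. In particular $f(u)$ appears at some $v\in V_1\cap V_2$. Since $C'$ avoids $f(u)$, the vertex $v$ survives the removal of $C'$, and the template applies. This single-color trick is where single-coloredness is essential, and it is the only delicate point: we must pick the intersection vertex sharing $u$'s color, since $C'$ is only guaranteed to avoid $f(u)$ and $f(w)$.

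One more check is needed. The template requires $u\neq v$, so that the pair $u,v$ is covered by the component property. This holds because $u\notin V_2$ while $v\in V_2$, and similarly $v\neq w$.
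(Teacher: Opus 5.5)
Your proposal is correct and is essentially the paper's proof. In both, you assume the hypothesis of (i) or (ii) fails and find an intersection vertex $v$ that survives the removal of $C'$: by pigeonhole for (i), and by taking $f(v)=f(u)$ for (ii). You then route $u$--$w$ and $w$--$u$ walks through $v$, so $V_1\cup V_2$ is pairwise connected, contradicting maximality. The only cosmetic difference is that you restrict to $u\in V_1\setminus V_2$, $w\in V_2\setminus V_1$ to guarantee $u\ne v\ne w$, whereas the paper allows $u=v$ and uses length-zero dipaths.
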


Now let us present our algorithm, which considers all sets of at most $\ell + 1$ vertices of pairwise distinct colors as follows. For every set of size exactly $\ell+1$, we take the $[\mathrm{V} \ell \, | \, k \mathrm{A}]$-connected components of $G$. If none of these components contains our set entirely, we proceed to the next set. Otherwise, by Lemma~\ref{lemma:VlkA_2_intersection}, there is exactly one such component; we then take the $[\mathrm{V} \ell \, | \, k \mathrm{A}]$-connected components of this component and continue this procedure until we obtain the maximal $[\mathrm{V} \ell \, | \, k \mathrm{A}]$-connected subgraphs containing at least $\ell + 1$ colors. We then consider all sets of at most $\ell$ vertices of pairwise distinct colors in decreasing order of size. For each such set, if it is not contained in any of the previously determined maximal $[\mathrm{V} \ell \, | \, k \mathrm{A}]$-connected subgraphs, we remove from $G$ all the vertices whose colors are different from those of the chosen vertices, and in the remaining graph, we again iteratively take the $[\mathrm{V} \ell \, | \, k \mathrm{A}]$-connected components of the $[\mathrm{V} \ell \, | \, k \mathrm{A}]$-connected component containing our chosen set as long as there exist such a component.

\bigskip

\begin{algorithm}[H] 
 \caption{Algorithm for finding the maximal $[\mathrm{V} \ell \, | \, k \mathrm{A}]$-connected subgraphs}
 \label{algo:VlkA_components_2}
 \DontPrintSemicolon
 
 \KwIn{a digraph $G$, a finite color set $C$, and a function $f \colon V(G) \to C$.}
 \KwOut{the vertex sets of the maximal $[\mathrm{V} \ell \, | \, k \mathrm{A}]$-connected subgraphs of $G$.}
 
 \smallskip

 $\mathcal{V}^* \gets \emptyset$\;
 \For{$W \subseteq V(G)$ with $|W| = \ell + 1$ and $|f(W)| = \ell + 1$}{
     $\mathcal{V} \gets [\mathrm{V} \ell \, | \, k \mathrm{A}] \textrm{-ConnectedComponents} \big( G, f \big)$\;
     \While{there exists $U \in \mathcal{V}$ for which $W \subseteq U$}{
         \If{$G[U]$ is $[\mathrm{V} \ell \, | \, k \mathrm{A}]$-connected}{
             $\mathcal{V}^* \gets \mathcal{V}^* \cup \{ U \}$\;
             \textbf{break}\;
         }
         \ElseIf{$U = W$}{
             \textbf{break}\;
         }
         \Else{
             $\mathcal{V} \gets [\mathrm{V} \ell \, | \, k \mathrm{A}] \textrm{-ConnectedComponents} \big( G[U], f_U \big)$
         }
     }
 }
 \For{$i = \ell$ \textbf{to} 1}{
     \For{$W \subseteq V(G)$ with $|W| = i$ and $|f(W)| = i$}{
         \If{there does not exist $U \in \mathcal{V}^*$ with $W \subset U$}{
             $W' \gets \big\{ v \in V(G) \bigm| f(v) \in f(W) \big\}$\;
             $\mathcal{V} \gets [\mathrm{V} \ell \, | \, k \mathrm{A}] \textrm{-ConnectedComponents} \big( G[W'], f_{W'} \big)$\;
             \While{there exists $U \in \mathcal{V}$ for which $W \subseteq U$}{
                 \If{$G[U]$ is $[\mathrm{V} \ell \, | \, k \mathrm{A}]$-connected}{
                     $\mathcal{V}^* \gets \mathcal{V}^* \cup \{ U \}$\;
                     \textbf{break}\;
                 }
                 \ElseIf{$U = W$}{
                     \textbf{break}\;
                 }
                 \Else{
                     $\mathcal{V} \gets [\mathrm{V} \ell \, | \, k \mathrm{A}] \textrm{-ConnectedComponents} \big( G[U], f_U \big)$
                 }
            }
         }
     }
 }
 \Return{$\mathcal{V}^*$}\;
\end{algorithm}

\bigskip

We show the correctness of Algorithm~\ref{algo:VlkA_components_2} and analyze its running time in the \hyperref[proof:VlkA_2_algo]{Appendix}.

\begin{theorem} \label{thm:VlkA_2_algo}
 Let $k, \ell \in \mathbb{Z}_+$ be fixed. Given a digraph $G$, a finite color set $C$ and a function $f \colon V(G) \to C$, Algorithm~\ref{algo:VlkA_components_2} outputs the vertex sets of the maximal $[\mathrm{V} \ell \, | \, k \mathrm{A}]$-connected subgraphs of $G$ and runs in polynomial time.
\end{theorem}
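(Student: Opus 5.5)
I will follow the structure of the proof of Theorem~\ref{thm:AlkV_2_algo}, treating running time and correctness separately, with Lemma~\ref{lemma:VlkA_2_intersection} playing the role that Lemma~\ref{lemma:AlkV_2_intersection} played there.

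\emph{Running time.} For fixed $\ell$, the outer loops range over at most $\sum_{i=1}^{\ell+1}\binom{n}{i}=O(n^{\ell+1})$ sets $W$. In each while loop, the current set $U$ is replaced either by the break condition or by a component of $G[U]$ containing $W$. Such a component is either $U$ itself, in which case $G[U]$ is connected or $U=W$ and we break, or a proper subset of $U$. Hence each while loop performs at most $n$ iterations. Each iteration calls the polynomial-time subroutine of Theorem~\ref{thm:complexity_of_VlkA_single} and one polynomial connectivity test, which consists of flow computations over all $C'\subseteq C$ with $|C'|\le \ell$. So the total running time is polynomial.

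\emph{Uniqueness of the containing component.} In the first phase, $W$ consists of $\ell+1$ vertices of distinct colors. By Lemma~\ref{lemma:VlkA_2_intersection}\ref{item:VlkA_2_intersection_i}, two distinct components cannot both contain $W$, so the $U$ in the while loop is unique.

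In the second phase, $W'$ contains only the $i\le\ell$ colors of $W$. I would argue that any two distinct components of $G[W']$ that both contain $W$ would have an intersection carrying all colors of their union. This contradicts Lemma~\ref{lemma:VlkA_2_intersection}\ref{item:VlkA_2_intersection_ii}, so uniqueness holds again.

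\emph{Key invariant.} If $X$ spans a maximal $[\mathrm{V}\ell\,|\,k\mathrm{A}]$-connected subgraph of $G$ with $W\subseteq X$, then $X\subseteq U$ at every stage. The reason is that connectivity of $G[X]$ implies any two vertices of $X$ are connected in every $G[U]\supseteq G[X]$, since connectivity is monotone under adding vertices. So $X$ lies in some component of $G[U]$, which by uniqueness is the one containing $W$. The loop stops only at a $U$ with $G[U]$ connected and $U\supseteq X$, and maximality then forces $U=X$.

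\emph{Completeness.} Given a maximal $X$, I split into two cases.
\begin{itemize}
\item If $X$ has at least $\ell+1$ colors, choose $W\subseteq X$ with one vertex of each of $\ell+1$ distinct colors. The first phase then outputs $X$.
\item If $X$ has $i\le\ell$ colors, take $W$ as one vertex of each color. Then $X\subseteq W'$, and the invariant applies inside $G[W']$. One must also check that $W\not\subset U$ for every earlier $U\in\mathcal V^*$, or else handle the possibility that the set was skipped.
\end{itemize}

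\emph{Soundness.} Every output $U$ is connected. It is maximal because any connected superset $Y\supseteq U$ would, by the invariant, be contained in the final $U$.

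\emph{Main obstacle.} I expect the hardest part to be the second phase. Specifically, I must justify that skipping $W$ when $W\subset U\in\mathcal V^*$ loses no maximal subgraph. This needs an argument that a maximal $X$ with few colors cannot have its color-representative set covered by a larger maximal subgraph while $X$ itself is not covered.

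I would try to show that if $W\subseteq U$ with $U$ connected, then $X\cup U$ is still $[\mathrm{V}\ell\,|\,k\mathrm{A}]$-connected. Removing at most $\ell$ colors either deletes all vertices of $X$ or leaves some vertex of $W$, through which paths can be routed. This transitivity-via-$W$ argument is the crux: with a single color per vertex, any $\ell$ removed colors that avoid the endpoints spare at least one vertex of $W$ whose color is one of the endpoint colors. Maximality of $X$ then forces $X\subseteq U$.
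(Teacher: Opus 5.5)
Your running-time count, the two uniqueness arguments (Lemma~\ref{lemma:VlkA_2_intersection}\ref{item:VlkA_2_intersection_i} when $|f(W)|=\ell+1$, and part~\ref{item:VlkA_2_intersection_ii} inside $G[W']$), the invariant, and your treatment of the first loop are all fine. This is the route the paper takes; its proof only says that correctness follows from Lemma~\ref{lemma:VlkA_2_intersection}.

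\textbf{The gap: soundness of the second loop.} There the iteration starts from $G[W']$. So your invariant only controls connected sets $Y$ with $W\subseteq Y\subseteq W'$, that is, sets using no colors outside $f(W)$. A connected proper superset of the output $U$ that contains a vertex of another color is not seen by the invariant. If your one-line argument were valid, it would prove maximality even with the skip test deleted, and that is false. Take $k=\ell=1$ and the complete digraph on $a,b$ (red) and $c$ (blue). The first loop outputs $\{a,b,c\}$. Without the skip test, $W=\{a\}$ would give $W'=\{a,b\}$ and output $\{a,b\}$, which is $[\mathrm{V}1\,|\,1\mathrm{A}]$-connected but not maximal.

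\textbf{What is needed.} You must use the skip test for soundness, together with the decreasing order of $i$. Suppose $W$ with $|f(W)|=i$ is not skipped, and its output $U$ is properly contained in a maximal $[\mathrm{V}\ell\,|\,k\mathrm{A}]$-connected set $Z$.
\begin{itemize}
\item If $f(Z)=f(W)$, then $Z\subseteq W'$, and the invariant gives $Z\subseteq U$, a contradiction.
\item If $|f(Z)|>i$, then $Z$ was already placed in $\mathcal V^*$ by the first loop or an earlier round. Hence $W\subsetneq Z\in\mathcal V^*$, and $W$ would have been skipped.
\end{itemize}
The second case relies on completeness of the earlier rounds.

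Your completeness argument for skipped sets also has a slip. From $X\cup U$ being connected, maximality of $X$ gives $U\subseteq X$, not $X\subseteq U$. To conclude $X=U\in\mathcal V^*$ you need maximality of $U$, i.e., soundness of an earlier output. So soundness and completeness must be proved together, by induction along the order in which sets enter $\mathcal V^*$: first loop, then $i=\ell,\ell-1,\dots,1$. With that induction in place, your ingredients suffice.
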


By Theorem~\ref{thm:VlkA_2_algo}, we conclude the following.

\begin{corollary} \label{cor:complexity_of_VlkA_single_2}
For any $k, \ell \in \mathbb{Z}_+$, the following problem can be solved in polynomial time.

 \smallskip
 
 \noindent {\bf \scshape \textsc{Maximal-$\boldsymbol{[\mathrm{V} \ell \, | \, k \mathrm{A}]}$-ConnectedSubgraphs-EachVertexSingleColored}} \\
 \textit{Instance:} a digraph $G$, a finite color set $C$, a function $f \colon V(G) \to C$, and $m \in \mathbb{Z}_+$. \\
 \textit{Question:} is it true that $G$ has a maximal $[\mathrm{V} \ell \, | \, k \mathrm{A}]$-connected subgraph on at least $m$ vertices?
 
 \smallskip
 
 \noindent Moreover, in this single-colored case, all the maximal $[\mathrm{V} \ell \, | \, k \mathrm{A}]$-connected subgraphs of $G$ can be found in polynomial time.
\end{corollary}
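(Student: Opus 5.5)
The statement to prove is Corollary~\ref{cor:complexity_of_VlkA_single_2}. It follows directly from Theorem~\ref{thm:VlkA_2_algo}. The plan is to run Algorithm~\ref{algo:VlkA_components_2} on the input $(G, C, f)$ with $f \colon V(G) \to C$. By Theorem~\ref{thm:VlkA_2_algo}, for fixed $k, \ell$ this outputs exactly the family $\mathcal{V}^*$ of vertex sets of all maximal $[\mathrm{V} \ell \, | \, k \mathrm{A}]$-connected subgraphs of $G$, in polynomial time. This already proves the ``moreover'' part.

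For the decision question, I would use the output $\mathcal{V}^*$ and test whether some $U \in \mathcal{V}^*$ satisfies $|U| \ge m$. The family $\mathcal{V}^*$ is produced by a polynomial-time procedure, so it has polynomially many members, each of size at most $n$. Scanning it therefore costs polynomial time. The answer is ``yes'' exactly when such a $U$ exists, because $\mathcal{V}^*$ is precisely the set of maximal $[\mathrm{V} \ell \, | \, k \mathrm{A}]$-connected subgraphs.

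The only point needing care is that the algorithm is polynomial for each fixed pair $(k,\ell)$, with the degree depending on $k$ and $\ell$. Its outer loops range over $O(n^{\ell+1})$ vertex sets and call the subroutine from Theorem~\ref{thm:complexity_of_VlkA_single}, whose polynomial bound comes from Theorem~\ref{thm:pK2_free_listing_max_clique} with $p = \ell+2$. This matches the quantifier ``for any $k, \ell \in \mathbb{Z}_+$'' in the corollary, so there is no real obstacle. The correctness and running-time analysis is entirely contained in Theorem~\ref{thm:VlkA_2_algo}, which I take as given.
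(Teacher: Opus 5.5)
Your proposal is correct and follows the paper, which obtains the corollary directly from Theorem~\ref{thm:VlkA_2_algo}. Your explicit step of scanning the output family $\mathcal{V}^*$ for a member of size at least $m$ simply spells out the decision version, which the paper leaves implicit.
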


\subsubsection[\texorpdfstring{Maximal $[\mathrm{V} \ell \, | \, k \mathrm{V}]$}{}-connected subgraphs when every vertex is single-colored]{\texorpdfstring{Maximal $\boldsymbol{[\mathrm{V} \ell \, | \, k \mathrm{V}]}$}{}-connected subgraphs when every vertex is single-colored} \label{section:VlkV_new_components}

Now we consider the problem of finding the maximal $[\mathrm{V} \ell \, | \, k \mathrm{V}]$-connected subgraphs of a vertex-colored digraph in the setting when every vertex is single-colored. Similarly to $[\mathrm{V} \ell \, | \, k \mathrm{A}]$-connectivity, we first study the pairwise intersections of the $[\mathrm{V} \ell \, | \, k \mathrm{V}]$-connected components. Again, the proof is deferred to the \hyperref[proof:VlkV_2_intersection]{Appendix}.

\begin{lemma} \label{lemma:VlkV_2_intersection}
 Let $k, \ell \in \mathbb{Z}_+$, let $G$ be a digraph, let $C$ be a finite color set, and let $f \colon V(G) \to C$. Let $V_1, V_2 \subseteq V(G)$ be the vertex set of two distinct $[\mathrm{V} \ell \, | \, k \mathrm{V}]$-connected components of $G$. Then, the following hold.
 \begin{enumerate}[topsep=3pt, itemsep=-1.5pt, label=(\roman*)]
  \item \label{item:VlkV_2_intersection_i} At most $k + \ell - 1$ colors appear in $V_1 \cap V_2$.
  \item \label{item:VlkV_2_intersection_ii} There exists $U \subseteq V_1 \cap V_2$ with $|U| \le k-1$ such that at least one color appears in $(V_1 \cup V_2) \setminus U$ but not in $(V_1 \cap V_2) \setminus U$.
 \end{enumerate}
\end{lemma}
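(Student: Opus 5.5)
Both parts will be proved by contradiction with the maximality of $V_1$ and $V_2$, in the style of Lemma~\ref{lemma:AlkV_2_intersection}. Since $V_1 \ne V_2$ are maximal cliques of $G_{[\mathrm{V} \ell \mid k \mathrm{V}]}$, the union $V_1 \cup V_2$ is not a clique. Hence there exist $u \in V_1$ and $w \in V_2$ (necessarily $u \notin V_2$ and $w \notin V_1$) that are not $[\mathrm{V} \ell \, | \, k \mathrm{V}]$-connected. By observation~\ref{item:VlkV}, this gives $C' \subseteq C \setminus \{f(u), f(w)\}$, $A' \subseteq A(G)$ and $V' \subseteq V(G)\setminus\{u,w\}$ with $|C'| \le \ell$ and $|A'|+|V'| \le k-1$, whose removal destroys all $u$-$w$ dipaths or all $w$-$u$ dipaths. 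The basic transfer step is the following. Suppose some $x \in V_1 \cap V_2$ survives, i.e., $f(x) \notin C'$ and $x \notin V'$. Then $x$ is $[\mathrm{V} \ell \, | \, k \mathrm{V}]$-connected to both $u$ and $w$, and all of $u,x,w$ survive $C'$. So after removing $C'$ the pairs $u,x$ and $x,w$ remain strongly $k$-vertex-connected. Removing the further at most $k-1$ arcs and vertices of $A' \cup V'$ (none of them $u,x,w$) leaves $u$-$x$, $x$-$u$, $x$-$w$ and $w$-$x$ dipaths, hence walks between $u$ and $w$ in both directions, a contradiction. Thus every vertex of $V_1\cap V_2$ lies in $V'$ or has a color in $C'$.

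For~\ref{item:VlkV_2_intersection_i}, suppose $V_1 \cap V_2$ contains vertices of at least $k+\ell$ distinct colors, and pick one representative of each such color. At most $|V'| \le k-1$ representatives lie in $V'$. Since $f$ is single-valued, $C'$ kills representatives of at most $\ell$ colors. So at least one representative survives, contradicting the transfer step.

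For~\ref{item:VlkV_2_intersection_ii}, I will take $U \colonequals V' \cap V_1 \cap V_2$, so that $|U| \le k-1$. By the transfer step, every vertex of $(V_1\cap V_2)\setminus U$ has its color in $C'$. On the other hand, $f(u) \notin C'$, and $u \in V_1 \subseteq V_1\cup V_2$ with $u \notin V_2$, so $u \in (V_1 \cup V_2)\setminus U$. Therefore the color $f(u)$ appears in $(V_1\cup V_2)\setminus U$ but not in $(V_1\cap V_2)\setminus U$, which proves~\ref{item:VlkV_2_intersection_ii}.

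The main point needing care is the transfer step: we must make sure that the vertex-color removal does not delete $u$, $x$ or $w$, so that the ``either one of the vertices is removed'' clause in Definition~\ref{def:VlkA_and_VlkV} cannot be invoked. The choice $C' \cap \{f(u),f(w)\} = \emptyset$ guarantees this for $u$ and $w$, and the survival assumption guarantees it for $x$. We must also check that $u, w \notin V'$, which holds by construction, so that Menger's theorem applies with at most $k-1$ deleted arcs and vertices. The remaining steps are counting arguments that use single-coloredness.
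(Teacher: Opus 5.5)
Your proof is correct and follows essentially the same argument as the paper: take a separating triple $(C',A',V')$ for a non-connected pair $u,w$, and observe that any vertex of $V_1\cap V_2$ avoiding both $C'$ and $V'$ yields walks in both directions between $u$ and $w$. Your direct choice $U = V'\cap V_1\cap V_2$ in (ii) is simply the explicit form of the paper's contrapositive, and it is slightly more careful, since the paper plugs in $V'$ itself, which need not lie in $V_1\cap V_2$.
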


To find the maximal $[\mathrm{V} \ell \, | \, k \mathrm{V}]$-connected subgraphs, we propose an algorithm analogous to Algorithm~\ref{algo:VlkA_components_2}.

\bigskip

\begin{algorithm}[H] 
 \caption{Algorithm for finding the maximal $[\mathrm{V} \ell \, | \, k \mathrm{V}]$-connected subgraphs}
 \label{algo:VlkV_components_2}
 \DontPrintSemicolon
 
 \KwIn{a digraph $G$, a finite color set $C$, and a function $f \colon V(G) \to C$.}
 \KwOut{the vertex sets of the maximal $[\mathrm{V} \ell \, | \, k \mathrm{V}]$-connected subgraphs of $G$.}
 
 \smallskip

 $\mathcal{V}^* \gets \emptyset$\;
 \For{$W \subseteq V(G)$ with $|W| = k + \ell$ and $|f(W)| = k + \ell$}{
     $\mathcal{V} \gets [\mathrm{V} \ell \, | \, k \mathrm{V}] \textrm{-ConnectedComponents} \big( G, f \big)$\;
     \While{there exists $U \in \mathcal{V}$ for which $W \subseteq U$}{
         \If{$G[U]$ is $[\mathrm{V} \ell \, | \, k \mathrm{V}]$-connected}{
             $\mathcal{V}^* \gets \mathcal{V}^* \cup \{ U \}$\;
             \textbf{break}\;
         }
         \ElseIf{$U = W$}{
             \textbf{break}\;
         }
         \Else{
             $\mathcal{V} \gets [\mathrm{V} \ell \, | \, k \mathrm{V}] \textrm{-ConnectedComponents} \big( G[U], f_U \big)$
         }
     }
 }
 \For{$i = k + \ell - 1$ \textbf{to} 1}{
     \For{$W \subseteq V(G)$ containing exactly $i$ colors and at most $k$ vertices of each color}{
         $W' \gets W \cup \big\{ v \in V(G) \bigm| \text{$W$ contains exactly $k$ vertices of color $f(v)$} \big\}$\;
         $\mathcal{V} \gets [\mathrm{V} \ell \, | \, k \mathrm{V}] \textrm{-ConnectedComponents} \big( G[W'], f_{W'} \big)$\;
         \While{there exists $U \in \mathcal{V}$ for which $W \subseteq U$}{
             \If{$G[U]$ is $[\mathrm{V} \ell \, | \, k \mathrm{V}]$-connected}{
                 $\mathcal{V}^* \gets \mathcal{V}^* \cup \{ U \}$\;
                 \textbf{break}\;
             }
             \ElseIf{$U = W$}{
                 \textbf{break}\;
             }
             \Else{
                 $\mathcal{V} \gets [\mathrm{V} \ell \, | \, k \mathrm{V}] \textrm{-ConnectedComponents} \big( G[U], f_U \big)$
             }
         }
     }
 }
 $\mathcal{V}^* \gets \{ U \in \mathcal{V}^* \mid \text{$U$ is inclusion-wise maximal in $\mathcal{V}^*$} \}$\;
 \Return{$\mathcal{V}^*$}\;
\end{algorithm}

\bigskip

We show the correctness of Algorithm~\ref{algo:VlkV_components_2} and analyze its running time in the \hyperref[proof:VlkV_2_algo]{Appendix}.

\begin{theorem} \label{thm:VlkV_2_algo}
 Let $k, \ell \in \mathbb{Z}_+$ be fixed. Given a digraph $G$, a finite color set $C$, a function $f \colon V(G) \to C$, Algorithm~\ref{algo:VlkV_components_2} outputs the vertex sets of the maximal $[\mathrm{V} \ell \, | \, k \mathrm{V}]$-connected subgraphs of $G$ and runs in polynomial time.
\end{theorem}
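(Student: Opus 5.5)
We split the argument into running time and correctness, modelled on Theorems~\ref{thm:AlkV_2_algo} and~\ref{thm:VlkA_2_algo}.

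\emph{Running time.} The first for loop ranges over at most $\binom{n}{k+\ell}$ sets. Every iteration of the inner while loop either stops or replaces $U$ by a strictly smaller set that still contains $W$. So the loop runs at most $n$ times, and each pass makes one call to the polynomial subroutine of Theorem~\ref{thm:complexity_of_VlkV_single}. In the second phase, a set $W$ with exactly $i \le k+\ell-1$ colors and at most $k$ vertices of each color has at most $k(k+\ell-1)$ vertices. Hence there are $O(n^{k(k+\ell-1)})$ candidates, which is polynomial for fixed $k,\ell$, and each again needs at most $n$ subroutine calls. Checking whether $G[U]$ is $[\mathrm{V}\ell\,|\,k\mathrm{V}]$-connected is polynomial, since we compute the auxiliary graph and test whether it is complete. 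The final maximality filter is quadratic in $|\mathcal{V}^*|$.

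\emph{Soundness.} Every set added to $\mathcal{V}^*$ spans an $[\mathrm{V}\ell\,|\,k\mathrm{V}]$-connected subgraph. The last line keeps only inclusion-wise maximal members. So it suffices to show that every maximal $[\mathrm{V}\ell\,|\,k\mathrm{V}]$-connected subgraph $G[X]$ is added at some point; then its members are exactly the maximal ones.

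The key invariant is: if $X \subseteq U$ at some stage and $W \subseteq X$ is the seed, then $X$ stays inside the component of the current graph that contains $W$, as long as that component is unique. Indeed, $G[X]$ is connected, so $X$ is a clique in the auxiliary graph of any supergraph containing it, since connectivity in $G[X]$ implies connectivity in larger graphs. Hence $X$ lies in some component of $G[U]$. We must show that this is the same component as the one the algorithm selects, and that the selected component is unique.

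\emph{Case 1: $X$ contains at least $k+\ell$ colors.} Choose $W \subseteq X$ with $k+\ell$ vertices of distinct colors. By Lemma~\ref{lemma:VlkV_2_intersection}\ref{item:VlkV_2_intersection_i}, two distinct components share at most $k+\ell-1$ colors, so exactly one component of each current graph contains $W$, and it must contain $X$. The descent therefore ends at a set $U \supseteq X$ with $G[U]$ connected. By maximality $U = X$, unless the \texttt{U = W} break occurs, which forces $X = W$ as well.

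\emph{Case 2: $X$ has $i \le k+\ell-1$ colors.} Here we pick $W$ with a careful choice. For each color class of $X$, take all of its vertices if it has fewer than $k$, and otherwise any $k$ of them. Then $W' \supseteq X$, because a color with at most $k-1$ vertices in $W$ contributes all of its $X$-vertices, and a color with $k$ vertices in $W$ contributes every vertex of that color in $G$. In $G[W']$ we need uniqueness of the component containing $W$ at each step. Suppose two distinct components $V_1, V_2$ both contain $W$. Apply Lemma~\ref{lemma:VlkV_2_intersection}\ref{item:VlkV_2_intersection_ii}: there is $U' \subseteq V_1 \cap V_2$ with $|U'| \le k-1$ and a color $c$ appearing in $(V_1\cup V_2)\setminus U'$ but not in $(V_1\cap V_2)\setminus U'$. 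All colors of $W'$ are colors of $W$. If $c$ has $k$ vertices in $W$, at least one of them survives outside $U'$, which is a contradiction. If $c$ has fewer than $k$ vertices in $W$, then all $c$-colored vertices of $W'$ are in $W \subseteq V_1 \cap V_2$. So a $c$-vertex of $(V_1\cup V_2)\setminus U'$ lies in $(V_1\cap V_2)\setminus U'$, again a contradiction. Hence the descent is unique, keeps $X$, and ends at $X$ as in Case~1.

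\emph{Main obstacle.} The delicate step is Case~2: making this use of Lemma~\ref{lemma:VlkV_2_intersection}\ref{item:VlkV_2_intersection_ii} rigorous, and checking the \texttt{U = W} break and the one-vertex loop convention. I would first verify that the invariant "$X$ is contained in some component" holds in induced subgraphs $G[U]$ containing $X$.
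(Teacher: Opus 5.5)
Your proposal is correct and follows the paper's route. The paper's proof only says that correctness ``directly follows from Lemma~\ref{lemma:VlkV_2_intersection} and the construction of $W'$'', and it bounds the running time by the same count of $\binom{n}{k+\ell}+\sum_{j=1}^{k(k+\ell-1)}\binom{n}{j}$ seeds with at most $n$ subroutine calls each; your two cases supply the details that sentence leaves implicit (part~(i) for seeds with $k+\ell$ colors, part~(ii) plus the color structure of $W'$ for the rest, then the final maximality filter), though you should state explicitly that the Case~2 uniqueness argument applies verbatim to every $G[U]$ with $W \subseteq U \subseteq W'$ met during the descent, since such $U$ inherits both properties of $W'$ that you use.
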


By Theorem~\ref{thm:VlkV_2_algo}, we conclude the following.

\begin{corollary} \label{cor:complexity_of_VlkV_single_2}
For any $k, \ell \in \mathbb{Z}_+$, the following problem can be solved in polynomial time.

 \smallskip
 
 \noindent {\bf \scshape \textsc{Maximal-$\boldsymbol{[\mathrm{V} \ell \, | \, k \mathrm{V}]}$-ConnectedSubgraphs-EachVertexSingleColored}} \\
 \textit{Instance:} a digraph $G$, a finite color set $C$, a function $f \colon V(G) \to C$, and $m \in \mathbb{Z}_+$. \\
 \textit{Question:} is it true that $G$ has a maximal $[\mathrm{V} \ell \, | \, k \mathrm{V}]$-connected subgraph on at least $m$ vertices?
 
 \smallskip
 
 \noindent Moreover, in this single-colored case, all the maximal $[\mathrm{V} \ell \, | \, k \mathrm{V}]$-connected subgraphs of $G$ can be found in polynomial time.
\end{corollary}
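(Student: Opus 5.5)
We prove Corollary~\ref{cor:complexity_of_VlkV_single_2} by reducing it directly to Theorem~\ref{thm:VlkV_2_algo}, just as Corollaries~\ref{cor:complexity_of_AlkA_2} and~\ref{cor:complexity_of_AlkV_2} follow from their algorithmic theorems.

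Fix $k, \ell \in \mathbb{Z}_+$ and take an instance $(G, C, f, m)$ with $f \colon V(G) \to C$. We run Algorithm~\ref{algo:VlkV_components_2} on $(G, C, f)$. By Theorem~\ref{thm:VlkV_2_algo}, this takes time polynomial in the input size and returns exactly the family $\mathcal{V}^*$ of vertex sets of the maximal $[\mathrm{V} \ell \, | \, k \mathrm{V}]$-connected subgraphs of $G$. This already gives the ``moreover'' part of the statement.

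To decide the question, we observe that $|\mathcal{V}^*|$ is polynomial, since it is produced within polynomial time. We then scan $\mathcal{V}^*$ and answer ``yes'' exactly when some $U \in \mathcal{V}^*$ satisfies $|U| \ge m$. The scan adds at most $O(n \cdot |\mathcal{V}^*|)$ time, so the whole procedure stays polynomial.

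Correctness of the answer is immediate. A maximal $[\mathrm{V} \ell \, | \, k \mathrm{V}]$-connected subgraph on at least $m$ vertices exists if and only if some member of $\mathcal{V}^*$ has size at least $m$. This holds because each such subgraph is the induced subgraph $G[U]$ on its vertex set $U$, so its vertex count equals $|U|$.

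Essentially all the difficulty sits in Theorem~\ref{thm:VlkV_2_algo}, which we assume here. The only point to watch is that $k$ and $\ell$ are treated as constants, because the algorithm's running time depends exponentially on them.
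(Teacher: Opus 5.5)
Your proposal is correct and matches the paper, which obtains this corollary in one line from Theorem~\ref{thm:VlkV_2_algo}. Your additional details (the output family has polynomial size, you scan it for a set of size at least $m$, and $k,\ell$ are fixed) simply make that deduction explicit.
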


\section{Internally vertex-color-avoiding strong connectivity} \label{section:IVlk}

In this section, we focus on internally vertex-color-avoiding strong connectivity. First, we present the formal definitions; then, in Section~\ref{section:IVlk_old_components}, we prove that finding the internally vertex-color-avoiding strongly connected components of a vertex-colored digraph is NP-hard even if every vertex is single-colored with the same color. Finally, in Section~\ref{section:IVlk_new_components}, we prove that finding the maximal internally vertex-color-avoiding strongly connected subgraphs is also NP-hard even if every vertex is single-colored with the same color.

\begin{definition}[${[\mathrm{IV} \ell \, | \, k \mathrm{A}]}$- and ${[\mathrm{IV} \ell \, | \, k \mathrm{V}]}$-connectivity] \label{def:IVlkA_and_IVlkV}
 Let $k, \ell \in \mathbb{Z}_+$, let $G$ be a digraph, let $C$ be a finite color set, and let $f \colon V(G) \to 2^C$ be a function assigning some colors to every vertex. Two distinct vertices $u, v \in V(G)$ are called \emph{internally vertex-$\ell$-color-avoiding strongly $k$-arc-connected}, or for short \emph{$[\mathrm{IV} \ell \, | \, k \mathrm{A}]$-connected} (and respectively, \emph{internally vertex-$\ell$-color-avoiding strongly $k$-vertex-connected}, or for short \emph{$[\mathrm{IV} \ell \, | \, k \mathrm{V}]$-connected}) if, after the removal of any at most $\ell$ colors excluding the vertices $u$ and $v$, they remain strongly $k$-arc-connected (or strongly $k$-vertex-connected).
\end{definition}

Observe that if two vertices are $[\mathrm{IV} \ell \, | \, k \mathrm{A}]$- or $[\mathrm{IV} \ell \, | \, k \mathrm{V}]$-connected, then they are also $[\mathrm{V} \ell \, | \, k \mathrm{A}]$- or $[\mathrm{V} \ell \, | \, k \mathrm{V}]$-connected, respectively, but the converse is not necessarily true. For example, in the vertex-colored digraph $G_1$ shown in Figure~\ref{fig:example_VlkA}, no two vertices are $[\mathrm{IV} 2 \, | \, 1 \mathrm{A}]$-connected.

\subsection{Internally vertex-color-avoiding strongly connected components} \label{section:IVlk_old_components}

In this section, we define internally vertex-color-avoiding strongly connected components and prove that finding the $[\mathrm{IV} \ell \, | \, k \mathrm{A}]$- and the $[\mathrm{IV} \ell \, | \, k \mathrm{V}]$-connected components of a vertex-colored digraph is NP-hard.

It is not difficult to see that the relations of $[\mathrm{IV} \ell \, | \, k \mathrm{A}]$- and $[\mathrm{IV} \ell \, | \, k \mathrm{V}]$-connectivity are not transitive, and thereby they are not equivalence relations for any $k, \ell \in \mathbb{Z}_+$.

For any $k, \ell \in \mathbb{Z}_+$, the \emph{internally vertex-$\ell$-color-avoiding strongly $k$-arc-} and \emph{$k$-vertex-connected components}, or for short the \emph{$[\mathrm{IV} \ell \, | \, k \mathrm{A}]$-} and \emph{$[\mathrm{IV} \ell \, | \, k \mathrm{V}]$-connected components}, of a vertex-colored digraph $G$ are maximal sets of vertices such that any two of these vertices are $[\mathrm{IV} \ell \, | \, k \mathrm{A}]$- or $[\mathrm{IV} \ell \, | \, k \mathrm{V}]$-connected in $G$, respectively. If a vertex $v \in V(G)$ is not $[\mathrm{IV} \ell \, | \, k \mathrm{A}]$- or $[\mathrm{IV} \ell \, | \, k \mathrm{V}]$-connected to any other vertex of the digraph $G$ for some $k, \ell \in \mathbb{Z}_+$, then $v$ is considered to form an $[\mathrm{IV} \ell \, | \, k \mathrm{A}]$- or $[\mathrm{IV} \ell \, | \, k \mathrm{V}]$-connected component of $G$ in itself, respectively.

We say that a vertex-colored digraph on at least two vertices is \emph{internally vertex-$\ell$-color-avoiding strongly $k$-arc-connected}, or for short $[\mathrm{IV} \ell \, | \, k \mathrm{A}]$-connected (and respectively, \emph{internally vertex-$\ell$-color-avoiding strongly $k$-vertex-connected}, or for short $[\mathrm{IV} \ell \, | \, k \mathrm{V}]$-connected) if it has exactly one $[\mathrm{IV} \ell \, | \, k \mathrm{A}]$-connected component (or $[\mathrm{IV} \ell \, | \, k \mathrm{V}]$-connected component). A vertex-colored digraph on exactly one vertex is $[\mathrm{IV} \ell \, | \, k \mathrm{A}]$-connected (and respectively, $[\mathrm{IV} \ell \, | \, k \mathrm{V}]$-connected) if it has at least $k$ loops.

To find the internally vertex-color-avoiding strongly connected components of a vertex-colored digraph, we use auxiliary graphs.

\begin{definition}[Auxiliary graphs $G_{[\mathrm{IV} \ell \mid k \mathrm{A}]}$ and $G_{[\mathrm{IV} \ell \mid k \mathrm{V}]}$]
 Let $k, \ell \in \mathbb{Z}_+$, let $G$ be a digraph, let $C$ be a finite color set, and let $f \colon V(G) \to 2^C$. The \emph{auxiliary graphs} of $G$ are the undirected graphs $G_{[\mathrm{IV} \ell \mid k \mathrm{A}]}$ and $G_{[\mathrm{IV} \ell \mid k \mathrm{V}]}$ on the same vertex set as $G$ and two vertices of theirs are adjacent if and only if the corresponding vertices in $G$ are $[\mathrm{IV} \ell \, | \, k \mathrm{A}]$- or $[\mathrm{IV} \ell \, | \, k \mathrm{V}]$-connected, respectively.
\end{definition}

Similarly as before, we have that $W \subseteq V(G)$ is the vertex set of an $[\mathrm{IV} \ell \, | \, k \mathrm{A}]$- or an $[\mathrm{IV} \ell \, | \, k \mathrm{V}]$-connected component of $G$ if and only if $W$ spans a maximal clique in $G_{[\mathrm{IV} \ell \mid k \mathrm{A}]}$ or $G_{[\mathrm{IV} \ell \mid k \mathrm{V}]}$, respectively.

The proof that the problem of finding internally vertex-color-avoiding strongly connected components of a vertex-colored digraph is NP-hard is similar to that of Theorem~\ref{thm:complexity_of_VlkA_and_VlkV}, and is thereby deferred to the \hyperref[proof:complexity_of_IVlkA_and_IVlkV]{Appendix}.

\begin{theorem} \label{thm:complexity_of_IVlkA_and_IVlkV}
For any $k, \ell \in \mathbb{Z}_+$, the following problems are NP-complete even for undirected graphs in which every vertex is single-colored with the same color.

 \smallskip

 \noindent {\bf \textsc{$\boldsymbol{[\mathrm{IV} \ell \, | \, k \mathrm{A}]}$-ConnectedComponents}}

 \smallskip
 
 \noindent \textit{Instance:} a digraph $G$, a finite color set $C$, a function $f \colon V(G) \to 2^C$, and $m \in \mathbb{Z}_+$.
 
 \noindent textit{Question:} is it true that $G$ has a $[\mathrm{IV} \ell \, | \, k \mathrm{A}]$-connected component of size at least~$m$?

 \medskip
 
 \noindent {\bf \textsc{$\boldsymbol{[\mathrm{IV} \ell \, | \, k \mathrm{V}]}$-ConnectedComponents}}

 \smallskip
 
 \noindent \textit{Instance:} a digraph $G$, a finite color set $C$, a function $f \colon V(G) \to 2^C$, and $m \in \mathbb{Z}_+$.
 
 \noindent \textit{Question:} is it true that $G$ has a $[\mathrm{IV} \ell \, | \, k \mathrm{V}]$-connected component of size at least~$m$?
\end{theorem}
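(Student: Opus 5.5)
The plan is to reduce \textsc{Clique} to both problems, mirroring the proof of Theorem~\ref{thm:complexity_of_VlkA_and_VlkV}, but using only one color. The key observation is that if every vertex carries the same single color $c$, then removing $c$ while excluding $u$ and $v$ deletes every vertex except $u$ and $v$. Only the arcs between $u$ and $v$ then survive.

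\emph{Membership in NP.} Fix $k,\ell$. A certificate is a set $W$ of at least $m$ vertices that are pairwise connected in the relevant sense; by the maximal-clique characterization of components, such a $W$ extends to a component of size at least $m$, and conversely any such component is itself a certificate. To check a pair $u,v \in W$, we enumerate all $C' \subseteq C$ with $|C'| \le \ell$; there are $O(|C|^\ell)$ of them, which is polynomial for fixed $\ell$. For each $C'$, we delete the vertices other than $u,v$ whose color set meets $C'$. We then run a maximum flow computation in both directions, with arc capacities for $k\mathrm{A}$ and vertex-split capacities for $k\mathrm{V}$.

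\emph{Construction.} Given an undirected graph $G$ and $m$, we let $G'$ be obtained from $G$ by replacing every edge with $k$ parallel edges. We set $C=\{c\}$ and $f(v)=\{c\}$ for all $v$. We then claim
\[
G'_{[\mathrm{IV}\ell \mid k\mathrm{A}]} = G'_{[\mathrm{IV}\ell \mid k\mathrm{V}]} = G .
\]
Since $|C|=1$, the only nontrivial removal is $C'=\{c\}$. Removing nothing only leaves more of the graph, so $C'=\{c\}$ is the binding case.

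If $uv \in E(G)$, then after removing $c$ (excluding $u,v$) the $k$ parallel edges between $u$ and $v$ remain. Each direction therefore has $k$ single-arc dipaths, which are pairwise arc-disjoint and trivially internally vertex-disjoint. Hence $u,v$ are both $[\mathrm{IV}\ell\,|\,k\mathrm{A}]$- and $[\mathrm{IV}\ell\,|\,k\mathrm{V}]$-connected.

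If $uv \notin E(G)$, then after removing $c$ the remaining graph consists of $u$ and $v$ with no edge between them. So there is no $u$-$v$ dipath at all, and $u,v$ are not connected in either sense (as $k \ge 1$).

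Consequently the components of $G'$ are exactly the maximal cliques of $G$. Therefore $G$ has a clique of size at least $m$ if and only if $G'$ has an $[\mathrm{IV}\ell\,|\,k\mathrm{A}]$-connected (equivalently, $[\mathrm{IV}\ell\,|\,k\mathrm{V}]$-connected) component of size at least $m$. The construction is clearly polynomial.

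There is no real obstacle; the step that needs care is the case $uv\in E(G)$ for $[\mathrm{IV}\ell\,|\,k\mathrm{V}]$. There one must use the paper's convention that $k$ parallel edges between two vertices count as $k$ internally vertex-disjoint dipaths, as in the footnote to the definition of strong $k$-vertex-connectivity.
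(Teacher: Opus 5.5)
Your proposal is correct and matches the paper's proof: the same reduction from \textsc{Clique}, replacing each edge by $k$ parallel edges and giving every vertex the single color $c$, so that $G'_{[\mathrm{IV}\ell \mid k\mathrm{A}]} = G'_{[\mathrm{IV}\ell \mid k\mathrm{V}]} = G$. You also spell out the NP-membership check and the verification of this identity, which the paper only calls ``not difficult to see.''
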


\subsection{Maximal internally vertex-color-avoiding strongly connected subgraphs}\label{section:IVlk_new_components}

In this section, we define maximal internally vertex-color-avoiding strongly connected subgraphs and prove that finding the maximal $[\mathrm{IV} \ell \, | \, k \mathrm{A}]$- and the maximal $[\mathrm{IV} \ell \, | \, k \mathrm{V}]$-connected subgraphs of a vertex-colored digraph is NP-hard.

Similarly as before, it is not difficult to see that even though some set $W \subseteq V(G)$ of vertices is the vertex set of an $[\mathrm{IV} \ell \, | \, k \mathrm{A}]$- or $[\mathrm{IV} \ell \, | \, k \mathrm{V}]$-connected component of a vertex-colored digraph $G$, the subgraph $G[W]$ is not necessarily $[\mathrm{IV} \ell \, | \, k \mathrm{A}]$- or $[\mathrm{IV} \ell \, | \, k \mathrm{V}]$-connected.

In the following, we study such maximal vertex sets that span an internally vertex-color-avoiding connected subgraph. For a set $W \subseteq V(G)$ of vertices, the subgraph $G[W]$ is called a \emph{maximal internally vertex-$\ell$-color-avoiding strongly $k$-arc-connected subgraph}, or for short a \emph{maximal $[\mathrm{IV} \ell \, | \, k \mathrm{A}]$-connected subgraph} (and \emph{maximal internally vertex-$\ell$-color-avoiding strongly $k$-vertex-connected subgraph}, or for short \emph{maximal $[\mathrm{IV} \ell \, | \, k \mathrm{V}]$-connected subgraph}, respectively), of a vertex-colored digraph $G$, if it is inclusion-wise maximal for the property of being $[\mathrm{IV} \ell \, | \, k \mathrm{A}]$-connected (or $[\mathrm{IV} \ell \, | \, k \mathrm{V}]$-connected).

In the \hyperref[proof:complexity_of_IVlkA_and_IVlkV_2]{Appendix}, we prove that the problem of finding maximal internally vertex-color-avoiding strongly connected subgraphs is NP-hard.

\begin{theorem} \label{thm:complexity_of_IVlkA_and_IVlkV_2}
 The following problems are NP-complete for any $k, \ell \in \mathbb{Z}_+$c, even for undirected graphs in which every vertex has the same color.

 \smallskip
 
 \noindent {\bf \scshape \textsc{Maximal-$\boldsymbol{[\mathrm{IV} \ell \, | \, k \mathrm{A}]}$-ConnectedSubgraphs}} \\
 \textit{Instance:} a digraph $G$, a finite color set $C$, a function $f \colon V(G) \to 2^C$, and $m \in \mathbb{Z}_+$. \\
 \textit{Question:} is it true that $G$ has a maximal $[\mathrm{IV} \ell \, | \, k \mathrm{A}]$-connected subgraph on at least $m$ vertices?

 \medskip
 
 \noindent {\bf \scshape \textsc{Maximal-$\boldsymbol{[\mathrm{IV} \ell \, | \, k \mathrm{V}]}$-ConnectedSubgraphs}} \\
 \textit{Instance:} a digraph $G$, a finite color set $C$, a function $f \colon V(G) \to 2^C$, and $m \in \mathbb{Z}_+$. \\
 \textit{Question:} is it true that $G$ has a maximal $[\mathrm{IV} \ell \, | \, k \mathrm{V}]$-connected subgraph on at least $m$ vertices?
\end{theorem}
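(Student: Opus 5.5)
The plan is a reduction from \textsc{Clique}, modelled on the proof of Theorem~\ref{thm:complexity_of_VlkA_and_VlkV}. It relies on one observation: if every vertex carries the same single color $c$, the internal version of color-avoidance collapses to a purely local condition.

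\emph{Membership in NP.} First I would note that $G$ has a maximal $[\mathrm{IV} \ell \, | \, k \mathrm{A}]$-connected (respectively $[\mathrm{IV} \ell \, | \, k \mathrm{V}]$-connected) subgraph on at least $m$ vertices if and only if it has \emph{some} such connected induced subgraph on at least $m$ vertices. This holds because every connected induced subgraph is contained in an inclusion-wise maximal one. So the certificate is just a set $W$ with $|W| \ge m$, and there is no need to certify maximality, which would be awkward since the property is not hereditary. For fixed $k$ and $\ell$, we check that $G[W]$ is connected in the required sense as follows. For every pair $u, v \in W$ and every $C' \subseteq C$ with $|C'| \le \ell$, delete the vertices of $W \setminus \{u,v\}$ that carry a color of $C'$, then run a max-flow computation in both directions. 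There are at most $|C|^{\ell}+1$ choices of $C'$, which is polynomial. The one-vertex case is just a loop count.

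\emph{Hardness.} Given an undirected graph $G$ and $m$, I may assume $m \ge 2$, since $m = 1$ is trivial. Build $G'$ by replacing every edge of $G$ with $k$ parallel edges, each viewed as a pair of opposite arcs, and color every vertex with the same single color $c$. The key claim is that for $|W| \ge 2$, the subgraph $G'[W]$ is $[\mathrm{IV} \ell \, | \, k \mathrm{A}]$-connected if and only if it is $[\mathrm{IV} \ell \, | \, k \mathrm{V}]$-connected, if and only if $W$ is a clique in $G$.
\begin{itemize}
\item Suppose $u, v \in W$ are non-adjacent in $G$. Removing the color $c$ while excluding $u$ and $v$ deletes every other vertex of $W$. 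This leaves $u$ and $v$ with no arc between them, so $G'[W]$ is not connected in either sense.
\item Suppose $W$ is a clique. The only removable color sets are $\emptyset$ and $\{c\}$. In both cases $u$ and $v$ keep their $k$ parallel arcs in each direction. These give $k$ arc-disjoint and internally vertex-disjoint dipaths, so $u$ and $v$ are strongly $k$-arc- and $k$-vertex-connected. The paper's convention that two vertices joined by $k$ parallel edges count as $k$-vertex-connected is what makes the vertex version work here.
\end{itemize}
One-vertex subgraphs have no loops, so they are never connected in these senses. Consequently, the maximal connected subgraphs of $G'$ on at least two vertices are exactly the maximal cliques of $G$ of size at least $2$. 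Hence $G$ has a clique of size at least $m$ if and only if $G'$ has a maximal $[\mathrm{IV} \ell \, | \, k \mathrm{A}]$-connected (equivalently, $[\mathrm{IV} \ell \, | \, k \mathrm{V}]$-connected) subgraph on at least $m$ vertices. The construction takes polynomial time, so both problems are NP-hard.

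\emph{Main obstacle.} The expected difficulty is the NP-membership step, specifically not having to certify maximality; the reformulation in the first step handles it. The remaining care point is the degenerate cases: single vertices, whether $\ell$ exceeds the number of colors, which is harmless since removing at most $\ell$ colors still only allows $\emptyset$ or $\{c\}$, and the vertex-connectivity convention for two-vertex graphs. All of these are covered by the definitions stated earlier in the paper.
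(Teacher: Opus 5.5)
Your proposal is correct and follows the paper's proof. It uses the same reduction from \textsc{Clique}: replace each edge by $k$ parallel edges and give every vertex the single color $c$. It also relies on the same key equivalence, namely that for $|W| \ge 2$, $G'[W]$ is $[\mathrm{IV} \ell \, | \, k \mathrm{A}]$- or $[\mathrm{IV} \ell \, | \, k \mathrm{V}]$-connected exactly when $W$ is a clique of $G$.

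You also add two points the paper leaves implicit. First, you certify NP membership with an arbitrary, not necessarily maximal, connected $W$ on at least $m$ vertices. Second, you set aside $m = 1$, where an edgeless $G$ would otherwise break the equivalence, because loopless one-vertex subgraphs are never connected.
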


\section{Conclusion}

In this paper, we showed that arc-$\ell$-color-avoiding strongly $k$-arc- and $k$-vertex-connected components, as well as maximal arc-$\ell$-color-avoiding strongly $k$-arc- and $k$-vertex-connected subgraphs, can be found in polynomial time. In contrast, for vertex- and internally vertex-$\ell$-color-avoiding $k$-arc- and $k$-vertex-connectivity, these problems become NP-hard. However, when each vertex is assigned a single color, the problem of finding vertex-$\ell$-color-avoiding strongly $k$-arc- and $k$-vertex-connected components, as well as maximal vertex-$\ell$-color-avoiding strongly $k$-arc- and $k$-vertex-connected subgraphs can be solved in polynomial time.

Determining the complexity of finding vertex-$\ell$-color-avoiding strongly $k$-arc- and $k$-vertex-connected components and maximal such subgraphs when each vertex has a constant-sized color set remains open.

We also note that throughout this paper we always worked with fixed values of $k$ and $\ell$. However, determining the computational complexity of the problems shown to be polynomial-time solvable here when $k$ and $\ell$ are part of the input remains also open.

\paragraph{Acknowledgement.} The authors are grateful to Krist\'{o}f B\'{e}rczi for his comments regarding the manuscript. The authors would also like to express their gratitude to M\'{a}rk Penc for useful conversations. The research presented in this paper was supported by the Ministry of Culture and Innovation and the National Research, Development and Innovation Office within the framework of the Artificial Intelligence National Laboratory Programme. The research of Kitti Varga was supported by the Ministry of Innovation and Technology of Hungary from the National Research, Development and Innovation Fund -- grant STARTING 153093.

\appendix

\section*{Appendix} \label{appendix}

Here we present the deferred proofs.

\medskip

First, we prove that the auxiliary graph $G_{[\mathrm{V} \ell \mid k \mathrm{A}]}$ is $\overline{(\ell + 2) K_2}$-free.

\begin{proof}[Proof of Lemma~\ref{lemma:property_of_VlkA_single}] \label{proof:property_of_VlkA_single}
 Suppose to the contrary that $G_{[\mathrm{V} \ell \mid k \mathrm{A}]}$ contains an induced $\overline{(\ell + 2) K_2}$ subgraph. Let $v_1, \ldots, v_{2 \ell + 4} \allowbreak \in V \big( G_{[\mathrm{V} \ell \mid k \mathrm{A}]} \big)$ denote the vertices of this induced $\overline{(\ell + 2) K_2}$ such that $v_{2i-1} v_{2i} \notin E \big( \, \overline{(\ell + 2) K_2} \, \big)$ for any $i \in [\ell + 2]$.

 By a previous observation regarding Definition~\ref{def:VlkA_and_VlkV}, see~\ref{item:VlkA}, since $v_{2 \ell + 3} v_{2 \ell + 4} \notin E \big( G_{[\mathrm{V} \ell \mid k \mathrm{A}]} \big)$, there exist $C' \subseteq C \setminus \big\{ f(v_{2 \ell + 3}), f(v_{2 \ell + 4}) \big\}$ and $A' \subseteq A(G)$ with $|C'| \le \ell$ and $|A'| \le k-1$ such that after the removal of $C'$ and $A'$ from $G$ there does not remain any $v_{2 \ell + 3}$-$v_{2 \ell + 4}$ or any $v_{2 \ell + 4}$-$v_{2 \ell + 3}$ dipath.
 
 Now we show that $v_1, \ldots, v_{2 \ell + 2}$ are all of some colors from $C'$. Otherwise, without loss of generality, we could assume that $v_1$ is not of a color from $C'$. Then after the removal of $C'$ and $A'$ from $G$, there would remain a $v_{2 \ell + 3}$-$v_1$, a $v_1$-$v_{2 \ell + 3}$, a $v_1$-$v_{2 \ell + 4}$, and a $v_{2 \ell + 4}$-$v_1$ dipath. Hence there would also remain a directed walk between $v_{2 \ell + 3}$ and $v_{2 \ell + 4}$ in both directions via $v_1$, contradicting $v_{2 \ell + 3} v_{2 \ell + 4} \notin E \big( G_{[\mathrm{V} \ell \mid k \mathrm{A}]} \big)$.

 Then the pigeonhole principle and $|C'| \le \ell$ imply that at least three of the vertices $v_1, \ldots, v_{2 \ell + 2}$ are of the same color. Without loss of generality, we can assume that $v_1$ and $v_3$ are of the same color. By the previous observation regarding Definition~\ref{def:VlkA_and_VlkV}, see~\ref{item:VlkA}, since $v_1 v_2 \notin E \big( G_{[\mathrm{V} \ell \mid k \mathrm{A}]} \big)$, there exist $C'' \subseteq C \setminus \big\{ f(v_1), f(v_2) \big\}$ and $A'' \subseteq A(G)$ with $|C''| \le \ell$ and $|A''| \le k-1$ such that after the removal of $C''$ and $A''$ from $G$ there does not remain any $v_1$-$v_2$ or any $v_2$-$v_1$ dipath. However, since $v_1 v_3, v_3 v_2 \in E \big( G_{[\mathrm{V} \ell \mid k \mathrm{A}]} \big)$ and $f(v_3) = f(v_1) \notin C''$, after the removal $C''$ and $A''$ from $G$ there remains a $v_1$-$v_3$, a $v_3$-$v_1$, a $v_3$-$v_2$, and a $v_2$-$v_3$ dipath. Hence there also remains a directed walk between $v_1$ and $v_2$ in both directions via $v_3$, which is a contradiction.
\end{proof}

Now we prove that the auxiliary graph $G_{[\mathrm{V} \ell \mid k \mathrm{V}]}$ is $\overline{p K_2}$-free with $p = \big\lceil (k \ell + k + \ell + 2)/2 \big\rceil$.

\begin{proof}[Proof of Lemma~\ref{lemma:property_of_VlkV_single}] \label{proof:property_of_VlkV_single}
 Suppose to the contrary that $G_{[\mathrm{V} \ell \mid k \mathrm{V}]}$ contains an induced $\overline{p K_2}$ subgraph. Let $v_1, \ldots, \allowbreak v_{2p} \in V \big( G_{[\mathrm{V} \ell \mid k \mathrm{V}]} \big)$ denote the vertices of this induced $\overline{p K_2}$ such that $v_{2i-1} v_{2i} \notin E \big( \, \overline{p K_2} \, \big)$ for any $i \in [p]$.

By a previous observation regarding Definition~\ref{def:VlkA_and_VlkV}, see~\ref{item:VlkV}, since $v_{2p-1} v_{2p} \notin E \big( G_{[\mathrm{V} \ell \mid k \mathrm{V}]} \big)$, there exist $C' \subseteq C \setminus \big\{ f(v_{2p-1}), f(v_{2p}) \big\}$ and $A' \subseteq A(G)$ and $V' \subseteq V(G) \setminus \{ v_{2p-1}, v_{2p} \}$ with $|C'| \le \ell$ and $|A'| + |V'| \le k-1$ such that after the removal of $C'$, $A'$, and $V'$ from $G$ there does not remain any $v_{2p-1}$-$v_{2p}$ or any $v_{2p}$-$v_{2p-1}$ dipath. 
 
 Since $2p - 2 \ge k \ell + k + \ell$ and $|V'| \le k-1$, we have $\big| \{ v_1, \ldots, v_{2 p-2} \} \setminus V' \big| \geq k \ell + \ell + 1$.
 
 First, we show that the vertices of $\{ v_1, \ldots, v_{2p-2} \} \setminus V'$ are all of some colors from $C'$. Otherwise, without loss of generality, we could assume that $v_1$ is not of a color from $C'$. Then after the removal of $C'$, $A'$, and $V'$ from $G$, there would remain a $v_{2p-1}$-$v_1$, a $v_1$-$v_{2p-1}$, a $v_1$-$v_{2p}$, and a $v_{2p}$-$v_1$ dipath. Hence there would also remain a directed walk between $v_{2p-1}$ and $v_{2p}$ in both directions via $v_1$, contradicting $v_{2p-1} v_{2p} \notin E \big( G_{[\mathrm{V} \ell \mid k \mathrm{V}]} \big)$.

 Then the pigeonhole principle and $|C'| \le \ell$ imply that there exists $W \subseteq \{ v_1, \ldots, v_{2p-2} \} \setminus V'$ such that $|W| \ge k+2$ and all vertices of $W$ are of the same color. Without loss of generality, we can assume that $v_1 \in W$. 
 
By the previous observation regarding Definition~\ref{def:VlkA_and_VlkV}, see~\ref{item:VlkV}, since $v_1 v_2 \notin E \big( G_{[\mathrm{V} \ell \mid k \mathrm{V}]} \big)$, there exist $C'' \subseteq C \setminus \big\{ f(v_1), f(v_2) \big\}$, and $A'' \subseteq A(G)$, and $V'' \subseteq V \setminus \{ v_1, v_2 \}$ with $|C''| \le \ell$ and $|A''| + |V''| \le k-1$ such that after the removal of $C''$, $A''$, and $V''$ from $G$ there does not remain any $v_1$-$v_2$ or any $v_2$-$v_1$ dipath. 

 Note that $W \setminus \big( V'' \cup \{ v_1, v_2 \} \big) \ne \emptyset$; without loss of generality, we can assume that $v_3 \in W \setminus \big( V'' \cup \{ v_1, v_2 \} \big)$.
 However, since $v_1 v_3, v_3 v_2 \in E \big( G_{[\mathrm{V} \ell \mid k \mathrm{V}]} \big)$ and $f(v_3) = f(v_1) \notin C''$ and $v_3 \notin V''$, after the removal $C''$, $A''$, and $V''$ from $G$ there remains a $v_1$-$v_3$, a $v_3$-$v_1$, a $v_3$-$v_2$, and a $v_2$-$v_3$ dipath. Hence there also remains a directed walk between $v_1$ and $v_2$ in both directions via $v_3$, which is a contradiction.
\end{proof}

Now we prove that given a vertex-colored digraph and a positive integer $m$, the problems of deciding if the digraph has a maximal $[\mathrm{V} \ell \, | \, k \mathrm{A}]$- or $[\mathrm{V} \ell \, | \, k \mathrm{V}]$-connected subgraph of size at least $m$ are both NP-complete.

\begin{proof}[Proof of Theorem~\ref{thm:complexity_of_VlkA_and_VlkV_2}] \label{proof:complexity_of_VlkA_and_VlkV_2}
 Let $k, \ell \in \mathbb{Z}_+$ be fixed. Obviously, these problems are in NP.
 
 Now we show that these problems are NP-hard by reducing the NP-complete problem \textsc{Clique}~\cite{karp1972reducibility} to them. Let $G$ be an arbitrary undirected graph. Let $G'$ be the undirected graph constructed in the proof of Theorem~\ref{thm:complexity_of_VlkA_and_VlkV}, i.e., $G'$ is obtained from $G$ by replacing each of its edges with $k$ parallel edges. Assign a color to any two vertices of $G'$, and add this color to the set of colors of every other vertex; so altogether we use $\binom{n}{2}$ colors and every vertex has $\binom{n}{2} - (n-1)$ colors in its set of colors.
 
 It is not difficult to see that the following are equivalent for any $W \subseteq V(G) = V(G')$ with $|W| \ge 2$:
 \begin{itemize}[topsep=3pt, itemsep=-1.5pt]
  \item[--] $G[W]$ is a clique,
  \item[--] the subgraph $G'[W]$ is $[\mathrm{V} \ell \, | \, k \mathrm{A}]$-connected,
  \item[--] the subgraph $G'[W]$ is $[\mathrm{V} \ell \, | \, k \mathrm{V}]$-connected.
 \end{itemize}
  
 Therefore, the problems \textsc{Maximal-$[\mathrm{V} \ell \, | \, k \mathrm{A}]$-} and \textsc{Maximal-$[\mathrm{V} \ell \, | \, k \mathrm{V}]$-ConnectedSubgraphs} are indeed NP-complete. 
\end{proof}

Now we show that for any two distinct $[\mathrm{V} \ell \, | \, k \mathrm{A}]$-connected components, (i) at most $\ell$ colors appear in their intersection, and (ii) there exists at least one color appearing in their union but not in their intersection.

\begin{proof}[Proof of Lemma~\ref{lemma:VlkA_2_intersection}] \label{proof:VlkA_2_intersection}
 Let $V_1$ and $V_2$ be the vertex sets of two $[\mathrm{V} \ell \, | \, k \mathrm{A}]$-connected components.

 \smallskip

 Let us start with proving \ref{item:VlkA_2_intersection_i}. We show that if at least $\ell+1$ colors appear in $V_1 \cap V_2$, then any two vertices of $V_1 \cup V_2$ are $[\mathrm{V} \ell \, | \, k \mathrm{A}]$-connected. Then, by the inclusion-wise maximality of $V_1$ and $V_2$, we have $V_1 = V_2$. In order to see this, assume that there exist $\ell + 1$ distinct vertices $v_1, \ldots, v_{\ell + 1} \in V_1 \cap V_2$ with all distinct colors, and suppose to the contrary that there exist two vertices $u, w \in V_1 \cup V_2$, and subsets $C' \subseteq C \setminus \{ f(u), f(w) \big\}$ and $A' \subseteq A(G)$ with $|C'| \le \ell$ and $|A'| \le k-1$ such that after the removal of $C'$ and $A'$ from $G$ there does not remain any $u$-$w$ or any $w$-$u$ dipath. Without loss of generality, we can assume that $u \in V_1$. By the pigeonhole principle, there exists $i \in [\ell + 1]$ such that $f(v_i) \notin C'$. Note that it might happen that $u = v_i$. Since $V_1$ is the vertex set of a $[\mathrm{V} \ell \, | \, k \mathrm{A}]$-connected component, after the removal of $C'$ and $A'$ from $G$, there remains a $u$-$v_i$ and a $v_i$-$u$ dipath. (If $u = v_i$, then these dipaths have length 0.) Analogously, the same holds for $v_i$ and $w$. Therefore, after the removal of $C'$ and $A'$ from $G$, there remains a directed $u$-$w$ and a directed $w$-$u$ walk via $v_i$, which is a contradiction.

 \smallskip
 
 Now we prove \ref{item:VlkA_2_intersection_ii}. We show that if all the colors appearing in $V_1 \cup V_2$ also appear in $V_1 \cap V_2$, then any two vertices of $V_1 \cup V_2$ are $[\mathrm{V} \ell \, | \, k \mathrm{A}]$-connected. Then, by the inclusion-wise maximality of $V_1$ and $V_2$, we have $V_1 = V_2$. Suppose to the contrary that there exist two vertices $u, w \in V_1 \cup V_2$, and subsets $C' \subseteq C \setminus \big\{ f(u), f(w) \big\}$ and $A' \subseteq A(G)$ with $|C'| \le \ell$ and $|A'| \le k-1$ such that after the removal of $C'$ and $A'$ from $G$ there does not remain any $u$-$w$ or any $w$-$u$ dipath. Without loss of generality, we can assume that $u \in V_1$. Since all the colors appearing in $V_1 \cup V_2$ also appear in $V_1 \cap V_2$, there exists $v \in V_1 \cap V_2$ such that $f(v) = f(u) \notin C'$. Again, it might happen that $u = v$. Since $V_1$ is the vertex set of a $[\mathrm{V} \ell \, | \, k \mathrm{A}]$-connected component, after the removal of $C'$ and $A'$ from $G$, there remains a $u$-$v$ and a $v$-$u$ dipath. Analogously, after the removal of $C'$ and $A'$ from $G$, there remain a $v$-$w$ and a $w$-$v$ dipath. Therefore, after the removal of $C'$ and $A'$ from $G$, there remains a directed $u$-$w$ and a directed $w$-$u$ walk via $v$, which is a contradiction.
\end{proof}

Here, we discuss the correctness of Algorithm~\ref{algo:VlkA_components_2} and analyze its running time.

\begin{proof}[Proof of Theorem~\ref{thm:VlkA_2_algo}] \label{proof:VlkA_2_algo}
 We discuss the correctness and the time complexity of the algorithm separately.

 \smallskip
 
 \noindent \textit{Correctness.} The correctness of the algorithm directly follows from Lemma~\ref{lemma:VlkA_2_intersection}.

 \smallskip
 
 \noindent \textit{Time complexity.} It is not difficult to see that the for loops altogether take $\sum_{i=1}^{\ell+1} \binom{n}{i}$ iterations, and each iteration invokes the subroutine $[\mathrm{V} \ell \, | \, k \mathrm{A}]$-Connected\-Components on some subgraph of $G$ at most $n$ times.
 
 Therefore, the subroutine $[\mathrm{V} \ell \, | \, k \mathrm{A}]$-Connected\-Components on some subgraph of $G$ is invoked $O(n^{\ell+2})$ times, and by Theorem~\ref{thm:complexity_of_VlkA_single}, this subroutine has polynomial running time.
 
 Hence, Algorithm~\ref{algo:VlkA_components_2} runs in polynomial time.
\end{proof}

Now we show that for any two distinct $[\mathrm{V} \ell \, | \, k \mathrm{V}]$-connected components, (i) at most $k + \ell - 1$ colors appear in their intersection, and (ii) there exist at most $k-1$ vertices in their intersection such that at least one color appears among the remaining vertices in the union of the two components but not among the remaining vertices in the intersection.

\begin{proof}[Proof of Lemma~\ref{lemma:VlkV_2_intersection}] \label{proof:VlkV_2_intersection}
 Let $V_1$ and $V_2$ be the vertex sets of two $[\mathrm{V} \ell \, | \, k \mathrm{V}]$-connected components. 
 
 Let us start with proving \ref{item:VlkV_2_intersection_i}. We show that if at least $k + \ell$ colors appear in $V_1 \cap V_2$, then any two vertices of $V_1 \cup V_2$ are $[\mathrm{V} \ell \, | \, k \mathrm{V}]$-connected. Then, by the inclusion-wise maximality of $V_1$ and $V_2$, we have $V_1 = V_2$. In order to see this, assume that there exist $k + \ell$ distinct vertices $v_1, \ldots, v_{k + \ell} \in V_1 \cap V_2$ with all distinct colors, and suppose to the contrary that there exist two vertices $u, w \in V_1 \cup V_2$, and subsets $C' \subseteq C \setminus \{ f(u), f(w) \big\}$ and $A' \subseteq A(G)$ and $V' \subseteq V(G) \setminus \{ u, w \}$ with $|C'| \le \ell$ and $|A'| + |V'| \le k-1$ such that after the removal of $C'$, $A'$, and $V'$ from $G$ there does not remain any $u$-$w$ or any $w$-$u$ dipath. Without loss of generality, we can assume that $u \in V_1$. By the pigeonhole principle, there exists $i \in [k + \ell]$ such that $f(v_i) \notin C'$ and $v_i \notin V'$. Note that it might happen that $u = v_i$. Since $V_1$ is the vertex set of a $[\mathrm{V} \ell \, | \, k \mathrm{V}]$-connected component, after the removal of $C'$, $A'$, and $V'$ from $G$, there remains a $u$-$v_i$ and a $v_i$-$u$ dipath. (If $u = v_i$, then these dipaths have length~0.) Analogously, the same holds for $v_i$ and $w$. Therefore, after the removal of $C'$, $A'$, and $V'$ from $G$, there remains a directed $u$-$w$ and a directed $w$-$u$ walk via $v_i$, which is a contradiction.

 \smallskip
 
 Now we prove \ref{item:VlkV_2_intersection_ii}. We show that if for any $U \subseteq V_1 \cap V_2$ with $|U| \le k-1$ all the colors appearing in $(V_1 \cup V_2) \setminus U$ also appear in $(V_1 \cap V_2) \setminus U$, then any two vertices of $V_1 \cup V_2$ are $[\mathrm{V} \ell \, | \, k \mathrm{V}]$-connected. Then, by the inclusion-wise maximality of $V_1$ and $V_2$, we have $V_1 = V_2$.

 Suppose to the contrary that there exist two vertices $u, w \in V_1 \cup V_2$, and subsets $C' \subseteq C \setminus \{ f(u), f(w) \big\}$ and $A' \subseteq A(G)$ and $V' \subseteq V(G) \setminus \{ u, w \}$ with $|C'| \le \ell$ and $|A'| + |V'| \le k-1$ such that after the removal of $C'$, $A'$, and $V'$ from $G$ there does not remain any $u$-$w$ or any $w$-$u$ dipath. Without loss of generality, we can assume that $u \in V_1$. Since all the colors appearing in $(V_1 \cup V_2) \setminus V'$ also appear in $(V_1 \cap V_2) \setminus V'$, there exists $v \in (V_1 \cap V_2) \setminus V'$ such that $f(v) = f(u) \notin C'$. Again, it might happen that $u = v$. Since $V_1$ is the vertex set of a $[\mathrm{V} \ell \, | \, k \mathrm{V}]$-connected component, after the removal of $C'$, $A'$ and $V'$ from $G$, there remains a $u$-$v$ and a $v$-$u$ dipath. Analogously, after the removal of $C'$, $A'$, and $V'$ from $G$, there remains a $v$-$w$ and a $w$-$v$ dipath. Therefore, after the removal of $C'$, $A'$, and $V'$ from $G$, there remains a directed $u$-$w$ and a directed $w$-$u$ walk via $v$, which is a contradiction.
\end{proof}

Let us continue with the correctness and the running time of Algorithm~\ref{algo:VlkV_components_2}.

\begin{proof}[Proof of Theorem~\ref{thm:VlkV_2_algo}] \label{proof:VlkV_2_algo}
 We discuss the correctness and the time complexity of the algorithm separately.

 \smallskip
 
 \noindent \textit{Correctness.} The correctness of the algorithm directly follows from Lemma~\ref{lemma:VlkV_2_intersection} and the construction of $W'$.

 \smallskip
 
 \noindent \textit{Time complexity.} It is not difficult to see that the for loops altogether take at most $\binom{n}{k + \ell} + \sum_{j=1}^{k (k + \ell - 1)} \binom{n}{j}$ iterations, and each iteration invokes the subroutine $[\mathrm{V} \ell \, | \, k \mathrm{V}]$-Connected\-Components on some subgraph of $G$ at most $n$ times.
 
 Therefore, the subroutine $[\mathrm{V} \ell \, | \, k \mathrm{V}]$-Connected\-Components on some subgraph of $G$ is invoked

 \vspace{-7pt}
 
 \[ O(n^{\max \{ k + \ell, k (k + \ell - 1) \} + 1}) \]

 \vspace{-2pt}
 
 \noindent times, and by Theorem~\ref{thm:complexity_of_VlkV_single}, this subroutine has polynomial running time.

 In the final check of inclusion-wise maximality, at most $\binom{n}{k + \ell} + \sum_{j=1}^{k (k + \ell - 1)} \binom{n}{j}$ sets are considered. So this check also takes polynomial time.
 
 Hence, Algorithm~\ref{algo:VlkV_components_2} runs in polynomial time.
\end{proof}

Now we prove that given a vertex-colored digraph and a positive integer $m$, the problems of deciding if the digraph has an $[\mathrm{IV} \ell \, | \, k \mathrm{A}]$- or $[\mathrm{IV} \ell \, | \, k \mathrm{V}]$-connected component of size at least $m$ are both NP-complete  -- even for undirected graphs in which every vertex has the same color.

\begin{proof}[Proof of Theorem~\ref{thm:complexity_of_IVlkA_and_IVlkV}] \label{proof:complexity_of_IVlkA_and_IVlkV}
 Let $k, \ell \in \mathbb{Z}_+$ be fixed. Obviously, these problems are in NP.
 
 Now we show that these problems are NP-hard by reducing the NP-complete problem \textsc{Clique}~\cite{karp1972reducibility} to them. Let $G$ be an arbitrary undirected graph. Let $G'$ be the undirected graph that can be obtained from $G$ by replacing each of its edges with $k$ parallel edges. Let $C = \{ c \}$ and let $f$ be the function that assigns the color $c$ to each vertex of $G'$. For an example, see Figure~\ref{fig:example_IVlkA_and_IVlkV_complexity_construction}.
 
 \begin{figure}[H]
 \centering
 \begin{tikzpicture}[scale=0.8]
  \tikzstyle{vertex}=[draw,circle,fill,minimum size=6,inner sep=0]
 
  \tikzstyle{vertex_red}=[draw, thick, red!50!black, shape=rectangle, fill=red!50, minimum size=8pt, inner sep=0]
 
  \node at (-1.75,1.5) {$G$};
  
  \node[vertex] (v1) at (45:1) [label=above right:$v_1$] {};
  \node[vertex] (v2) at (135:1) [label=above left:$v_2$] {};
  \node[vertex] (v3) at (225:1) [label=below left:$v_3$] {};
  \node[vertex] (v4) at (315:1) [label=below right:$v_4$] {};
  
  \draw[very thick] (v1) -- (v2) -- (v3) -- (v4) -- (v1);
  
  \begin{scope}[shift={(5,0)}]
  \node at (-1.75,1.5) {$G'$};
 
  \node[vertex_red] (w1) at (45:1) [label=above right:$v_1$] {};
  \node[vertex_red] (w2) at (135:1) [label=above left:$v_2$] {};
  \node[vertex_red] (w3) at (225:1) [label=below left:$v_3$] {};
  \node[vertex_red] (w4) at (315:1) [label=below right:$v_4$] {};
  
  \draw[very thick] (w1) to [bend right=20] (w2);
  \draw[very thick] (w2) to [bend right=20] (w1);

  \draw[very thick] (w2) to [bend right=20] (w3);
  \draw[very thick] (w3) to [bend right=20] (w2);

  \draw[very thick] (w3) to [bend right=20] (w4);
  \draw[very thick] (w4) to [bend right=20] (w3);

  \draw[very thick] (w4) to [bend right=20] (w1);
  \draw[very thick] (w1) to [bend right=20] (w4);
  \end{scope}
 \end{tikzpicture}
 \caption{The vertex-colored undirected graph $G'$ constructed in the proof of Theorem~\ref{thm:complexity_of_IVlkA_and_IVlkV} to $G$ in the case $k = 2$ (and with $\ell \in \mathbb{Z}_+$ being arbitrary). We assign the color red (denoted by squares) to all the vertices of $G'$.}
 \label{fig:example_IVlkA_and_IVlkV_complexity_construction}
 \end{figure}
 
 It is not difficult to see that $G'_{[\mathrm{IV} \ell \, | \, k \mathrm{A}]} = G'_{[\mathrm{IV} \ell \, | \, k \mathrm{V}]} = G$, thus the following are equivalent:
 \begin{itemize}[topsep=3pt, itemsep=-1.5pt]
  \item[--] $G$ has a clique of size at least $m$, \item[--] $G'$ has an $[\mathrm{IV} \ell \, | \, k \mathrm{A}]$-connected component of size at least $m$, and
  \item[--] $G'$ has an $[\mathrm{IV} \ell \, | \, k \mathrm{V}]$-connected component of size at least $m$.
 \end{itemize}
  Therefore, the problems \textsc{$[\mathrm{IV} \ell \, | \, k \mathrm{A}]$-} and \textsc{$[\mathrm{IV} \ell \, | \, k \mathrm{V}]$-ConnectedComponents} are indeed NP-complete.
\end{proof}

Now we prove that given a vertex-colored digraph and a positive integer $m$, the problems of deciding if the digraph has a maximal $[\mathrm{IV} \ell \, | \, k \mathrm{A}]$- or $[\mathrm{IV} \ell \, | \, k \mathrm{V}]$-connected subgraph of size at least $m$ are both NP-complete -- even for undirected graphs in which every vertex has the same color.

\begin{proof}[Proof of Theorem~\ref{thm:complexity_of_IVlkA_and_IVlkV_2}] \label{proof:complexity_of_IVlkA_and_IVlkV_2}
 Let $k, \ell \in \mathbb{Z}_+$ be fixed. Obviously, these problems are in NP.
 
 Now we show that these problems are NP-hard by reducing the NP-complete problem \textsc{Clique}~\cite{karp1972reducibility} to them. Let $G$ be an arbitrary undirected graph. Let $G'$ be the undirected graph constructed in the proof of Theorem~\ref{thm:complexity_of_IVlkA_and_IVlkV}, i.e., $G'$ is obtained from $G$ by replacing each of its edges with $k$ parallel edges.
 Let $C = \{ c \}$ and let $f$ be the function that assigns the color $c$ to each vertex of $G'$. It is not difficult to see that the following are equivalent for any $W \subseteq V(G) = V(G')$ with $|W| \ge 2$:
 \begin{itemize}[topsep=3pt, itemsep=-1.5pt]
  \item[--] $G[W]$ is a clique,
  \item[--] the subgraph $G'[W]$ is $[\mathrm{IV} \ell \, | \, k \mathrm{A}]$-connected,
  \item[--] the subgraph $G'[W]$ is $[\mathrm{IV} \ell \, | \, k \mathrm{V}]$-connected.
 \end{itemize}
  
 Therefore, the problems \textsc{Maximal-$[\mathrm{IV} \ell \, | \, k \mathrm{A}]$-} and \textsc{Maximal-$[\mathrm{IV} \ell \, | \, k \mathrm{V}]$-ConnectedSubgraphs} are indeed NP-complete.
\end{proof}
\end{document}